\newtheorem{theo}{Theorem}
\numberwithin{theo}{section}
\newtheorem{defi}[theo]{Definition}
\newtheorem{lemm}[theo]{Lemma}
\newtheorem{prop}[theo]{Proposition}
\newtheorem{cor}[theo]{Corollary}
\newtheorem{bem}[theo]{Remark}
\theoremstyle{definition}
\newcommand{\supp} {\text{supp }}
\newcommand{\diag}{{\text{\rm diag}}}
\newcommand{\argmin}{\mbox{argmin}}
\DeclareMathOperator{\sgn}{\rm sgn}
\newcommand{\N}{\mathbb{N}}
\newcommand{\dimension}{\N\backslash\{1\}}
\newcommand{\Z}{\mathbb{Z}}
\newcommand{\R}{\mathbb{R}}
\newcommand{\C}{\mathbb{C}}
\newcommand{\sph}{\mathbb{S}}
\newcommand{\cE}{{\cal E}}
\newcommand{\angles}[1]{\langle #1\rangle}
\newcommand{\sprod}[1]{\langle #1\rangle}
\newcommand{\abs}[1]{\left\vert #1 \right\vert}
\newcommand{\norm}[1]{\left\Vert #1 \right\Vert}
\newcommand{\bimapnorm}[1]{{ \left\vert\kern-0.25ex\left\vert\kern-0.25ex\left\vert #1 \right\vert\kern-0.25ex\right\vert\kern-0.25ex\right\vert }}
\newcommand{\otnorm}[1]{\abs{#1}_{[d-1]}}
\newcommand{\set}[1]{\left\lbrace #1\right\rbrace}
\newcommand{\abspi}[1]{\abs{\set{#1}}}
\newcommand{\afunc}{a_{\lambda}}
\newcommand{\bfunc}{b_{\mu}}
\newcommand{\gfunc}{g_{\lambda}}
\newcommand{\afuncfor}{\hat{a}_{\lambda}}
\newcommand{\bfuncfor}{\hat{b}_{\mu}}
\newcommand{\gfuncfor}{\hat{g}_{\lambda}}
\newcommand{\sigmafuncfor}{\hat{\gamma}_{j,\ell,k}}
\newcommand{\matricol}[1]{A^{-1}_{\alpha,s_{#1}}R_{\theta_{#1}} R_{\varphi_{#1}}}
\newcommand{\rotmatr}[1]{R_{\theta_{#1}} R_{\varphi_{#1}}}
\newcommand{\rotmatrT}[1]{R_{\varphi_{#1}}^T R_{\theta_{#1}}^T}
\newcommand{\wfuncrad}[4]{\frac{\min\left\{1, s_{#1}^{-1}(1+|\xi|)\right\}^{#2}}{\left( 1+ s_{#1}^{-1}|\xi|\right)^{#3}(1+s_{#1}^{-\alpha}\otnorm{R_{\theta_{#1}}R_{\varphi_{#1}}\xi})^{#4}}}
\newcommand{\Sfunc}[4]{\frac{\min\left\{1, s_{#1}^{-1}(1+|\xi|)\right\}^{#2} }{(1 +s_\lambda^{-1}|\xi|)^{#3}(1+s_{#1}^{1-\alpha}\otnorm{R_{\theta_{#1}} R_{\varphi_{#1}}(\xi/|\xi|)})^{#4}}}
\newcommand{\SfuncRad}[3]{\frac{\min\left\{1, s_{#1}^{-1}(1+r)\right\}^{#2} }{(1 +s_{#1}^{-1}r)^{#3}}}
\begin{document}

\title{Multivariate $\alpha$-Molecules}

\author{Axel Flinth \and Martin Sch\"{a}fer \thanks{ Both authors are affiliated to the Institut f\"ur Mathematik, Technische Universit\"at Berlin. \newline 
\emph{E-mail addresses:} flinth@math.tu-berlin.de, schafer@math-tu.berlin.de}}

\maketitle

\begin{abstract}

The suboptimal performance of wavelets with regard to the approximation of multivariate data
gave rise to new representation systems, specifically designed for data with anisotropic features.
Some prominent examples of these are given by ridgelets, curvelets, and shearlets, to name a few.

The great variety of such so-called directional systems motivated the search for a common framework, which unites many under one roof
and enables a simultaneous analysis, for example with respect to approximation properties.
Building on the concept of parabolic molecules, the recently introduced framework of $\alpha$-molecules does in fact include the previous mentioned systems.
Until now however it is confined to the bivariate setting, whereas
nowadays one often deals with higher dimensional data. This
motivates the extension of this unifying theory to dimensions larger than 2, put forward in this work.
In particular, we generalize the central result that the cross-Gramian of any two systems of $\alpha$-molecules will to some extent be localized.

As an exemplary application, we investigate the sparse approximation of video signals, which are instances of 3D data.
The multivariate theory allows us to derive almost optimal approximation rates for a large class of representation systems.

\emph{Keywords:} Wavelets, Shearlets, Anisotropic Scaling, $\alpha$-Molecules, Multiscale Analysis, Nonlinear Approximation.

\emph{2010 MSC:} 41A30, 41A63, 42C40
\end{abstract}

%------------------------------------------------------
\section{Introduction}\label{sec:intro}
%------------------------------------------------------

\footnote{The final publication has been published in Journal of Approximation Theory. It can be found via its DOI \href{http://www.dx.doi.org/doi:10.1016/j.jat.2015.10.004}{10.1016/j.jat.2015.10.004}. }One of the most influential modern developments in applied harmonic analysis has undeniably been the introduction of \emph{wavelets} \cite{Dau92}. Their construction is based on dilations and translations of
a (finite) set of generating functions $(g_\lambda)_\lambda\subseteq L^2(\R^d)$. By carefully choosing the generators, the resulting systems can become frames or even orthonormal bases of the space $L^2(\R^d)$.
Furthermore, additional properties can be obtained, such as e.g.\ smoothness or compact support. Some real-world applications of wavelets today are e.g.\ data compression (e.g.\ JPEG2000)
or restoration tasks in imaging sciences~\cite{CDOS12}. In the field of PDE's wavelets nowadays play a central role in solving elliptic equations~\cite{CDD01}.

The great success of wavelet systems -- besides their elegant construction principle and available fast numerical implementations -- rests upon the fact that
they provide efficient multiscale representations for various types of data. In particular, they optimally sparsely approximate
functions $f:\R^d\rightarrow\C$, which are smooth apart from (a finite number of) point singularities, in the sense of fast decay of the $N$-term approximation error. Since such singularities
are the only ones that occur in `reasonable' 1D data, we can safely say that wavelets are optimal for approximating one-dimensional functions.

Moving up a dimension however, the situation changes completely. Two-dimensional data may well have singularities along curves and
is often governed by such anisotropic features -- think of edges in images for instance. A widely used model for such data is the class of cartoon-like functions~\cite{Don01}, i.e.\
functions which are smooth except for a curve-like singularity (see Section \ref{sec:videodata} for details). For this class, wavelets do not perform optimally any more~\cite{Don01}, and hence other approaches have to be considered.

\subsection{Directional Representation Systems}

The reason for the non-optimal performance of wavelets in a multivariate setting is due to their isotropic scaling law, which is
not optimally suited for resolving anisotropic, i.e.\ directional, features.
Therefore, many systems employing some form of anisotropic scaling and thus better suited for this task have been considered in recent years. Subsequently, we briefly recall
a few of these constructions for motivation purposes, but by no means this shall be a complete overview.

\subsubsection{Ridgelets}

Aiming to approximate functions with line singularities, Candès defined so-called \emph{ridgelets}~\cite{Can98} as translated, rotated and dilated versions of a ridge function, which
is constant orthogonal to some specified direction $\eta \in \sph^{d-1}$. Since such functions are not square-integrable, the concept was adjusted by Donoho~\cite{CD99} allowing ridgelets
a slow decay orthogonal to the $\eta$-direction, leading to a modified notion adopted for instance in~\cite{GrohsRidLT, GKKSaSPIE2014, GKKScurve2014}. In the new sense, a system of
ridgelets is constructed performing rotations, translations and directional scaling on a generator $g\in  L^2(\R^d)$ with corresponding scaling matrix
	\begin{align*}
		A_{0,s} = \begin{pmatrix}
		s & 0 \\ 0 & 1
		\end{pmatrix}, \qquad s>0.
\end{align*}
Tight ridgelet frames of this type were constructed e.g.\ in \cite{GrohsRidLT, GKKScurve2014}.

\subsubsection{Curvelets and Shearlets}

A true breakthrough was achieved by Candès and Donoho in 2002 with the introduction of \emph{curvelets}~\cite{CD04},
the first system to provide a provably (almost) optimal approximation rate for a certain class
of cartoon-like functions.
Again, the idea is to apply certain rotation, translation and scaling operations to a generating function. The major novelty was the use of \emph{parabolic scaling},
a compromise between directional scaling used for ridgelets and isotropic scaling used for wavelets, described by a matrix of the form
\begin{align}\label{eq:paramat}
		A_{\frac{1}{2},s} = \begin{pmatrix}
		s & 0 \\ 0 & s^{1/2}
		\end{pmatrix}, \qquad s>0.
\end{align}
This type of scaling is specifically adapted to data with $C^2$-discontinuity curves, since it leaves the parabola invariant and produces functions
with essential support in a rectangle of size {\em `width $\approx$ length$^2$'}.
We mention that in the actual construction of the classical tight frame of curvelets~\cite{CD04}, the translations and rotations are applied to a set of generators,
related to each other by a parabolic scaling law realised not by \eqref{eq:paramat} but by dilations with respect to polar coordinates.

A few years after curvelets in 2005, \emph{shearlets} were developed mainly by Kutyniok, Labate, Lim, and Weiss~\cite{KuLaLiWe}.
They also scale parabolically and feature the same celebrated approximation properties as curvelets for cartoon-like functions~\cite{GKL05,Kutyniok2010}.
The main difference is that shearings and not rotations are used for the change of direction. The choice of shears makes shearlets more adapted to a digital grid, since shearings given by the matrices
	\begin{align}\label{eq:shearmat1}
		S_{h} = \begin{pmatrix}
		1 & 0 \\ h & 1
		\end{pmatrix}
\quad\text{and}\quad
S^T_{h} = \begin{pmatrix}
		1 & h \\ 0 & 1
		\end{pmatrix},\quad h\in\R,
	\end{align}
leave the digital grid invariant. This is favorable in a discrete setting and
bears the advantage of a unified treatment of the continuum and digital realm. It should be noted that
some actual constructions of shearlet systems are not entirely faithful to the original idea of applying shears, translations, and parabolic scalings to a single generator:
%\begin{align*}
%\added{\psi_{j,\ell,k}(x) = 2^{3j/4} \psi(S_\ell A_{1/2, 2^j}x -k).}
%\end{align*}
The probably most notable and widely used adjustment is the idea of \emph{cone-adaption}, where several generators with different orientations are used in order to avoid large shear parameters. We will discuss this strategy in greater detail later in the article.

Nowadays, shearlets are a widely used directional representation system with
applications ranging from imaging science~\cite{EL2012}, simulations of inverse scattering problems \cite{KMP14}
to solvers for transport equations \cite{DHKSW14}. For more information we refer to the book \cite{KL12}.

\subsubsection{$\alpha$-Curvelets (and -Shearlets)}

The systems we have presented so far all utilize different versions of the scaling matrix
\begin{align}\label{eq:alphamat1}
A_{\alpha,s}=\begin{pmatrix}
s & 0 \\ 0 & s^\alpha
\end{pmatrix}, \qquad s>0,
\end{align}
where the parameter $\alpha\in[0,1]$ specifies the degree of anisotropy in the scaling:
$\alpha=1$ corresponds to wavelets, $\alpha=\frac{1}{2}$  to curvelets and shearlets, and $\alpha=0$ to ridgelets.
This observation was used in \cite{GKKScurve2014} to define \emph{$\alpha$-curvelets}, and associated bandlimited tight frames were constructed for every $\alpha\in[0,1]$.
Similar to $\alpha$-curvelets, the notion of a shearlet can be generalized to comprise $\alpha$-scaling. The resulting
\emph{$\alpha$-shearlets} have been defined and examined in \cite{Kei13,Kutyniok2012} (for the range $\alpha\in[\frac{1}{2},1)$).

\subsection{A common Framework}

The directional systems described above are all constructed using the same idea: start with a set of generators, and then perform scalings (with some degree of anisotropicity), changes of direction (e.g. rotations or shears) and translations. Further, in order to obtain systems with desirable properties, some regularity condition on the generators has to be posed. Having this in mind, it seems
possible to regard all such systems as certain instances of a more general concept.

\subsubsection{Parabolic Molecules}

In 2011, Grohs and Kutyniok introduced the concept of \emph{parabolic molecules}~\cite{Grohs2011}, which allows
to derive classical curvelets and shearlets as special instances of the same general construction process.
Starting from a set of generators, a system of parabolic molecules is obtained via parabolic dilations, rotations, and translations.
The essential novelty is that the generators can, apart from a certain time-frequency localization, be chosen freely and
each function may have its own generator. Together with the utilization of so-called parametrizations to allow generic indexing,
the `variability' of the generators provides the flexibility to cast rotation- and shear-based systems as instances of one unifying construction principle.
Moreover, it becomes possible to relax the
vanishing moment conditions -- important for high approximation rates -- imposed on the generators.
Rather to demand rigid conditions as in most classical constructions,
it suffices to require the moments of the variable generators to vanish asymptotically at high scales, without changing the asymptotic approximation behavior.

\subsubsection{$\alpha$-Molecules}

The scope of parabolic molecules is limited to parabolically scaled systems, wherefore
a major generalization was pursued in~\cite{GKKS2014}, namely the extension to \emph{$\alpha$-molecules}. These
incorporate more general $\alpha$-scaling~\eqref{eq:alphamat1}
and can thus bridge the gap between wavelets and ridgelets, as well as curvelets and shearlets in between.

However, like the framework of parabolic molecules, they are confined to a 2-dimensional setting.
Since nowadays higher dimensional data plays an ever increasing role, an extension of the theory to higher dimensions is appreciable.
A first step in this direction was taken by one of the authors \cite{Flinth13} with an extension of the parabolic molecules framework to 3D. In this paper, we aim to generalize the framework to arbitrary dimensions $d \in \N$, $d \geq 2$ and  general scaling parameters $\alpha \in [0,1]$.

\subsubsection{Why $\alpha$-Molecules?}

The concept of (multivariate) $\alpha$-molecules covers a great variety of directional multiscale systems
and unifies their treatment and analysis, e.g.\ with respect to approximation properties.
The foundational result behind this is Theorem~\ref{thm:almorth}, i.e.\ the fact that the localization of the cross-Gramian
of two systems of $\alpha$-molecules -- in the sense of a strong off-diagonal decay -- merely
depends on their respective parametrizations and orders. Hence, the parametrization and the order
of a system of $\alpha$-molecules alone is sufficient information to determine the corresponding approximation behavior.
This is illustrated by Theorem~\ref{thm:videoapprox}, where a large class of directional representation systems is specified
with almost optimal approximation performance with respect to cartoon video data.
Finally, we remark that apart from the analysis aspects the framework also promises new design approaches for novel constructions.

\subsection{Our Contributions}
As mentioned before, the goal of this paper is to generalize the concept of $\alpha$-molecules to arbitrary dimensions. The multivariate formulation extends the earlier results from \cite{Grohs2011,GKKS2014,Flinth13} and gives valuable insights, e.g.\ on how they scale with the dimension.
One should emphasize that the extension beyond dimension 2 comes with several delicacies such as to determine a suitable definition of the so-called $\alpha$-scale index
distance. The technical effort to prove the mentioned results is considerably higher when  dealing with more than 2 dimensions, mainly because many arguments take place on the unit sphere instead of the unit circle.  It also gets significantly harder to prove that shearlets, i.e. the main example of multidimensional directional systems in dimensions higher than 2, can be included in the framework. Hence already the core results of the theoretical framework themselves do not generalize straightforwardly.

\subsection{Outline}

The paper is organized as follows.
The core part of the theory, in particular Theorem~\ref{thm:almorth},
is presented in Section \ref{sec:molecules}.
Since the corresponding proof is quite involved it is outsourced to Section~\ref{sec:proofmain}.
The abstract theory is further developed in Section~\ref{sec:sparseapprox} with a focus on approximation theory.
Here Theorem~\ref{thm:sparseeqivmol} derives
sufficient conditions for two systems of $\alpha$-molecules to be sparsity equivalent.
In Section~\ref{sec:videodata} we then exemplarily apply the theory to video data and identify a large class of representation systems
providing almost optimal sparse approximation in Theorem~\ref{thm:videoapprox}.
Finally, Section~\ref{sec:shearletmols} is devoted to a large class of concrete systems of $\alpha$-molecules, namely multivariate $\alpha$-shearlet molecules.
As specific examples we present pyramid-adapted shearlet systems, in particular those generated by compactly supported functions
and the smooth Parseval frame of band-limited shearlets by Guo and Labate. This frame is presented in greater detail in Subsection~\ref{sssec:ssh}, thereby fixing some inaccuracies
of the original definition.

\subsection{Notation}

The (strictly) positive real numbers are denoted by $\R_{+}$.
The vector space $\R^d$, where $d\in\N$, is equipped with the usual Euclidean scalar product denoted by $\sprod{\cdot, \cdot}$. $\N$ hereby denotes the set of positive integers, while $\N_0 = \N \cup \set{0}$.
For the unit sphere in $\R^d$ the symbol $\mathbb{S}^{d-1}$ is used. The standard unit vectors are given by $e_1,\ldots,e_d$ and for a vector $x\in\R^d$ we use the notation
$[x]_i:=\langle x,e_i \rangle$, $i\in\{1,\ldots,d\}$, for the $i$:th component.
Its $p$-(quasi-)norm in the range $0<p\le\infty$ is denoted by $|x|_p$. In case of the Euclidean norm $|x|_2=\sqrt{\langle x,x \rangle}$,
we will usually omit the subindex. We further define $\otnorm{x} := |([x]_1,\ldots,[x]_{d-1},0)^T|_2$. For a matrix $M \in \R^{m,n}$, we denote its operator norm as a mapping from the Euclidean $\R^n$ to the Euclidean $\R^m$ by $\norm{M}_{2 \to 2}$, and its entries by $[M]_i^j$, $i = 1, \dots, m$, $j=1, \dots n$.

The usual Lebesgue spaces on $\R^d$ are denoted by $L^p(\R^d)$, where $0<p\le\infty$. The corresponding sequence spaces are given by $\ell^p(\Lambda)$, where $\Lambda$ is a countable index set.
In both cases we use the symbol $\|\cdot\|_p$ for the associated (quasi-)norms. Further, the symbol $\langle\cdot,\cdot\rangle$ will also be used
for the inner products on the Hilbert spaces $L^2(\R^d)$ and $\ell^2(\Lambda)$.
For the weak versions of the sequence spaces we use the notation $\omega\ell^p(\Lambda)$ with associated (quasi-)norms $\|\cdot\|_{\omega\ell^p}$. For their definition we
refer to Subsection~\ref{ssec:sparseapprox}.

In addition, we need the following function spaces on $\R^d$: the space of
continuous functions $C(\R^d)$, the space of $n$-times continuously differentiable functions $C^n(\R^d)$ for $n\in\N\cup\{\infty\}$, as well as their respective restrictions
$C_c(\R^d)$ and $C^n_c(\R^d)$ to functions with compact support.

The Fourier transform $\hat{f}$ of a function $f$ in the space of Schwarz functions $\mathscr{S}(\R^d)$ is given by
\begin{align*}
	\hat{f}(\xi) = \int_{\R^d} f(x) \exp(-2\pi i \sprod{\xi, x})dx.
\end{align*}
As usual, it extends to the space of tempered distributions $\mathscr{S}^\prime(\R^d)$.

For two entities $x,y$, usually dependent on a certain set of parameters, the notation
`$x\lesssim y$' shall mean that $x\le Cy$ for some fixed constant $C>0$, which is independent of the involved parameters.
If both $x \lesssim y$ and $y \lesssim x$ we write `$x\asymp y$'.
We further need the ceiling function on $\R$ given by $\lceil x \rceil:=\min\{ \ell\in\Z:\ell\ge x\}$.
A useful abbreviation is also the ubiquitous `analyst's bracket' defined by $\angles{x} := \sqrt{1+x^2}$ for $x\in\R$.

%%%%%%%%%%%%%%%%%%%%%%%%%%%%%%%%%%%%%%%%%%%%%%%%%%%%%%%%%%%%%%%%%%%%%%%%%%%%%%%%%%%%%%%%%%%%%%%%%%%%%%%%%%%%%%%%%%%%%%

\section{$\alpha$-Molecules in $d$ Dimensions} \label{sec:molecules}

Recalling the definition in \cite{GKKS2014}, a system of bivariate $\alpha$-molecules consists of functions in $L^2(\R^2)$ obtained by applying $\alpha$-scaling,
rotations, and translations to a set of generating functions, which need to be sufficiently localized in time and frequency.
Due to this construction, every $\alpha$-molecule is naturally associated with a certain scale, orientation and spatial position,
which -- in the $2$-dimensional case -- is conveniently represented by a point in the corresponding parameter space
$\mathbb{P}_2=\R_{+} \times \mathbb{S}^{1}  \times \R^2$.

Aiming for a multivariate generalization, we thus first need a $d$-dimensional version of this parameter space.
We let $\mathbb{S}^{d-1}$ denote the unit sphere in $\R^d$ and put
\begin{align*}
\mathbb{P}_d= \R_{+} \times \mathbb{S}^{d-1}  \times \R^d.
\end{align*}
Each function $m_{\lambda}\in L^2(\R^d)$
of a system of $d$-dimensional $\alpha$-molecules $(m_{\lambda})_{\lambda\in \Lambda}$ shall by definition
then be associated with a unique point $(s_\lambda,e_\lambda,x_\lambda)\in\mathbb{P}_d$,
where the variable $s_\lambda\in\R_+$ shall represent its scale, the vector $e_\lambda\in\mathbb{S}^{d-1}$
its orientation in $\R^d$, and $x_\lambda\in\R^d$ the spatial location.
The relation between the index $\lambda$ of a molecule $m_\lambda$ and its position $(s_\lambda,e_\lambda,x_\lambda)$
in $\mathbb{P}_d$ is described by a so-called parametrization, analogue to \cite{GKKS2014}.

\begin{defi}
    A \emph{parametrization} consists of a
    pair $(\Lambda,\Phi_\Lambda)$,
    where $\Lambda$ is a discrete index set and $\Phi_\Lambda$ is a mapping
    $$
        \Phi_\Lambda:\left\{\begin{array}{ccc}\Lambda &\to & \mathbb{P}_d,\\
        \lambda \in\Lambda & \mapsto & \left(s_\lambda , e_\lambda , x_\lambda\right).
        \end{array}\right.
    $$
    which associates with each $\lambda\in \Lambda$ a \emph{scale} $s_\lambda\in\R_+$,
    a \emph{direction} $e_\lambda\in\mathbb{S}^{d-1}$, and a \emph{location} $x_\lambda\in\R^d$.
\end{defi}

For practical purposes it is more convenient to represent an orientation $\eta\in\mathbb{S}^{d-1}$ by a set of angles.
Therefore we define the rotation matrix $R_{\theta}$ for $\theta=(\theta_1,\ldots,\theta_{d-2}) \in \R^{d-2}$ by
\begin{align*}
	R_{\theta} = \begin{pmatrix}
		\cos(\theta_1) &   & -\sin(\theta_1)  \\
		                           & I_{d-2} & \\
		                           \sin(\theta_1) &  & \cos(\theta_1)
	\end{pmatrix} \cdot {\ldots} \cdot \begin{pmatrix}
		\cos(\theta_{d-2}) &   &-\sin(\theta_{d-2})  &     \\
		                           & 1 &  &   \\
		                           \sin(\theta_{d-2}) &   & \cos(\theta_{d-2})  &   \\
		                           & & & I_{d-3} 	
	\end{pmatrix},
\end{align*}
where $I_{d}$ for $d\in\N$ denotes the $d$-dimensional identity matrix.
Furthermore, we introduce for $\varphi\in\R$ the matrix
\begin{align*}
	  R_{\varphi} = \begin{pmatrix}
	\cos(\varphi) & \sin( \varphi) &\\
	-\sin( \varphi) & \cos(\varphi) &  \\
	&  & I_{d-2}
	\end{pmatrix}.
\end{align*}
Note that these definitions pose an inconsistency in the notation, since they depend on the particular naming of the index.
However, since we always use these particular indices, this will not lead to any problems while improving the readability significantly.

Each orientation $\eta\in \mathbb{S}^{d-1}$ can now be uniquely represented by a set of angles $(\theta_1,\ldots,\theta_{d-2},\varphi)\in  [0, \pi] \times [-\frac{\pi}{2},\frac{\pi}{2}]^{d-3} \times [0, 2\pi]$
via the relation
\begin{align}\label{eq:angleparamet}
\eta=R^T_\varphi R^T_\theta  e_d,
\end{align}
where $e_d$ is the $d$th unit vector of $\R^d$.
Explicitly, it is given by
\begin{align*}
	\eta(\theta,\varphi) = \begin{pmatrix} \eta_1(\theta,\varphi)  \\ \vdots \\ \eta_d(\theta,\varphi)  \end{pmatrix}
     = \begin{pmatrix}
	  \phantom{+}  \cos(\varphi) \cos(\theta_{d-2}) \cdots \cdots\cos(\theta_{2})\sin(\theta_1)  \\
	  \phantom{+}  \sin(\varphi) \cos(\theta_{d-2}) \cdots \cdots\cos(\theta_{2})\sin(\theta_1)  \\
	  - \sin(\theta_{d-2}) \cos(\theta_{d-3}) \cdots \cos(\theta_{2})\sin(\theta_{1}) \\
			\vdots\\
      - \sin(\theta_3)\cos(\theta_2)\sin(\theta_1) \\
      - \sin(\theta_2)\sin(\theta_1) \\
		\cos(\theta_1)
	\end{pmatrix}.
	\end{align*}
Next, we adapt the $\alpha$-scaling matrix~\eqref{eq:alphamat1} to the multivariate setting. For $\alpha\in[0,1]$, we set
\begin{align}\label{eq:alphamat2}
A_{\alpha,s}= \begin{pmatrix}
	  s^\alpha I_{d-1} &  \\
	     &  s
	\end{pmatrix}.
\end{align}
In case $\alpha=1$ this matrix scales $\R^d$ isotropically, in the range $\alpha\in[0,1)$ it scales uniformly in all directions except for
the $e_d$-direction.
Note that here we choose $e_d$ as the distinguished direction in which the scaling is stronger -- in contrast to the 2-dimensional case \cite{GKKS2014},
where $e_1$ was chosen.

After this preparation we are ready to give the definition of $d$-variate $\alpha$-molecules, $d\in\N, d \geq 2$,
which essentially reduces to the original definition from \cite{GKKS2014} for $d=2$, except for the interchanged roles of the directions $e_1$ and $e_d$.

\begin{defi}
Let $\alpha\in[0,1]$, $d\in\N$, $d \geq 2$ and $L,M,N_1,N_2\in\N_0\cup\{\infty\}$. Further let $(\Lambda,\Phi_\Lambda)$ be a parametrization with $\Phi_\Lambda(\lambda)=(s_\lambda,e_\lambda,x_\lambda)\in\mathbb{P}_d$
for $\lambda\in\Lambda$. The corresponding angles \eqref{eq:angleparamet} for $e_\lambda$ shall be denoted by $(\theta_\lambda,\varphi_\lambda)$.
A family of functions $(m_{\lambda})_{ \lambda \in \Lambda} \subseteq L^2(\R^d)$
is called a \emph{system of $d$-dimensional $\alpha$-molecules} of order $(L,M,N_1,N_2)$ with respect to the parametrization $(\Lambda,\Phi_\Lambda)$, if each $m_\lambda$ is of the form
\begin{align} \label{eq:molgen}
	m_{\lambda} =s_\lambda^{\frac{1+\alpha (d-1)}{2}} g_{\lambda}\big( A_{\alpha,s_{\lambda}}R_{\theta_{\lambda}} R_{\varphi_{\lambda}}(x-x_{\lambda})\big)
\end{align}
with generators $g_{\lambda}\in L^2(\R^d)$ satisfying for every multi-index $\rho\in\N_0^d$ with $|\rho|_1\leq L$ the condition
\begin{align}\label{eq:molcon}
	\abs{\partial^{\rho}\hat{g}_{\lambda}(\xi)} \lesssim \min\left( 1, s_\lambda^{-1} + \abs{[\xi]_d} + s_\lambda^{-(1-\alpha)} \otnorm{\xi}\right)^M \angles{\abs{\xi}}^{-N_1}\angles{\otnorm{\xi}}^{-N_2}.
\end{align}
The implicit constant in \eqref{eq:molcon} is required to be uniform in $\Lambda$.
In case that a control parameter takes the value $\infty$, this shall mean that the condition \eqref{eq:molcon}
is fulfilled with the respective quantity arbitrarily large.
\end{defi}

A system of $\alpha$-molecules is thus obtained by applying rotations, translations, and $\alpha$-scaling
to a set of generating functions $(g_{\lambda})_\lambda$, which are required to obey a prescribed time-frequency localization.
Every molecule $m_\lambda$ is thereby allowed to have its own individual generator $g_{\lambda}$.

The definition only poses conditions on the Fourier transform of the generators $g_{\lambda}$.
The number $L$ describes the spatial localization, $M$ the number of directional (almost)
vanishing moments, and $N_1,N_2$ the smoothness of an element $m_\lambda$.
Also note that the weighting function on the right hand side of \eqref{eq:molcon}
is symmetric with respect to rotations around the $e_d$-axis, as well as reflections along this axis.

Applying $A_{\alpha,s}$ with $\alpha<1$ and $s>1$ to the unit ball $B=\{ x\in\R^d:|x|\le1 \}$ stretches $B$ in the $e_d$-direction. This
results in plate-like support of the characteristic function $\chi_B(A_{\alpha,s}\cdot)$ for large $s\in\R_+$, with the `plate' lying in the plane
spanned by the vectors $\{e_1,\ldots,e_{d-1} \}$. Thus, at high scales $\alpha$-molecules can be thought of as plate-like objects in the spatial domain.
The approximate frequency support on the other hand is concentrated in a pair of opposite
cones in the direction of the respective orientation.

\begin{bem}
It may seem more natural to choose a rotation $R_\eta$ from $e_d$ to $\eta\in\mathbb{S}^{d-1}$ in the $(e_d,\eta)$-plane to adjust the orientation
in \eqref{eq:molgen}. Due to the symmetries of the weighting function of the generators,
this choice is however not necessary. Since it is easier to use fixed rotation planes,
we stick to this more pragmatic choice of rotation parameters.
\end{bem}

Let us conclude this paragraph with some comments on the use of the term `molecule'. In the theory of function spaces, `atoms'
originally refer to the basic building blocks of a function space. In the widest sense, an atomic decomposition of
a function space is a countable subset containing functions called `atoms', which allow to represent every function of the space
as a countable linear combination.

In the theory of atomic decomposition of so called \emph{Hardy spaces} $H^p$, an atom is defined as a function $a$ supported in some cube $Q$ possessing vanishing moments and satisfying some norm bound, e.g. $\norm{a}_2 \leq \abs{Q}^{\frac{1}{2}-\frac{1}{p}}$\cite{wilson1985}. This definition has also been adapted in a slightly less rigid form in other parts of mathematics. Here, the term ''atom'' often simply refers to a function having compact support and possessing many vanishing moments. Typically, these atoms also possess additional properties, e.g.\ they may be bounded or fulfill smoothness conditions.

Furthermore, in many instances, the atoms are also related to each other.
Classical coorbit spaces e.g.\ possess atomic decompositions with atoms obtained from the action of a group on a single generator~\cite{FeGr89a}.
A primary example are the (homogeneous) Besov-Triebel-Lizorkin spaces with (compactly supported) wavelets as atoms.

%in the sense that they are obtained from each other by applying group transformations.
%Functions that have many vanishing moments and exhibits rapid decay are then called `molecules'.

A system of $\alpha$-molecules somewhat resembles this structure, whereas in a relaxed form.
The compact support condition is replaced with a less restrictive decay condition, and the moments are only asymptotically vanishing.
In addition, the molecules are related to each other via certain group transformations, namely translations, rotations, and dilations.
However, this relation shall not be understood in a strict sense, since the generators are allowed to vary to some extent. They just need to fulfill
a uniform localization condition.
For the functions of a system featuring these kind of soft conditions the term `molecules' was coined and used e.g.\ in \cite{CD02,GL08,Grohs2011,GKKS2014}.

\subsection{Index Distance in $d$ Dimensions}
\label{ssec:index}

A central ingredient of the theory of bivariate $\alpha$-molecules \cite{GKKS2014} is the fact that the parameter space $\mathbb{P}_2$ can be equipped with a natural
(pseudo-)metric with the property, that the distance between two points in $\mathbb{P}_2$ `anti-correlates' with the size of the scalar products of
$\alpha$-molecules associated with those points:
The greater the distance between two indices$\lambda, \lambda' \in \mathbb{P}_2$,
the smaller the scalar product of the corresponding $\alpha$-molecules $m_\lambda, m_{\lambda'} \in L^2(\R^d)$.

Our next aim is to find a suitable analogon of this (pseudo-)metric for the parameter space $\mathbb{P}_d$.
As for $\mathbb{P}_2$, the distance between two points $(s_\lambda,e_\lambda,x_\lambda),\,(s_\mu,e_\mu,x_\mu)\in\mathbb{P}_d$
must certainly take into account their spatial, scale, and orientational relation.
The spatial distance is measured by a rightly balanced combination of an isotropic term
$|x_\lambda-x_\mu|^2$ and a non-isotropic component $|\langle e_\lambda,x_\lambda-x_\mu \rangle|$, which depends on the orientation of the molecules.
For the distance between the orientations $e_\lambda$ and $e_\mu$, it seems natural to
consider the angle $d_{\sph}(e_\lambda, e_\mu)=\arccos( \sprod{e_\lambda, e_\mu})$ with $d_{\sph}(e_\lambda, e_\mu)\in [0,\pi]$.
Due to the symmetries of the weighting function in \eqref{eq:molcon} however,
the angle $d_{\sph}(e_\lambda, e_\mu)$ is projected onto the interval $[-\pi/2, \pi/2)$, with
the projected angle $\{d_\sph(e_\lambda, e_\mu)\}$ being the unique element of the set $\set{d_{\sph}(e_\lambda, e_\mu) + n \pi \ \vert \ n \in \Z}$ in the interval $[-\pi/2, \pi/2)$.
A suitable measure for the orientational distance is then $|\{d_{\sph}(e_\lambda, e_\mu)\}|^2$.
This definition is in fact consistent with the one in \cite{GKKS2014}, since in two dimensions we have $|\{d_{\sph}(e_\lambda, e_\mu)\}|= |\{\varphi_\lambda - \varphi_\mu\}|$.
Finally, the distance between different scales $s_\lambda,\,s_\mu>0$ is measured by the ratio $\max\left\{ s_\lambda/s_\mu, s_\mu/s_\lambda\right\}$.

Altogether, this leads to the following definition. It directly generalizes the
metric introduced in \cite{GV2014}, which is a simplified version of the original metric from \cite{GKKS2014}.

\begin{defi}
Let $\alpha\in[0,1]$. For given parametrizations $(\Lambda,\Phi_\Lambda)$ and $(\Delta,\Phi_\Delta)$ with
$(s_\lambda,e_\lambda,x_\lambda)=\Phi_\Lambda(\lambda)$ and $(s_\mu,e_\mu,x_\mu)=\Phi_\Delta(\mu)$, the
\emph{$\alpha$-scaled index distance} $\omega_\alpha:\Lambda\times\Delta\rightarrow[1,\infty)$ is defined by
\begin{align*}
	\omega_{\alpha}( \lambda, \mu ) = \max\left\{ \frac{s_\lambda}{s_\mu}, \frac{s_\mu}{s_\lambda}\right\} \left( 1 + d_\alpha(\lambda, \mu) \right),\qquad \lambda\in\Lambda,\,\mu\in\Delta,
\end{align*}
where with $s_0 = \min\{s_{\lambda},s_{\mu}\}$
\begin{align*}
	d_\alpha(\lambda, \mu) = s_0^{2\alpha} \abs{x_\lambda - x_\mu}^2 + s_0^{2(1-\alpha)} \abspi{d_\sph(e_\lambda, e_\mu)}^2  +  s_0 \abs{\sprod{e_{\lambda}, x_{\lambda} - x_{\mu}}}.
\end{align*}
\end{defi}

Let $(\Lambda,\Phi_\Lambda)$ be a parametrization. Then the induced index distance on $\Lambda$ is pseudosymmetric and satisfies a pseudo triangle inequality.
More precisely, it has the following properties:
\begin{itemize}
\item[(i)] $\omega_\alpha(\lambda,\lambda)=1$ for all $\lambda\in\Lambda$,
\item[(ii)] $\omega_\alpha(\lambda,\lambda^\prime)\asymp \omega_\alpha(\lambda^\prime,\lambda)$ for all $\lambda,\lambda^\prime\in\Lambda$,
\item[(iii)] $\omega_\alpha(\lambda,\lambda^\prime)\lesssim \omega_\alpha(\lambda,\lambda^{\prime\prime})\omega_\alpha(\lambda^{\prime\prime},\lambda^{\prime})$ for all $\lambda,\lambda^\prime,\lambda^{\prime\prime}\in\Lambda$.
\end{itemize}
Hence the function $\omega_\alpha$ can be viewed as a kind of multiplicative pseudo-metric.
A proof of these properties for the 2-dimensional case can be found in \cite{GV2014}, which translates very well to higher dimensions.

Now we are in a position to formulate the main theorem of this paper. It states that the index distance -- in a certain sense --
measures the size of the scalar products of $\alpha$-molecules.

\begin{theo} \label{thm:almorth}
Let $\alpha\in[0,1]$, $d\in\N$, $d\geq 2$, and let $(m_\lambda)_{\lambda\in\Lambda}$ and $(p_\mu)_{\mu\in\Delta}$ be two systems of $d$-dimensional $\alpha$-molecules of order $(L,M,N_1,N_2)$ with respect to parametrizations $(\Lambda,\Phi_\Lambda)$ and $(\Delta,\Phi_\Delta)$, respectively.
Further assume that there exists some constant $c>0$ such that
\[
s_\lambda\ge c \quad\text{and}\quad s_\mu\ge c \quad\text{for all }\lambda\in\Lambda,\mu\in\Delta \text{ with } (s_\lambda,e_\lambda,x_\lambda)=\Phi_\Lambda(\lambda),
(s_\mu,e_\mu,x_\mu)=\Phi_\Delta(\mu).
\]
If $N_1>\frac{d}{2}$ and if there exists some positive integer $N\in\N$ such that
\begin{align*}
 L \geq 2N, \quad  M >3N -d  + \frac{1+\alpha(d-1)}{2}, \quad  N_1 \geq N +\frac{1+\alpha(d-1)}{2}, \quad N_2 \geq 2N + d-2,
\end{align*}
then we have
\begin{align*}
	\abs{\sprod{m_\lambda, p_{\mu}}} \lesssim \omega_{\alpha}(\lambda, \mu)^{-N}.
\end{align*}
\end{theo}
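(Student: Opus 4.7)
The plan is to express $\langle m_\lambda, p_\mu\rangle$ as a frequency-side oscillatory integral via Plancherel, exploit the oscillation through integration by parts to capture the spatial components of the index distance, and bound the remaining non-oscillatory weight integral to capture the scale and angular components. By the pseudo-symmetry of $\omega_\alpha$ I may assume $s_\lambda\le s_\mu$, so $s_0=s_\lambda$. A direct computation yields $\hat m_\lambda(\xi)=s_\lambda^{-(1+\alpha(d-1))/2}\hat g_\lambda(A_{\alpha,s_\lambda}^{-1}R_{\theta_\lambda}R_{\varphi_\lambda}\xi)e^{-2\pi i\sprod{x_\lambda,\xi}}$, and analogously for $\hat p_\mu$. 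The substitution $\eta=A_{\alpha,s_\lambda}^{-1}R_{\theta_\lambda}R_{\varphi_\lambda}\xi$ reduces the inner product, up to a constant prefactor, to
\[
\int_{\R^d}\hat g_\lambda(\eta)\,\overline{\hat h_\mu(T\eta)}\,e^{-2\pi i\sprod{v,\eta}}\,d\eta,
\]
with $T=A_{\alpha,s_\mu}^{-1}R_{\theta_\mu}R_{\varphi_\mu}R_{\varphi_\lambda}^TR_{\theta_\lambda}^TA_{\alpha,s_\lambda}$, $v=A_{\alpha,s_\lambda}R_{\theta_\lambda}R_{\varphi_\lambda}(x_\lambda-x_\mu)$, and $\hat h_\mu$ the generator of $p_\mu$. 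A short calculation gives $[v]_d=s_\lambda\sprod{e_\lambda,x_\lambda-x_\mu}$ and $|v|^2=s_\lambda^{2\alpha}|x_\lambda-x_\mu|^2+(s_\lambda^2-s_\lambda^{2\alpha})\sprod{e_\lambda,x_\lambda-x_\mu}^2$, from which the elementary bound $2sy\le1+s^2y^2$ together with the lower bound $s_\lambda\ge c$ yields $1+|v|^2\gtrsim 1+s_\lambda^{2\alpha}|x_\lambda-x_\mu|^2+s_\lambda\abs{\sprod{e_\lambda,x_\lambda-x_\mu}}$.

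To extract the full spatial decay I apply integration by parts using $(I-(4\pi^2)^{-1}\Delta_\eta)^Ne^{-2\pi i\sprod{v,\eta}}=(1+|v|^2)^Ne^{-2\pi i\sprod{v,\eta}}$. This produces the prefactor $(1+|v|^2)^{-N}$, which by the preceding estimate dominates $(1+d_\alpha(\lambda,\mu))^{-N}$ except for the angular piece. The $2N$ derivatives landing on $\hat g_\lambda(\eta)\overline{\hat h_\mu(T\eta)}$ are absorbed via \eqref{eq:molcon}, whose uniform validity for all multi-indices of order $\le 2N$ is guaranteed by $L\ge2N$; the chain-rule factors arising from $T$ either have operator norm $\le1$ on the relevant subblocks or are compensated by the extra decay of $\hat h_\mu$ at $T\eta$ when $|T\eta|$ is large.

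It then remains to bound $\int_{\R^d}W_\lambda(\eta)\,W_\mu(T\eta)\,d\eta$, where $W_\lambda,W_\mu$ denote the right-hand side of \eqref{eq:molcon}. I partition $\R^d$ into the regimes $\{|\eta|\le1\}$, $\{1<|\eta|\le s_\mu\}$, and $\{|\eta|>s_\mu\}$. In the low-frequency regime the almost-vanishing moments factor (power $M$) on $\hat h_\mu$ gives the scale-ratio decay $(s_\mu/s_\lambda)^{-N}$ — and the threshold $M>3N-d+(1+\alpha(d-1))/2$ in the hypothesis is exactly what is needed to absorb the dimension and normalization losses. The tail $|\eta|>s_\mu$ is controlled by the isotropic smoothness $N_1>d/2$. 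In the intermediate regime the two transverse weights $\langle\otnorm{\cdot}\rangle^{-N_2}$ combine: one is centered on the $e_d$-axis of the $\lambda$-frame and the other on its image under $R_{\theta_\mu}R_{\varphi_\mu}R_{\varphi_\lambda}^TR_{\theta_\lambda}^T$, and their simultaneous support is a narrow tube whose aperture is controlled by $\abspi{d_\sph(e_\lambda,e_\mu)}$, yielding the missing factor $(1+s_\lambda^{2(1-\alpha)}\abspi{d_\sph(e_\lambda,e_\mu)}^2)^{-N}$.

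The main obstacle is precisely this last angular step. In dimension two the two reference directions differ by a single scalar angle $\varphi_\lambda-\varphi_\mu$, and the angular integral becomes one-dimensional. In dimensions $d\ge3$, the composed rotation above is an arbitrary element of $SO(d)$, and one must quantitatively lower bound $\otnorm{R_{\theta_\mu}R_{\varphi_\mu}R_{\varphi_\lambda}^TR_{\theta_\lambda}^T\eta/|\eta|}$ in terms of the intrinsic spherical distance $\abspi{d_\sph(e_\lambda,e_\mu)}$, uniformly over $\eta$ in the essential frequency support of $\hat g_\lambda$. This calls for a cover of $\sph^{d-1}$ by patches on which the angular coordinates $(\theta,\varphi)$ act as Lipschitz charts, together with a geometric statement to the effect that a rotation cannot move a narrow cone around $e_d$ close to itself unless its total rotation angle is comparably small — both facts being subtle because the angular chart degenerates at the poles.
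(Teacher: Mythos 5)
The core structural moves — Plancherel, integration by parts, separation of radial and angular parts, and recovering the spatial components of $d_\alpha$ from the phase — do track the paper. However, your choice of integration-by-parts operator introduces a gap that the paper's construction is specifically designed to avoid, and I don't see how to close it along your route.

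After your substitution $\eta = A_{\alpha,s_\lambda}^{-1}R_{\theta_\lambda}R_{\varphi_\lambda}\xi$ the integrand becomes $\hat g_\lambda(\eta)\overline{\hat h_\mu(T\eta)}\,e^{-2\pi i\langle v,\eta\rangle}$ with $T = A_{\alpha,s_\mu}^{-1}R\,A_{\alpha,s_\lambda}$, $R = R_{\theta_\mu}R_{\varphi_\mu}R_{\varphi_\lambda}^TR_{\theta_\lambda}^T$. Applying $\big(I-(4\pi^2)^{-1}\Delta_\eta\big)^N$ does indeed produce the prefactor $(1+|v|^2)^{-N}$, and your computation that $1+|v|^2 \gtrsim 1+s_\lambda^{2\alpha}|\Delta x|^2+s_\lambda|\langle e_\lambda,\Delta x\rangle|$ is correct. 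But the derivatives that land on $\hat h_\mu(T\eta)$ pick up chain-rule coefficients that are entries of $T$. The bad column is $Te_d = s_\lambda A_{\alpha,s_\mu}^{-1}Re_d$; since $|Re_d|_{[d-1]} = \sin\big(d_\sph(e_\lambda,e_\mu)\big)$ (the $d$:th column of $R$ is $R_{\theta_\mu}R_{\varphi_\mu}e_\lambda$), one finds $|Te_d|\lesssim 1+s_0^{1-\alpha}\sin\big(d_\sph(e_\lambda,e_\mu)\big)$ when $s_\lambda\le s_\mu$. Up to $2N$ derivatives in the $\eta_d$-direction can therefore produce a factor of size $\big(1+s_0^{1-\alpha}\sin(d_\sph)\big)^{2N}$, which is unbounded. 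Your phrase ``compensated by the extra decay of $\hat h_\mu$ at $T\eta$'' does not help: condition \eqref{eq:molcon} gives the \emph{same} weight bound for every derivative up to order $L$, so the transverse derivatives are not any smaller. And the only source of angular decay remaining — the $\langle\otnorm{\cdot}\rangle^{-N_2}$ weights — gives exponent at most $N_2-(d-2)=2N$ after the angular integration, which merely cancels the chain-rule blow-up and leaves nothing for the $\big(1+s_0^{2(1-\alpha)}\abspi{d_\sph(e_\lambda,e_\mu)}^2\big)^{-N}$ factor you still need. So the hypotheses of the theorem are too weak to carry your version of the argument.

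The paper sidesteps this by \emph{not} substituting and by using the anisotropic operator
\[
\mathscr{L}_{\lambda,\mu} = \mathcal{I} - s_0^{2\alpha}\Delta_\xi - \frac{s_0^2}{1+s_0^{2(1-\alpha)}\abspi{d_\sph(e_\lambda,e_\mu)}^2}\,\langle e_\lambda,\nabla_\xi\rangle^2,
\]
whose scale weight $s_0^{2\alpha}$ and whose angular weight in front of $\langle e_\lambda,\nabla_\xi\rangle^2$ are calibrated precisely so that every chain-rule coefficient that appears is bounded uniformly (this is the content of Lemma~\ref{Lwirkung}). The price is a weaker spatial prefactor, namely $\big(1+s_0^{2\alpha}|\Delta x|^2 + s_0^2\langle e_\lambda,\Delta x\rangle^2/(1+s_0^{2(1-\alpha)}\abspi{d_\sph}^2)\big)^{-N}$ rather than $(1+|v|^2)^{-N}$; the last AM--GM step of the proof then trades back the angular factor obtained from the weight integral to reconstitute the full $1+d_\alpha(\lambda,\mu)$. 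In other words, the design of $\mathscr{L}_{\lambda,\mu}$ is the idea your proposal is missing, and replacing it by the isotropic Laplacian in the $\eta$-coordinate over-extracts spatial decay and pays for it with uncontrollable chain-rule factors.

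Separately, your treatment of the angular integral is left as a sketch (a cover of $\sph^{d-1}$ by Lipschitz charts and a geometric ``narrow cone'' statement). The paper's route is more elementary: it reduces the spherical integral to a one-dimensional integral over the polar angle via a lower bound $|R\eta|_{[d-1]}\ge\min\{|\sin(d_\sph(\eta,e_d)+\theta_0)|,|\sin(d_\sph(\eta,e_d)-\theta_0)|\}$ (Lemma~\ref{lem:rotest}) followed by Corollary~\ref{cor:intSphereEstimate}; no covering of the sphere is required and the pole degeneracy you worry about does not arise because the $|\sin|$ substitution handles it automatically. If you keep your framework, you would still need to carry this part out, but it is subordinate to the more fundamental problem with the integration-by-parts operator described above.
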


The proof of Theorem~\ref{thm:almorth} is very long and technical and for this reason not presented here but in Section~\ref{sec:proofmain}.
Let us instead discuss the significance of this result. It states that -- with appropriate assumptions on the parametrizations -- the cross-Gramian of two systems of $\alpha$-molecules
is well-localized, in the sense of a fast off-diagonal decay
with respect to the index distance $\omega_\alpha$.
Put differently, the matrix is close to a diagonal matrix and the corresponding systems are almost orthogonal to each other.
This property has many implications, see for instance~\cite{GV2014,Grohs2012}. Its significance with respect to sparsity equivalence is elaborated in the next section. \newline

%%%%%%%%%%%%%%%%%%%%%%%%%%%%%%%%%%%%%%%%%%%%%%%%%%%%%%%%%%%%%%%%%%%%%%%%%%%%%%%%%%%%%%%%%%%%%%%%%

%------------------------------------------------------%
\section{Sparse Approximation with $\alpha$-Molecules} \label{sec:sparseapprox}
%------------------------------------------------------%

Based on Theorem~\ref{thm:almorth} it is possible to develop a general methodology to categorize frames of $\alpha$-molecules according to their sparse approximation behavior.
A central concept in this context is the notion of sparsity equivalence.

%Given a frame of $\alpha$-molecules, naturally the question of associated smoothness spaces arises. 
Another question that naturally arises in this context is if it is possible develop a theory of smoothness spaces associated with frames of $\alpha$-molecules. For such an investigation, coorbit space theory
provides an appropriate abstract framework. Usually however, due to the lack of group structure, this question
can not be handled within the classical theory developed in \cite{FeGr89a,FeGr86,FeGr89b}.
In subsequent contributions, among others \cite{dastte04,dastte04-1,daforastte08}, the classical theory has seen significant extensions beyond the group setting.
In fact, it is possible to base the theory solely on the notion of an abstract continuous frame, see \cite{fora05,RaUl10,Sch12,KemSchUl15}. In this general setup
coorbits associated to frames of $\alpha$-molecules can be defined and investigated.

Up to now, this has only been carried out to some extent and not yet systematically, i.e., for certain special instances of $\alpha$-molecules. %An exception
A particular example are cone-adapted bivariate shearlet systems \cite{labate2013shearlet}.
The authors believe that the development of a theory of $\alpha$-molecule smoothness spaces is beyond the scope of this paper, but certainly a very interesting possibility for future research. In this paper, we concentrate on sparse approximations.

%-----------------------------------------------------
\subsection{Sparse Approximation and Sparsity Equivalence}\label{ssec:sparseapprox}
%-----------------------------------------------------

Let us briefly recall some aspects of approximation theory in a general (separable) Hilbert space $(\mathcal{H},\langle\cdot,\cdot\rangle)$.
Utilizing a system $(m_\lambda)_{\lambda\in \Lambda}\subseteq \mathcal{H}$, a signal $f\in \mathcal{H}$ can be represented by the coefficients $c_\lambda\in\C$
 of the expansion
 \begin{align}\label{eq:expansion}
     f = \sum_{\lambda\in \Lambda} c_\lambda m_\lambda.
 \end{align}

 Suitable representation systems are provided e.g.\ by so-called frame systems \cite{Christensen2003a}, which
 ensure stable measurement of the coefficients and also stable reconstruction.
 A system $(m_\lambda)_{\lambda\in\Lambda}$ in $\mathcal{H}$ forms a \emph{frame} if there exist constants $A,B>0$,
 called the \emph{frame bounds}, such that
 \[
 A\|f\|^2 \le \sum_{\lambda\in\Lambda} |\langle f,m_\lambda \rangle|^2 \le B\|f\|^2 \text{ for all }f\in \mathcal{H}.
 \]
 If $A$ and $B$ can be chosen equal, the frame is called \emph{tight}. In case $A=B=1$, one speaks of a \emph{Parseval frame}.
 The associated frame operator $S:\mathcal{H}\rightarrow \mathcal{H}$ is given by $Sf=\sum_{\lambda\in\Lambda} \langle f,m_\lambda \rangle m_\lambda$.

 Since $S$ is always invertible, the system $(S^{-1}m_\lambda)_\lambda$ is also a frame, referred to as the \emph{canonical dual frame}.
 It can be used to compute a particular sequence of coefficients in the expansion \eqref{eq:expansion} via
 \[
 c_\lambda=\langle f,S^{-1}m_\lambda \rangle, \quad \lambda\in\Lambda.
 \]
 This sequence however is usually not the only one possible. Unlike the expansion in a basis, a representation with respect to a frame
 need not be unique. The canonical dual frame can also be used to express $f$ in terms
 of the {\em frame coefficients} $(\langle f,m_\lambda \rangle)_\lambda$ by
 \[
 f = \sum_{\lambda\in \Lambda} \langle f,m_\lambda \rangle S^{-1} m_\lambda.
 \]
 In general, any system $(\tilde{m}_\lambda)_{\lambda\in \Lambda}$ satisfying this reconstruction formula is called an associated {\em dual frame}.

Let us turn to the question of efficient encoding. In practice we have to restrict to finite expansions \eqref{eq:expansion}, which usually leads to an approximation error.
Given a positive integer $N$, the best $N$-term approximation $f_N$ of some element $f\in\mathcal{H}$
with respect to the system $(m_\lambda)_\lambda$ is defined by
$$
    f_N = \argmin \Big\Vert f -  \sum_{\lambda \in \Lambda_N}c_\lambda m_\lambda \Big\Vert^2 \quad\mbox{s.t.}\quad \#\Lambda_N \le N.
$$
For efficient approximation, it is desirable to find representation systems which provide good sparse approximation for the considered data, in the sense that
the approximation error $\|f-f_N\|$ decays quickly for $N\rightarrow\infty$.
Typically one wants to approximate signals in some subclass $\mathcal{C}\subseteq \mathcal{H}$. The approximation performance of a system with respect to such a class
is then usually judged by the worst-case scenario, i.e.\ the worst possible decay rate of the error $\|f-f_N\|$ for $f\in\mathcal{C}$.
In this sense a system $(m_\lambda)_\lambda$ provides {\em optimally
sparse approximations} with respect to $\mathcal{C}$, if its worst-case approximation rates are the best among all systems.

It is common to consider not the best $N$-term approximation but
the $N$-term approximation, obtained by keeping the $N$ largest coefficients.
This approximation is better understood and provides a bound for the best $N$-term approximation error.
We will also denote it by $f_N$, since the context will always make the meaning clear.

The $N$-term approximation rate achieved by a frame is closely related to the decay of the corresponding frame coefficients,
often measured by a strong or weak $\ell^p$-(quasi-)norm with $p>0$.
The weak $\ell^p$-(quasi-)norm is defined by
\[
\|(c_\lambda)_\lambda\|_{\omega\ell^p}:= \Big( \sup_{\varepsilon>0} \varepsilon^p \cdot \#\{\lambda: |c_\lambda|>\varepsilon \}\Big)^{1/p},
\]
and by definition a sequence $(c_\lambda)_\lambda \in \omega \ell^p$ if $\norm{(c_\lambda)_\lambda}_{\omega\ell^p}<\infty$. Since
$\norm{(c_\lambda)_\lambda}_{\omega\ell^p}\le \norm{(c_\lambda)_\lambda}_{p}$ for every sequence $(c_\lambda)_\lambda$, we have the embedding 
$\ell^p\hookrightarrow \omega\ell^p$. Note also that every non-increasing rearrangement $(c^*_n)_{n\in\N}$ of a sequence $(c_\lambda)_\lambda\in\omega\ell^p$ satisfies
\begin{align*}
\sup_{n>0} n^{1/p}|c^\ast_n| = \|(c_\lambda)_\lambda\|_{\omega\ell^p}.
\end{align*}

The well-known result below (see \cite{Devore1998}), whose proof can be found e.g.\ in \cite{GKKS2014}, shows that
membership of the expansion coefficients in an $\ell^p$ space for small $p>0$ implies good $N$-term approximation
rates.

 \begin{lemm}[{\cite[Lemma 5.1]{GKKS2014}}]\label{lem:decayapprox}
 Let $(m_\lambda)_{\lambda\in \Lambda}$ be a frame in $\mathcal{H}$ and $f=\sum c_\lambda m_\lambda$ an expansion of
 $f\in \mathcal{H}$ with respect to this frame. If $(c_\lambda)_\lambda\in \omega\ell^{2/(p+1)}(\Lambda)$ for some $p>0$,
 then the $N$-term approximation rate for $f$ achieved by keeping the $N$ largest coefficients is at least of order $N^{-p/2}$, i.e.
 \[
  \| f-f_N \|_2^2 \lesssim N^{-p}.
 \]
 In particular, the error of best $N$-term approximation decays at least with order $N^{-p/2}$.
 \end{lemm}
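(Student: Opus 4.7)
The plan is to reduce everything to the elementary decay estimate for the non-increasing rearrangement of the coefficient sequence, and then absorb the synthesis side using the upper frame bound.

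First I would set $q=2/(p+1)\in(0,2)$ and pass from the abstract $\omega\ell^q$-condition on $(c_\lambda)_\lambda$ to a pointwise bound on its non-increasing rearrangement $(c^\ast_n)_{n\in\N}$. The identity $\sup_n n^{1/q}|c^\ast_n|=\|(c_\lambda)_\lambda\|_{\omega\ell^q}$ recalled right before the lemma gives
\begin{align*}
|c^\ast_n|\lesssim n^{-1/q}=n^{-(p+1)/2}\qquad\text{for all }n\in\N.
\end{align*}
This is the only consequence of the weak-$\ell^q$ hypothesis I will need.

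Next, let $\Lambda_N\subseteq\Lambda$ be the indices corresponding to the $N$ largest coefficients, so that $f_N=\sum_{\lambda\in\Lambda_N}c_\lambda m_\lambda$ and hence $f-f_N=\sum_{\lambda\notin\Lambda_N}c_\lambda m_\lambda$. Here I would invoke the upper frame bound $B$ of $(m_\lambda)_\lambda$: a frame is in particular a Bessel sequence, so its synthesis operator is bounded from $\ell^2(\Lambda)$ to $\mathcal{H}$ with norm $\sqrt{B}$. Applying this to the tail sequence $(c_\lambda)_{\lambda\notin\Lambda_N}$, which lies in $\ell^2$ since by the previous step $\sum_{n>N}|c^\ast_n|^2\lesssim\sum_{n>N}n^{-(p+1)}<\infty$, yields
\begin{align*}
\|f-f_N\|^2\;\le\; B\sum_{\lambda\notin\Lambda_N}|c_\lambda|^2\;=\;B\sum_{n>N}|c^\ast_n|^2.
\end{align*}
The final identity uses that removing the $N$ largest coefficients leaves precisely the tail of the non-increasing rearrangement from index $N+1$ onward.

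The last step is a routine integral-comparison estimate: inserting the bound $|c^\ast_n|^2\lesssim n^{-(p+1)}$ gives $\sum_{n>N}|c^\ast_n|^2\lesssim N^{-p}$, and consequently $\|f-f_N\|^2\lesssim N^{-p}$. The addendum about the best $N$-term approximation follows for free, because by definition the best $N$-term error is bounded by the error of any concrete $N$-term approximation, in particular the one produced by keeping the $N$ largest coefficients. I do not foresee any real obstacle; the only point that requires a moment of care is checking that the tail coefficient sequence actually lies in $\ell^2$ so that the Bessel bound is applicable, but this is immediate from $p>0$.
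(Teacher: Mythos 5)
Your argument is correct and is essentially the standard proof given in \cite[Lemma 5.1]{GKKS2014}, which the paper cites: converting the $\omega\ell^{2/(p+1)}$ hypothesis to the pointwise rearrangement bound $|c^\ast_n|\lesssim n^{-(p+1)/2}$, applying the Bessel (upper frame) bound to control the synthesis of the tail by $\sum_{n>N}|c^\ast_n|^2$, and finishing with the integral comparison $\sum_{n>N}n^{-(p+1)}\lesssim N^{-p}$. No gaps; the remark that the best $N$-term error is bounded by the greedy error is exactly the intended reading of the final sentence of the lemma.
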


As illustrated by Lemma~\ref{lem:decayapprox} the decay rate of the frame coefficients determines the $N$-term approximation rate.
In particular, if the sequence $\left(\left\langle f, m_\lambda\right\rangle\right)_{\lambda\in \Lambda}$
of frame coefficients lies in $\ell^p$
for $p<2$,
the best approximation rate of the dual frame
$\left(\tilde{m}_\lambda\right)_{\lambda\in\Lambda}$
is at least of order $N^{-(1/p -1/2)}$.

Let us now assume that we have two frames $(m_\lambda)_{\lambda\in \Lambda}$ and $(p_\mu)_{\mu \in \Delta}$ in the Hilbert space $\mathcal{H}$ and expansion
coefficients for $f\in \mathcal{H}$ with respect to these two systems. Then these frames provide the same $N$-term approximation rate
for $f$, if the corresponding expansion coefficients have similar decay, e.g.\ if they belong to the same $\ell^p$-space.
We recall \cite[Proposition 5.2]{GKKS2014} and formulate this result in an abstract Hilbert space setting.

\begin{prop}[{\cite[Proposition 5.2]{GKKS2014}}]
Let $p\in(0,2)$, and let $(m_\lambda)_{\lambda\in \Lambda}$ and $(p_\mu)_{\mu \in \Delta}$ be frames in a Hilbert space $\mathcal{H}$ and
$(\tilde{m}_\lambda)_{\lambda\in \Lambda}$
a dual frame for $(m_\lambda)_{\lambda\in \Lambda}$ such that
\[
\left\|\left(\langle  m_{\lambda},p_\mu\rangle\right)_{\lambda\in \Lambda, \mu\in \Delta}\right\|_{\ell^p\to \ell^p}<\infty.
\]
Then $(\langle f,\tilde{m}_\lambda \rangle)_\lambda \in \ell^p(\Lambda)$
implies $(\langle f,p_\mu \rangle)_\mu \in \ell^p(\Delta)$.
In particular, $f\in\mathcal{H}$ can be encoded by the $N$ largest
frame coefficients from $(\langle f,p_\mu \rangle)_\mu$ up to accuracy $\lesssim N^{-(1/p-1/2)}$.
\end{prop}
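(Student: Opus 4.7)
The proof essentially amounts to transporting decay through the cross-Gramian by means of the dual frame identity, so the plan is short and mostly bookkeeping.

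First, since $(\tilde m_\lambda)_{\lambda\in\Lambda}$ is a dual frame for $(m_\lambda)_{\lambda\in\Lambda}$, every $f\in\mathcal H$ admits the reconstruction
\[
f = \sum_{\lambda\in\Lambda} \langle f,\tilde m_\lambda\rangle\, m_\lambda,
\]
with unconditional convergence in $\mathcal H$. Testing this against $p_\mu$ and using continuity of the inner product gives
\[
\langle f, p_\mu\rangle = \sum_{\lambda\in\Lambda} \langle f,\tilde m_\lambda\rangle \,\langle m_\lambda, p_\mu\rangle,\qquad \mu\in\Delta.
\]
This is the key identity: the sequence $(\langle f, p_\mu\rangle)_\mu$ is obtained from $(\langle f,\tilde m_\lambda\rangle)_\lambda$ by applying the transpose of the cross-Gramian matrix $G:=(\langle m_\lambda, p_\mu\rangle)_{\lambda,\mu}$.

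Second, the hypothesis says precisely that $G$ (and hence its transpose, up to relabelling indices) defines a bounded operator $\ell^p(\Lambda)\to\ell^p(\Delta)$. Consequently,
\[
\bigl\|(\langle f, p_\mu\rangle)_{\mu\in\Delta}\bigr\|_{\ell^p} \lesssim \bigl\|(\langle f,\tilde m_\lambda\rangle)_{\lambda\in\Lambda}\bigr\|_{\ell^p},
\]
which proves the first assertion: $\ell^p$-membership of the analysis coefficients against $\tilde m_\lambda$ transfers to $\ell^p$-membership of the frame coefficients against $p_\mu$.

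Third, to deduce the approximation rate we invoke Lemma~\ref{lem:decayapprox}. Let $(\tilde p_\mu)_{\mu\in\Delta}$ be any dual frame of $(p_\mu)_{\mu\in\Delta}$ (for instance the canonical one), so that
\[
f = \sum_{\mu\in\Delta} \langle f, p_\mu\rangle\, \tilde p_\mu
\]
is a valid frame expansion whose coefficients are exactly $(\langle f, p_\mu\rangle)_\mu\in\ell^p(\Delta)$. Using the embedding $\ell^p\hookrightarrow\omega\ell^p$ and solving $2/(q+1)=p$ for $q=2/p-1>0$, Lemma~\ref{lem:decayapprox} applied to the system $(\tilde p_\mu)_\mu$ yields the $N$-term approximation error bound
\[
\|f-f_N\| \lesssim N^{-(1/p-1/2)},
\]
where $f_N$ is the $N$-term approximation retaining the indices of the $N$ largest $|\langle f, p_\mu\rangle|$.

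There is no real obstacle here; the only subtle point is the interchange of summation when passing to the identity for $\langle f,p_\mu\rangle$, which is justified by the unconditional convergence of frame expansions in $\mathcal H$ together with the continuity of $\langle\cdot,p_\mu\rangle$. Once that is noted, the rest is an immediate combination of the hypothesis on $G$ and the already-established Lemma~\ref{lem:decayapprox}.
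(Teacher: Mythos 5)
The paper does not supply its own proof of this proposition; it is quoted from \cite{GKKS2014}, and the same chain of reasoning is reproduced implicitly inside the proof of Theorem~\ref{thm:videoapprox}. Your argument is correct and matches that route: the reconstruction $f=\sum_\lambda \langle f,\tilde m_\lambda\rangle m_\lambda$ yields $\langle f,p_\mu\rangle=\sum_\lambda \langle f,\tilde m_\lambda\rangle\langle m_\lambda,p_\mu\rangle$, boundedness of the cross-Gramian on $\ell^p$ transports the coefficient decay, and Lemma~\ref{lem:decayapprox} together with $\ell^p\hookrightarrow\omega\ell^p$ and $q=2/p-1$ produces the rate $N^{-(1/p-1/2)}$ for the $N$-term approximation built from the dual of $(p_\mu)_\mu$. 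One cosmetic remark: the parenthetical ``and hence its transpose, up to relabelling indices'' is best dropped, since boundedness of a matrix on $\ell^p$ does not in general imply boundedness of its transpose on $\ell^p$ for $p\ne2$; you never need such a transfer, because in the paper's convention (visible in the proof of Theorem~\ref{thm:videoapprox}) the symbol $\left(\langle m_\lambda,p_\mu\rangle\right)_{\lambda,\mu}$ already denotes the operator that sums over $\lambda$ and outputs a sequence indexed by $\mu$, which is exactly the map appearing in your identity.
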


This proposition motivates the following notion of sparsity equivalence initially introduced in~\cite{Grohs2011} for
parabolic molecules.

\begin{defi}[{\cite[Definition 4.2]{Grohs2011}}]
Let  $p\in(0,\infty]$, and let $(m_\lambda)_{\lambda\in \Lambda}$ and $(p_\mu)_{\mu \in \Delta}$ be
frames in a Hilbert space $\mathcal{H}$. Then $(m_\lambda)_{\lambda\in \Lambda}$ and $(p_\mu)_{\mu \in \Delta}$
are {\em sparsity equivalent in $\ell^p$}, if
\[
\left\|\left(\langle {m}_{\lambda},p_\mu\rangle\right)_{\lambda\in \Lambda, \mu\in \Delta}\right\|_{\ell^p\to \ell^p}<\infty.
\]
\end{defi}

Sparsity equivalence, as pointed out in \cite{GKKS2014}, is not an equivalence relation.
Nevertheless, it allows to transfer approximation properties from one anchor system to other systems.

%-----------------------------------------------------
\subsection{Consistency of Parametrizations}
%-----------------------------------------------------

Our aim in this section is to categorize frames of $\alpha$-molecules in $L^2(\R^d)$ with respect to their approximation
behavior, building upon the notion of sparsity equivalence.
We emphasize that a system of $\alpha$-molecules does not per se constitute a frame.
In fact, the question if a system of functions in $L^2(\R^d)$ is a frame is decoupled from the question if it forms a system of $\alpha$-molecules.

Theorem~\ref{thm:sparseeqivmol} will provide sufficient conditions for two frames of $\alpha$-molecules to be sparsity equivalent,
based upon the notion of ($\alpha,k$)-consistency originally introduced in \cite{GKKS2014}.
To motivate this concept, we recall a simple estimate from \cite{Grohs2011} for the
operator norm of a matrix on discrete $\ell^p$ spaces.

\begin{lemm}[{\cite[Lemma 4.4]{Grohs2011}}]\label{lem:matnorm}
Let $\Lambda,\Delta$ be two discrete index sets, and let ${\bf A} : \ell^p(\Lambda)\to \ell^p(\Delta)$, $p>0$ be a linear mapping
defined by its matrix representation ${\bf A} = \left(A_{\lambda,\mu}\right)_{\lambda\in \Lambda ,\, \mu\in \Delta}$.
Then we have the bound
\[
\|{\bf A}\|_{\ell^p(\Lambda)\to \ell^p(\Delta)}\le\max\left\{\sup_{\lambda}\sum_{\mu}|A_{\lambda,\mu}|^{\min\{1,p\}},\sup_{\mu}\sum_{\lambda}|A_{\lambda,\mu}|^{\min\{1,p\}}\right\}^{1/\min\{1,p\}}.
\]

\end{lemm}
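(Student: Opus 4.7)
The bound splits naturally according to whether $\min\{1,p\}=1$ (i.e.\ $p\geq1$) or $\min\{1,p\}=p$ (i.e.\ $0<p<1$), so I would treat the two regimes separately. Denote for brevity
\[
K_{\mathrm{row}} := \sup_{\lambda}\sum_{\mu}|A_{\lambda,\mu}|^{\min\{1,p\}}, \qquad K_{\mathrm{col}} := \sup_{\mu}\sum_{\lambda}|A_{\lambda,\mu}|^{\min\{1,p\}}.
\]

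For $p\geq1$ the plan is to invoke Schur's test, or equivalently Riesz--Thorin interpolation between the endpoints $\ell^1\to\ell^1$ and $\ell^\infty\to\ell^\infty$. A direct application of the triangle inequality gives $\|{\bf A}\|_{\ell^1\to\ell^1}\leq K_{\mathrm{col}}$ and $\|{\bf A}\|_{\ell^\infty\to\ell^\infty}\leq K_{\mathrm{row}}$. Riesz--Thorin interpolation then yields
\[
\|{\bf A}\|_{\ell^p\to\ell^p} \leq K_{\mathrm{col}}^{1/p}\,K_{\mathrm{row}}^{1-1/p},
\]
and since both factors are bounded by $\max\{K_{\mathrm{row}},K_{\mathrm{col}}\}$ and the exponents sum to one, the product is at most that maximum. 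This is the claimed estimate, as $1/\min\{1,p\}=1$ in this regime.

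For $0<p<1$ I would not interpolate but use the $p$-subadditivity of the quasi-norm directly, namely the elementary inequality $|a+b|^p\leq|a|^p+|b|^p$. Expanding $\|{\bf A}c\|_p^p$ coordinate-wise, applying $p$-subadditivity inside each sum, and swapping the order of summation yields
\[
\|{\bf A}c\|_p^p \leq \sum_{\mu}\sum_{\lambda}|A_{\lambda,\mu}|^p|c_\lambda|^p = \sum_{\lambda}|c_\lambda|^p\sum_{\mu}|A_{\lambda,\mu}|^p \leq K_{\mathrm{row}}\,\|c\|_p^p.
\]
Taking $p$-th roots and noting that $K_{\mathrm{row}}\leq\max\{K_{\mathrm{row}},K_{\mathrm{col}}\}$ gives the bound with exponent $1/p=1/\min\{1,p\}$. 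Only one of the two quantities inside the maximum is actually used here; the symmetric formulation is harmless and convenient for later applications, where it may be easier to estimate one sum than the other.

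There is no substantial obstacle -- the result is essentially the classical Schur test combined with the elementary fact that in the quasi-Banach regime $p<1$ the column-sum bound with $p$-th powers already suffices. The only minor subtlety is the discontinuity of the exponent at $p=1$, which is precisely why the factor $\min\{1,p\}$ appears in the statement and forces the case split above.
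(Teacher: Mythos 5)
Your argument is correct, and it is essentially the standard proof of this Schur-type bound; the paper itself gives no proof but cites Grohs--Kutyniok's parabolic molecules paper, where the same two-case structure (Riesz--Thorin interpolation between $\ell^1$ and $\ell^\infty$ endpoint bounds for $p\ge 1$, $p$-subadditivity of $t\mapsto t^p$ for $0<p<1$) is the natural route. One small inconsistency worth flagging: given that $\mathbf{A}\colon\ell^p(\Lambda)\to\ell^p(\Delta)$ acts via $(\mathbf{A}c)_\mu=\sum_\lambda A_{\lambda,\mu}c_\lambda$, the triangle inequality actually gives $\|\mathbf{A}\|_{\ell^1\to\ell^1}\le K_{\mathrm{row}}$ and $\|\mathbf{A}\|_{\ell^\infty\to\ell^\infty}\le K_{\mathrm{col}}$, i.e.\ the opposite assignment to what you wrote (and similarly it is $K_{\mathrm{row}}$, not a ``column-sum bound,'' that your $p<1$ computation produces); since the final estimate is the symmetric maximum of the two, this does not affect the conclusion, but the labels should be kept straight.
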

%
 %
%     \begin{proof}
%     \deleted{The proof for $p<1$ follows easily from the fact that
%     %
%     $
%         |x + y|^p\le |x|^p + |y|^p
%     $
%     %
%     for $x,y\in\mathbb{R}$.
%     To show the case $p\geq 1$ one proves the assertion for $p=1$ and $p=\infty$. The claim then follows by interpolation.}
% \end{proof}
 %

We apply this lemma to the cross-Gramian ${\bf A}$ of two systems of $\alpha$-molecules,
and aim for sufficient conditions for the right hand side to be finite.
The following notion was introduced in \cite{GKKS2014}.

\begin{defi}[{\cite[Definition 5.5]{GKKS2014}}]
Let $\alpha \in [0,1]$ and $k > 0$. Two parametrizations $(\Lambda,\Phi_\Lambda)$ and $(\Delta,\Phi_\Delta)$
are called \emph{($\alpha,k$)-consistent}, if
\begin{align} \label{eq:consCond}
\sup_{\lambda\in \Lambda}\sum_{\mu\in\Delta}\omega_{\alpha}\left(\lambda,\mu\right)^{-k} <\infty \quad\text{and}\quad \sup_{\mu\in \Delta}\sum_{\lambda\in\Lambda}
\omega_{\alpha}\left(\lambda,\mu\right)^{-k} <\infty.
\end{align}
\end{defi}
\begin{bem} \label{rem:consCond}
 Note that due to symmetry of the distance function $\omega_\alpha$ it suffices to check only one of the two conditions in equation~\eqref{eq:consCond} to prove $(\alpha, k)$-consistency.
\end{bem}

In view of Theorem~\ref{thm:almorth}, the consistency of the parametrizations of two systems of $\alpha$-molecules provides a convenient sufficient condition for their sparsity equivalence.

\begin{theo}\label{thm:sparseeqivmol}
Let $\alpha \in [0,1]$, $d\in\N$, $d \geq 2$, $k > 0$, and $p\in(0,\infty]$.  Let $(m_\lambda)_{\lambda\in\Lambda}$ and $(p_\mu)_{\mu\in\Delta}$ be two frames of $d$-dimensional
$\alpha$-molecules of order $(L,M,N_1,N_2)$ with ($\alpha,k$)-consistent parametrizations $(\Lambda,\Phi_\Lambda)$ and $(\Delta,\Phi_\Delta)$ satisfying
\[
s_\lambda\ge c, s_\mu\ge c \text{ for all }\lambda\in\Lambda,\mu\in\Delta
\]
and with $q:=\min\{1,p\}$
\[
L\geq 2\frac{k}{q},\quad  M> 3\frac{k}{q} - d + \frac{1+\alpha(d-1)}{2} ,\quad N_1>\frac{d}{2},\: N_1\geq \frac{k}{q}+\frac{1+\alpha(d-1)}{2} , \quad N_2\geq 2\frac{k}{q}+d-2.
\]
Then $(m_\lambda)_{\lambda\in\Lambda}$ and $(p_\mu)_{\mu\in\Delta}$ are sparsity equivalent in $\ell^p$.
\end{theo}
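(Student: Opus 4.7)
The plan is to combine the cross-Gramian decay estimate from Theorem~\ref{thm:almorth} with the $\ell^p$ matrix norm bound from Lemma~\ref{lem:matnorm}, using $(\alpha,k)$-consistency to close the estimate. The whole argument is a chain of three steps, each essentially a direct application of an earlier result.

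First, I would pick the integer $N$ needed in Theorem~\ref{thm:almorth}: since $q = \min\{1,p\}$, set $N := \lceil k/q \rceil \in \N$, so that $N \geq k/q$. Then I check that the order hypotheses stated in Theorem~\ref{thm:sparseeqivmol} imply the order hypotheses of Theorem~\ref{thm:almorth} for this $N$: the assumptions $L \geq 2k/q$, $M > 3k/q - d + \tfrac{1+\alpha(d-1)}{2}$, $N_1 > d/2$, $N_1 \geq k/q + \tfrac{1+\alpha(d-1)}{2}$, and $N_2 \geq 2k/q + d-2$ are designed exactly so that after replacing $k/q$ by the slightly larger integer $\lceil k/q \rceil$ (using that $L, N_1, N_2$ are integers and that the $M$-condition is strict) one obtains the required bounds $L \geq 2N$, $M > 3N-d+\tfrac{1+\alpha(d-1)}{2}$, $N_1 \geq N + \tfrac{1+\alpha(d-1)}{2}$, and $N_2 \geq 2N + d-2$. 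The scale condition $s_\lambda, s_\mu \geq c$ carries over verbatim. Theorem~\ref{thm:almorth} then yields, uniformly in $\lambda,\mu$,
\begin{equation*}
    \bigl|\langle m_\lambda, p_\mu\rangle\bigr| \lesssim \omega_\alpha(\lambda,\mu)^{-N} \leq \omega_\alpha(\lambda,\mu)^{-k/q},
\end{equation*}
the second inequality following from $\omega_\alpha \geq 1$ and $N \geq k/q$.

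Next, denote the cross-Gramian by $\mathbf{A} = (\langle m_\lambda, p_\mu\rangle)_{\lambda,\mu}$. Lemma~\ref{lem:matnorm} bounds
\begin{equation*}
    \|\mathbf{A}\|_{\ell^p \to \ell^p} \leq \max\!\left\{\sup_{\lambda\in\Lambda}\sum_{\mu\in\Delta} |\langle m_\lambda, p_\mu\rangle|^{q},\ \sup_{\mu\in\Delta}\sum_{\lambda\in\Lambda} |\langle m_\lambda, p_\mu\rangle|^{q}\right\}^{1/q}.
\end{equation*}
Raising the pointwise estimate to the power $q$ turns $\omega_\alpha^{-k/q}$ into $\omega_\alpha^{-k}$, so each of the two inner sums is bounded by a constant times $\sup_\lambda \sum_\mu \omega_\alpha(\lambda,\mu)^{-k}$ or its symmetric counterpart. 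By the definition of $(\alpha,k)$-consistency (see Remark~\ref{rem:consCond}) both of these suprema are finite, hence $\|\mathbf{A}\|_{\ell^p \to \ell^p} < \infty$, which is exactly the definition of sparsity equivalence of $(m_\lambda)_\lambda$ and $(p_\mu)_\mu$ in $\ell^p$.

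The only genuinely delicate point in this argument is the bookkeeping when translating the order assumptions in Theorem~\ref{thm:sparseeqivmol} (formulated in terms of the real parameter $k/q$) into the integer-valued hypotheses of Theorem~\ref{thm:almorth}; this is the reason one needs a strict inequality on $M$ and the combination of $L, N_1, N_2 \in \N_0$. Beyond this, the proof is mechanical: Theorem~\ref{thm:almorth} does all of the analytic work, and Lemma~\ref{lem:matnorm} together with consistency yields the operator bound.
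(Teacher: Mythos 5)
Your argument runs along exactly the same lines as the paper's proof: reduce to the two column/row sums via Lemma~\ref{lem:matnorm}, feed in the decay estimate from Theorem~\ref{thm:almorth}, raise to the $q$:th power, and close with $(\alpha,k)$-consistency. So the route is the same. You correctly identify the only nontrivial step — choosing an integer $N$ in Theorem~\ref{thm:almorth} so that $N\ge k/q$ \emph{and} the order hypotheses hold — as the delicate point, but your claimed resolution does not actually go through. It is not true that $L\ge 2k/q$ and $L\in\N_0$ imply $L\ge 2\lceil k/q\rceil$: take $k/q=3/2$, so $2k/q=3$, and $L=3$ is admissible, yet $2\lceil k/q\rceil = 4>3$. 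The same failure occurs for the $N_1$, $N_2$ and $M$ conditions, where replacing $k/q$ by the strictly larger $\lceil k/q\rceil$ increases the required lower bound beyond what the hypotheses guarantee. Integrality and the strictness of the $M$-condition do not rescue this when $k/q\notin\N$. Concretely, with $k/q=5/2$, $q=1$, and the minimal admissible order ($L=5$, $N_2 = 2\lceil 5/2\rceil + d - 2 - 1$, etc.), the largest integer $N$ compatible with $L\ge 2N$ is $N=2<5/2$, so Theorem~\ref{thm:almorth} only delivers $\omega_\alpha^{-2}$, which raised to the $q$:th power gives $\omega_\alpha^{-2}$ rather than the needed $\omega_\alpha^{-k}=\omega_\alpha^{-5/2}$, and $(\alpha,5/2)$-consistency does not control $\sum_\mu\omega_\alpha^{-2}$.

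That said, you should be aware that this is an imprecision you have inherited from the paper itself: the paper's proof simply asserts $|\langle m_\lambda,p_\mu\rangle|\lesssim\omega_\alpha(\lambda,\mu)^{-k/q}$ ``by Theorem~\ref{thm:almorth}'', even though that theorem only yields integer exponents $-N$. The intended (and harmless) fix is either to read the order hypotheses of Theorem~\ref{thm:sparseeqivmol} with $k/q$ replaced by $\lceil k/q\rceil$, or to note that in every actual application the order is taken with enough slack that the ceiling is absorbed. Your instinct to make the choice of $N$ explicit is the right one; you just need to state honestly that the hypotheses as written should be tightened to $\lceil k/q\rceil$ rather than claiming the ceiling comes for free from integrality.
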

\begin{proof}
Let $q:=\min\{1,p\}$. By Lemma \ref{lem:matnorm}, it suffices to prove that
\[
 \max \left\{ \sup_{\lambda\in\Lambda}\sum_{\mu\in \Delta} |\langle m_{\lambda},p_\mu\rangle|^q, \sup_{\mu\in\Delta}\sum_{\lambda\in \Lambda}
|\langle m_{\lambda},p_\mu\rangle|^q\right\}^{1/q} <\infty.
\]
Since, by Theorem \ref{thm:almorth}, we have
$
|\langle m_{\lambda},p_\mu\rangle|\lesssim \omega_{\alpha}(\lambda,\mu)^{-k/q},
$
we can conclude that
\[
\max \left\{ \sup_{\lambda\in\Lambda}\sum_{\mu\in \Delta}|\langle m_{\lambda},p_\mu\rangle|^q,\sup_{\mu\in\Delta}\sum_{\lambda\in \Lambda}|\langle m_{\lambda},p_\mu\rangle|^q\right\}%
\lesssim\max \left\{ \sup_{\lambda\in\Lambda}\sum_{\mu\in \Delta}\omega_{\alpha}(\lambda,\mu)^{-k},\sup_{\mu\in\Delta}\sum_{\lambda\in \Lambda}\omega_{\alpha}(\lambda,\mu)^{-k}\right\}
\]
with the expression on the right hand side being finite due to the $(\alpha,k)$-consistency of the parame\-tri\-zations $(\Lambda,\Phi_\Lambda)$ and $(\Delta,\Phi_\Delta)$.
The proof is completed.
\end{proof}

This theorem allows to categorize frames of $\alpha$-molecules according to their sparse approximation behavior.
The general strategy is as follows.  If an approximation result for a specific system of $\alpha$-molecules is known
and a class of $\alpha$-molecules satisfies the hypotheses of Theorem \ref{thm:sparseeqivmol}, i.e.\ they are all sparsity equivalent to this
specific system, they automatically inherit its known approximation behavior.
In this way, one in the end obtains a stand-alone result for frames of $\alpha$-molecules to exhibit sparse approximation, depending solely on the parametrization and the order.

%-----------------------------------------------------
\section{Sparse Approximation of Video Data}\label{sec:videodata}
%-----------------------------------------------------

In this section we demonstrate with
a specific example how the machinery of $\alpha$-molecules can be applied in practice.
In our exemplary  application, we are interested in the sparse representation of video signals, modelled by the class
of cartoon-like functions $\mathcal{E}^2(\R^3)$ introduced below.

Following the general methodology, we first need a suitable anchor system, for which a
sparse approximation result with respect to $\mathcal{E}^2(\R^3)$ is known.
Utilizing Theorem~\ref{thm:sparseeqivmol} the framework can then transfer the approximation rate from this reference system to other systems.
In this way we will be able to identify a large class of
representation systems, which provide almost optimal sparse approximation for $\mathcal{E}^2(\R^3)$.

%-----------------------------------------------------
\subsection{Cartoon-like Functions}
%-----------------------------------------------------

A suitable model for image and video data is provided by the class of cartoon-like functions, first introduced by Donoho \cite{Don01} and later
extended e.g.\ in \cite{Kutyniok2012}. We shall use the following simplified model in dimensions $d=2$ and $d=3$.

\begin{defi}[{\cite{Don01},\cite[Definition 2.1]{Kutyniok2012}}]\label{def:cartoon}
For fixed $\nu>0$ and $d\in\{2,3\}$ the \emph{class $\mathcal{E}^2(\R^d)$ of cartoon-like functions} consists of functions $f:\R^d\rightarrow\C$ of the form
\[
f=f_0+f_1\chi_B,
\]
where $B\subset[0,1]^d$ and $f_i\in C^2(\R^d)$ with $\supp f_i\subset [0,1]^d$ and $\|f_i\|_{C^2}\le 1$ for each $i=0,1$.
For dimension $d=2$, we assume that the boundary $\partial B$ is a closed $C^2$-curve with curvature bounded by $\nu$,
and, for $d=3$, the discontinuity $\partial B$ shall be a closed $C^2$-surface with principal curvatures bounded by $\nu$.
\end{defi}

This model is justified by the observation that real-life images and video data typically consist of
smooth regions, separated by piecewise smooth boundaries. Without loss of generality we restrict in Definition~\ref{def:cartoon}
to cartoon-like functions with smooth boundaries.

%------------------------
\subsection{Optimal Approximation for Cartoon-like Functions}
%------------------------

In \cite{Don01,Kutyniok2012} the optimal approximation rates for $\mathcal{E}^2(\R^2)$ and $\mathcal{E}^2(\R^3)$, achievable with algorithms satisfying a \emph{polynomial depth search} constraint, were derived.
An algorithm for sparse approximation is thereby restricted to polynomial depth search in a given (countable) dictionary,
if there exists a polynomial $\pi$ such that the algorithm only chooses from the first $\pi(N)$ vectors of the dictionary when forming the $N$:th sparse approximation~\cite{Don01}.
Such a constraint is very natural from a practical standpoint and has the nice side-effect that it ensures the existence of the best $N$-term approximation.
In general, this may not be the case, as the instance of a countable dense subset of $L^2(\R^d)$ as a dictionary shows. Here the best $1$-term approximation does not always exist.

We now cite the result of \cite{Don01,Kutyniok2012}.
In \cite{Kutyniok2012} it is conjectured that the result also generalizes to higher dimensions.

\begin{theo}[{\cite[Theorem 7.2]{Don01},\cite[Theorem 3.2]{Kutyniok2012}}]\label{thm:optapprox}
Let $d\in\{2,3\}$. The best $N$-term approximation rate for $\mathcal{E}^2(\R^d)$, achieved by an arbitrary dictionary under the restriction of polynomial depth search, cannot exceed
\[
\|f-f_N\|_2^2 \lesssim N^{ -\frac{2}{d-1}},
\]
where $f_N$ is the best $N$-term approximation of $f\in\mathcal{E}^2(\R^d)$.
\end{theo}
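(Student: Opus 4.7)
The plan is to follow the classical hypothesis-testing strategy of Donoho: exhibit a subfamily of $\mathcal{E}^2(\R^d)$ containing so many pairwise $L^2$-separated cartoons that no dictionary subject to a polynomial depth search can approximate all of them within their separation radius using too few atoms. The argument splits into two ingredients — a \emph{geometric construction} producing many well-separated cartoons, and an \emph{information-theoretic counting} step bounding the number of $N$-term approximations a polynomial-depth-search scheme can produce.

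For the construction I would fix a base cartoon $f_\star\in\mathcal{E}^2(\R^d)$ whose discontinuity $\partial B_\star$ is a smooth piece of sphere sitting inside $[0,1]^d$ with principal curvatures strictly less than $\nu$. Choose disjoint smooth bump profiles $\{\psi_j\}_{j=1}^{J}$ supported on $\partial B_\star$ with $J\asymp \varepsilon^{-(d-1)}$, each of tangential diameter $\asymp\varepsilon$ and normal amplitude $\asymp\varepsilon^2$. To every binary word $\omega\in\{0,1\}^J$ associate a cartoon $f_\omega$ obtained by perturbing $\partial B_\star$ in the normal direction by $\sum_j\omega_j\psi_j$. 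A single bump contributes an $L^2$-mass of order $\varepsilon^{d+1}$ to the symmetric difference $f_\omega-f_{\omega'}$, and a standard random-codebook argument (Hoeffding applied to the pairwise $L^2$-distances) produces a subfamily of size $M\gtrsim 2^{c\varepsilon^{-(d-1)}}$ whose pairwise $L^2$-distances all exceed a fixed multiple of $\varepsilon$.

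For the counting step, under a polynomial depth search $\pi$ every $N$-term approximation amounts to choosing $N$ atoms from the first $\pi(N)$ and assigning them coefficients; quantizing the coefficients at resolution $\varepsilon/\sqrt{N}$ shows that the number of $L^2$-balls of radius $\varepsilon/4$ needed to cover all reachable approximations is at most $2^{CN\log(\pi(N)/\varepsilon)}$. Approximating every element of the separated family to accuracy $\varepsilon/4$ therefore forces $c\varepsilon^{-(d-1)}\lesssim N\log(N/\varepsilon)$. Solving for $\varepsilon$ as a function of $N$ yields $\|f-f_N\|_2^2\gtrsim N^{-2/(d-1)}$ up to logarithmic factors, and the log can be absorbed into the implicit constant by taking $\varepsilon\to 0$ together with $N\to\infty$ along the critical relation — which is exactly the asymptotic statement of the theorem.

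The main obstacle is the geometric one: verifying that the perturbed level sets still lie in $\mathcal{E}^2(\R^d)$ \emph{uniformly} in the binary code $\omega$. The scaling of the normal amplitude as $\varepsilon^2$ against a tangential diameter of $\varepsilon$ is precisely tuned so that the second fundamental form stays $O(1)$, but one must carry out the tensor calculation — especially in the case $d=3$, where two principal curvatures must be controlled simultaneously — and ensure that the bump supports are sufficiently separated so that neighboring perturbations do not interact and compound the curvature. Once this geometric lemma is in place, both the probabilistic separation estimate and the covering count are standard and transfer essentially verbatim from $d=2$ to $d=3$.
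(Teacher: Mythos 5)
The paper does not prove Theorem~\ref{thm:optapprox}; it cites it from \cite{Don01} and \cite{Kutyniok2012}, so there is no ``paper's own proof'' to compare against. Your sketch does reproduce the correct template from those references: the hypothesis class of perturbed cartoons with tangential scale $\varepsilon$ and normal amplitude $\varepsilon^2$ is the right construction (a single bump indeed contributes $L^2$-mass $\asymp\varepsilon^{d+1}$, so that flipping $\sim\varepsilon^{-(d-1)}$ of them gives $L^2$-separation $\asymp\varepsilon$, recovering the Kolmogorov entropy $\asymp\varepsilon^{-(d-1)}$ of $\mathcal{E}^2(\R^d)$), and the polynomial-depth-search restriction is what makes the counting step finite. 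That much is sound.

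The genuine gap is in the final sentence of your counting step. The entropy-versus-covering argument you describe, with coefficient quantization and a union bound over $\binom{\pi(N)}{N}$ atom subsets, yields $\varepsilon^{-(d-1)}\lesssim N\log(\pi(N)/\varepsilon)$, hence only $\|f-f_N\|_2^2\gtrsim (N\log N)^{-2/(d-1)}$. The logarithm is not a constant and cannot be ``absorbed into the implicit constant by taking $\varepsilon\to0$ and $N\to\infty$ along the critical relation'': for any fixed $c$ the inequality $\|f-f_N\|_2^2\geq c\,N^{-2/(d-1)}$ is strictly stronger than the log-damped version for all large $N$, so the limiting argument proves nothing. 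The references remove the log by a more structural step: once $N$ atoms are fixed, their span is an $N$-dimensional subspace, and for a hypercube embedded in $J\asymp\varepsilon^{-(d-1)}$ orthogonal bump directions with $J$ a fixed multiple of $N$, a constant fraction of the codewords necessarily has residual $\gtrsim\varepsilon$ under projection onto \emph{any} $N$-dimensional subspace; the union bound then runs only over the polynomially many atom subsets and over a \emph{constant} fraction of hypotheses, which makes the two exponential counts match without a logarithmic correction. You should either incorporate this subspace/minimax refinement or weaken your conclusion to the log-damped bound. The geometric verification you flag (that the perturbed surfaces stay in $\mathcal{E}^2(\R^d)$ uniformly in $\omega$, with all $d-1$ principal curvatures controlled) is also nontrivial and must be written out, but your scaling $\varepsilon$ tangential versus $\varepsilon^2$ normal is indeed the right one for the second fundamental form to stay $O(1)$.
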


For $d=2$ it has been shown that this rate can indeed be achieved \cite{Don01} using so-called wedgelets, which are adaptive to the data. Moreover, there are several examples of non-adaptive frames in two and three dimensions which almost provide these optimal rates
\cite{CD04,GL07,Kutyniok2010,Guo2010,Kutyniok2012}, typically up to log-terms.
In particular, it was proven by Guo and Labate in \cite{Guo2010} that the smooth Parseval frame of band-limited 3D-shearlets $SH$ constructed by them
in \cite{Guo2012a} sparsely approximates the class $\mathcal{E}^2(\R^3)$ with an almost optimal approximation rate. We recall the definition of this particular system in Subsection~\ref{sssec:ssh}
and state the approximation result below.

\begin{theo}[{\cite[Theorem 3.1]{Guo2010}}]\label{thm:3Dshearapprox}
Let $SH=\{\psi_\lambda\}_{\lambda\in\Lambda_{SH}}$ be the smooth Parseval frame of 3D-shearlets defined in Subsection~\ref{sssec:ssh}.
Then the sequence of shearlet coefficients $\theta_\lambda(f):=\langle f, \psi_\lambda\rangle$, $\lambda\in\Lambda_{SH}$, associated with $f\in \mathcal{E}^2(\R^3)$ satisfies
\[
\sup_{f\in\mathcal{E}^2(\R^3)} |\theta_\lambda(f)|_{N} \lesssim N^{-1}\cdot \log(N),
\]
where $|\theta_\lambda(f)|_{N}$ denotes the $N$:th largest shearlet coefficient.
\end{theo}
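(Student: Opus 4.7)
The strategy follows Guo--Labate~\cite{Guo2010}, adapting the Cand\`{e}s--Donoho curvelet technique to three dimensions. I would analyse the shearlet coefficients scale by scale.

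First, decompose $f = f_0 + f_1\chi_B \in \mathcal{E}^2(\R^3)$. Both $f_0, f_1 \in C^2_c([0,1]^3)$, so using their $C^2$-regularity together with the fact that the band-limited shearlets $\hat\psi_\lambda$ have frequency support bounded away from the origin, repeated integration by parts yields $|\langle f_i, \psi_\lambda\rangle|\lesssim 2^{-jN}$ for arbitrary $N$, where $2^j\asymp s_\lambda$. This smooth contribution is therefore negligible for the sorted sequence; absorbing $f_1$ into a pointwise Taylor expansion on the shearlet's plate (which introduces only a lower-order correction) reduces the main task to estimating $\langle\chi_B,\psi_\lambda\rangle$.

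At scale $2^j$ the shearlet $\psi_\lambda$ is essentially supported on a spatial plate of dimensions $2^{-j/2}\times 2^{-j/2}\times 2^{-j}$ with short axis along the direction $e_\lambda$. Partition the scale-$j$ indices into those whose plate does not meet $\partial B$, and those whose plate does. For the non-intersecting indices, $\chi_B$ is constant on the plate and integration by parts (exploiting the near-vanishing moments of $\psi_\lambda$) yields $O(2^{-jN})$ decay. For the intersecting indices, since $\partial B$ is a compact $C^2$-surface of bounded area and each plate intersects $\partial B$ in a set of surface area $\lesssim 2^{-j}$, there are at most $O(2^j)$ spatial locations at scale $j$. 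For each such location, parameterise $\partial B$ locally by $y_d = h(y_1,\ldots,y_{d-1})$ after an appropriate rotation, Taylor expand $h$ to second order using the bounded principal curvatures, and perform the change of variables $y = A_{\alpha,s_\lambda}R_{\theta_\lambda}R_{\varphi_\lambda}(x - x_\lambda)$. Combining the almost-vanishing moments in the $e_d$-direction encoded in \eqref{eq:molcon} with the tangential decay of order $N_2$ leads to an estimate of the schematic form
\begin{align*}
|\langle\chi_B,\psi_\lambda\rangle| \lesssim 2^{-j}\bigl(1 + 2^{j/2}\varepsilon_\lambda\bigr)^{-N},
\end{align*}
where $\varepsilon_\lambda$ denotes the angular distance between $e_\lambda$ and the outer unit normal of $\partial B$ at the corresponding surface point.

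The angular grid on $\mathbb{S}^2$ contains $O(2^j)$ cells at scale $j$, of which only $O(1)$ per spatial location are within angular distance $2^{-j/2}$ of the local normal. Hence there are $O(2^j)$ coefficients of magnitude $\asymp 2^{-j}$ per scale, with all remaining angular contributions summing to a comparable controlled tail. Pooling across scales and inverting the counting estimate gives
\begin{align*}
\#\set{\lambda : |\langle\chi_B,\psi_\lambda\rangle| > \eta} \lesssim \eta^{-1}\log(\eta^{-1}),
\end{align*}
equivalently $|\theta_\lambda(f)|_N \lesssim N^{-1}\log N$. The main obstacle is obtaining the sharp coefficient bound $2^{-j}(1+2^{j/2}\varepsilon_\lambda)^{-N}$: the two-dimensional Taylor/stationary-phase analysis on $\partial B$ is substantially more delicate than in the two-dimensional cartoon case, because the normal direction now varies over a curved surface rather than along a curve, and one must carefully account for curvature-induced corrections and for the interaction between the shearlet's directional vanishing moments and the second fundamental form of $\partial B$.
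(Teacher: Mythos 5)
The paper does not prove this theorem. It is stated as a direct citation to Guo and Labate~\cite{Guo2010} (their Theorem~3.1), and no proof, nor even a sketch, appears in the present article; the result is used as a black-box anchor for the sparsity-equivalence transfer in Theorem~\ref{thm:videoapprox}. There is therefore no in-house argument against which to check your proposal.

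As a stand-alone outline of the external proof, your sketch does track the Cand\`{e}s--Donoho/Guo--Labate strategy at a high level: reduce to $\chi_B$, separate the plates meeting $\partial B$ from those that do not, exploit directional vanishing moments and angular localization, and count. Two concrete concerns, however. First, the counting as you state it does not yield the logarithm: $O(2^j)$ coefficients of size $\asymp 2^{-j}$ per scale, thresholded at $\eta$, sum to roughly $\eta^{-1}$, not $\eta^{-1}\log(\eta^{-1})$; the logarithmic loss appears only after one actually sums the angular tails $(1+2^{j/2}\varepsilon_\lambda)^{-N}$ over the $O(2^j)$ directions at each scale and tracks the borderline contributions, which your sketch gestures at but does not carry out. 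Second, and more seriously, the coefficient bound $|\langle\chi_B,\psi_\lambda\rangle|\lesssim 2^{-j}(1+2^{j/2}\varepsilon_\lambda)^{-N}$ is essentially the entire theorem; as you note yourself, establishing it requires a delicate two-dimensional Taylor/curvature analysis on $\partial B$ interacting with the shearlet's oscillation, and the sketch stops exactly where that work begins. To turn this into a genuine proof you would need to reconstruct that surface analysis from~\cite{Guo2010} in full, not merely invoke it schematically.
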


Theorem~\ref{thm:3Dshearapprox} shows that the shearlet coefficients belong to $\omega\ell^{p}(\Lambda_{SH})$ for every $p>1$.
In view of Lemma~\ref{lem:decayapprox},  for every $f\in\mathcal{E}^2(\R^3)$, the frame $SH$ therefore provides at least the approximation rate
\begin{align}\label{eq:rateref}
\|f-f_N\|_2^2\lesssim N^{-1+\varepsilon} \quad ,\text{$\varepsilon>0$ arbitrary},
\end{align}
where $f_N$ denotes the $N$-term approximation obtained from the $N$ largest coefficients.
According to Theorem~\ref{thm:optapprox}, this is almost the optimal approximation rate achievable for cartoon-like functions $\mathcal{E}^2(\R^3)$.
For small $\varepsilon>0$, we get arbitrarily close to the optimal rate.

The idea to use 3D-shearlets for processing video data has been tested in practice in~\cite{negi2012}.
The authors of said article develop a clever discretisation procedure, allowing a fast computation of the shearlet coefficients.
They then test how their discrete shearlet transform can be used to denoise and enhance video sequences, with promising results.

%------------------------
\subsection{Transfer of the Approximation Rate  }
%------------------------

 Our final goal is to find a large class of representation systems which achieve the almost optimal rate \eqref{eq:rateref}.
 For this, we put the machinery of $\alpha$-molecules to work.
 Via Theorem~\ref{thm:sparseeqivmol} it is possible to transfer the approximation rate~\eqref{eq:rateref} established for the smooth Parseval frame of $3$D-shearlets $SH$
 to other systems of 3-dimensional $\frac{1}{2}$-molecules. In fact, the frame $SH$ is a suitable choice for the reference system, since by Proposition~\ref{prop:sshmol} it
 constitutes a system of 3-dimensional $\frac{1}{2}$-molecules of order $(\infty,\infty,\infty,\infty)$ with respect to the parametrization $(\Lambda_{SH},\Phi_{SH})$
 The following result is then a direct application of the general theory.

\begin{theo}\label{thm:videoapprox}
Assume that a frame $(m_\lambda)_{\lambda\in \Lambda}$ of $3$-dimensional
$\frac{1}{2}$-molecules satisfies, for some $k>0$, the following two conditions:
\begin{itemize}
\item[(i)] its parametrization $(\Lambda,\Phi_\Lambda)$ is ($\frac{1}{2},k$)-consistent with $(\Lambda_{SH},\Phi_{SH})$,
\item[(ii)] its order $(L,M,N_1,N_2)$ satisfies
\[
L\geq 2k,\quad  M \geq 3k-2,\quad N_1 \geq k + 1,\quad N_1>3/2,\quad N_2\geq 2k+1.
\]
\end{itemize}
Then each dual frame $(\tilde{m}_\lambda)_{\lambda\in \Lambda}$ possesses an almost optimal $N$-term approximation rate for the class
of cartoon-like functions $\cE^{2}(\R^3)$, i.e.\ for all $f\in\mathcal{E}^2(\R^3)$
\[
\|f-f_N\|_2^2 \lesssim N^{-1+\varepsilon}, \quad\varepsilon >0 \text{ arbitrary},
\]
where $f_N$ denotes the $N$-term approximation obtained from the $N$ largest frame coefficients.
\end{theo}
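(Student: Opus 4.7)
The plan is to use the Guo--Labate band-limited shearlet frame $SH$ as the anchor system, and to transfer the almost-optimal $N$-term approximation rate established for $SH$ in Theorem~\ref{thm:3Dshearapprox} over to the frame $(m_\lambda)_{\lambda\in\Lambda}$ via the sparsity-equivalence machinery developed in Section~\ref{sec:sparseapprox}. The argument naturally splits into three layers: first, establish sparsity equivalence of $(m_\lambda)_\lambda$ and $SH$ via Theorem~\ref{thm:sparseeqivmol}; second, transfer the decay of the shearlet coefficients of $f$ to the decay of the $m_\lambda$-coefficients via the $\ell^p$ coefficient-transfer proposition preceding the definition of sparsity equivalence; third, invoke Lemma~\ref{lem:decayapprox} to convert coefficient decay into an $N$-term approximation rate for any dual frame.

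For the first step, I would apply Theorem~\ref{thm:sparseeqivmol} with $\alpha = 1/2$ and $d = 3$. With these choices one has $(1+\alpha(d-1))/2 = 1$, and for any $p \geq 1$ the exponent $q := \min\{1,p\}$ equals $1$, so the order conditions of Theorem~\ref{thm:sparseeqivmol} collapse precisely to the bounds $L \geq 2k$, $M > 3k - 2$, $N_1 > 3/2$, $N_1 \geq k+1$, $N_2 \geq 2k+1$ that appear in hypothesis~(ii). The reference frame $SH$ is a system of $3$-dimensional $\tfrac{1}{2}$-molecules of order $(\infty,\infty,\infty,\infty)$ with parametrization $(\Lambda_{SH},\Phi_{SH})$ by Proposition~\ref{prop:sshmol}, hence satisfies any finite order condition trivially. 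Combined with the $(\tfrac{1}{2},k)$-consistency asserted in (i), Theorem~\ref{thm:sparseeqivmol} yields sparsity equivalence of $(m_\lambda)_{\lambda\in\Lambda}$ and $SH$ in $\ell^p$ for every $p \in [1,\infty]$.

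For the second and third steps, I would fix an arbitrary $\varepsilon > 0$ and set $p := 2/(2-\varepsilon)$, which lies strictly above $1$. Theorem~\ref{thm:3Dshearapprox} gives, for every $f \in \mathcal{E}^2(\R^3)$, the inclusion of the shearlet coefficients $(\langle f,\psi_\lambda\rangle)_\lambda$ in $\omega\ell^{p'}(\Lambda_{SH})$ for every $p' > 1$; choosing $p' \in (1,p)$ and applying the embedding $\omega\ell^{p'} \hookrightarrow \ell^p$ (valid precisely because $p' < p$) places the shearlet coefficients of $f$ in $\ell^p(\Lambda_{SH})$. Since $SH$ is a Parseval frame, it is self-dual, and the $\ell^p$ coefficient-transfer proposition combined with the sparsity equivalence established above yields $(\langle f, m_\mu\rangle)_\mu \in \ell^p(\Lambda) \subset \omega\ell^p(\Lambda)$. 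Expanding $f$ in the dual frame as $f = \sum_\mu \langle f, m_\mu\rangle \tilde m_\mu$ and applying Lemma~\ref{lem:decayapprox} with the relation $2/(p_L+1) = p$, i.e.\ $p_L = 1-\varepsilon$, produces the desired bound $\|f - f_N\|_2^2 \lesssim N^{-(1-\varepsilon)}$.

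Since all of the substantial analytic work is externalised into Theorems~\ref{thm:almorth} and~\ref{thm:sparseeqivmol} and Lemma~\ref{lem:decayapprox}, I do not anticipate any serious obstacle; the proof is essentially a bookkeeping exercise threading together three different exponents. The only mildly delicate point is that the embedding $\omega\ell^{p'} \hookrightarrow \ell^p$ requires $p' < p$ strictly, which forces the intermediate choice $p' \in (1,p)$ and is the mechanism by which the $\varepsilon$ loss propagates through the argument. A secondary concern is verifying the lower scale bound $s_\lambda \geq c$ needed to invoke Theorem~\ref{thm:almorth} (and hence Theorem~\ref{thm:sparseeqivmol}); this should be immediate for the shearlet parametrization, whose scales are bounded below, and amounts to a natural standing assumption on $(\Lambda,\Phi_\Lambda)$.
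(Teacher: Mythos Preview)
Your proposal is correct and follows essentially the same route as the paper's own proof: use Proposition~\ref{prop:sshmol} to cast $SH$ as $\tfrac12$-molecules of infinite order, invoke Theorem~\ref{thm:sparseeqivmol} (with $d=3$, $\alpha=\tfrac12$, $q=1$) to obtain $\ell^p$-sparsity equivalence for every $p>1$, transfer the $\omega\ell^{p}$-decay of the shearlet coefficients from Theorem~\ref{thm:3Dshearapprox} via the self-duality of the Parseval frame $SH$ to the coefficients $(\langle f,m_\mu\rangle)_\mu$, and finish with Lemma~\ref{lem:decayapprox}. Your additional remark about the standing scale bound $s_\lambda\ge c$ is well taken; the paper leaves this implicit.
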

\begin{proof}
Let $SH=\{\psi_\lambda\}_{\lambda\in\Lambda_{SH}}$ be the Parseval frame of $3$D-shearlets from Subsection~\ref{sssec:ssh},
and take $f\in \mathcal{E}^{2}(\R^3)$. By Theorem~\ref{thm:3Dshearapprox} the sequence of shearlet
coefficients $(\theta_\lambda)_\lambda$ given by $\theta_\lambda=\langle f, \psi_\lambda \rangle$ belongs to $\omega\ell^{p}(\Lambda_{SH})$
for every $p>1$. Since $\omega\ell^{p}\hookrightarrow \ell^{p+\epsilon}$ for arbitrary $\varepsilon>0$,
this further implies $(\theta_\lambda)_\lambda \in \ell^{p}(\Lambda_{SH})$ for every $p>1$.
Let now
\[
f=\sum_{\mu\in\Lambda} c_\mu \tilde{m}_\mu
\]
be the canonical expansion of $f$ with respect to the dual frame $(\tilde{m}_\mu)_\mu$, with frame coefficients
$(c_\mu)_\mu$. Note that since $SH$ is a Parseval frame, the canonical dual frame of $SH$ is equal to $SH$ itself. Therefore the coefficents are given by
\[
c_\mu= \langle f, m_\mu\rangle = \Big\langle \sum_{\lambda} \theta_\lambda \psi_\lambda , m_\mu\Big\rangle=\sum_{\lambda} \langle  \psi_\lambda, m_\mu\rangle \theta_\lambda.
\]
 Thus, they are related to the shearlet coefficients $(\theta_\lambda)_\lambda$ by the cross-Gramian $(\langle \psi_\lambda,m_\mu \rangle)_{\lambda,\mu}$.
By Theorem~\ref{thm:sparseeqivmol}, conditions (i) and (ii) guarantee that the frame $(m_\mu)_{\mu\in\Lambda}$ is sparsity equivalent
to $(\psi_\lambda)_{\lambda\in\Lambda_{SH}}$ in $\ell^p$ for every $p>1$. This implies that the cross-Gramian is a bounded operator
$\ell^p(\Lambda_{SH})\rightarrow\ell^p(\Lambda)$, which maps $(\theta_\lambda)_\lambda$ to $(c_\mu)_\mu$. Hence, $(c_\mu)_\mu\in\ell^{p}(\Lambda)$
for every $p>1$. The embedding $\ell^{p}\hookrightarrow \omega\ell^{p}$ then proves $(c_\mu)_\mu\in\omega\ell^{p}(\Lambda)$
for every $p>1$. Finally, for arbitrary $\varepsilon>0$, the application of Lemma~\ref{lem:decayapprox} yields
\[
\|f-f_N\|_2^2 \lesssim N^{-1+\varepsilon},
\]
where $f_N$ denotes the $N$-term approximation with respect to the system $(\tilde{m}_\mu)_\mu$ obtained by
choosing the $N$ largest coefficients.
\end{proof}

Theorem~\ref{thm:videoapprox} specifies a large class of multiscale systems
with almost optimal approximation performance for video data $\mathcal{E}^{2}(\R^3)$.
According to Remark~\ref{rem:consist}
condition~(i) is in particular fulfilled by every $\frac{1}{2}$-shearlet parametrization (see Theorem~\ref{thm:shearletmol}) for $k>3$.
Hence, due to condition~(ii) all systems of 3-dimensional $\frac{1}{2}$-shearlet molecules of order
\[
L\ge 7,\quad  M \ge 8,\quad N_1 \ge 5,\quad N_2\geq 8,
\]
provide almost optimal approximation for $\mathcal{E}^{2}(\R^3)$.

Taking into account Proposition~\ref{prop:compactShearlets}, the statement of Theorem~\ref{thm:videoapprox} in particular includes
the following result for compactly supported shearlet frames.
\begin{cor}
Any dual frame of a shearlet frame of the form \eqref{eq:affineshear} generated by compactly supported functions $\phi,\psi^1,\psi^2,\psi^3\in L^2(\R^3)$, so that
$\phi\in C^{13}(\R^3)$ and so that for each $\varepsilon\in\{1,2,3\}$
\begin{enumerate}[(i)]
\item $\partial^{\gamma}\psi^\varepsilon$ exists and is continuous for every $\gamma=(\gamma_1,\gamma_2,\gamma_3)^T\in\N_0^3$ with $|\gamma|_\infty\leq 13$ and $\gamma_\varepsilon \leq 5$,
\item $\psi^\varepsilon$ has at least 15 vanishing directional moments in direction $e_\varepsilon$,
\end{enumerate}
 provides the almost optimal approximation rate~\eqref{eq:rateref} for the cartoon video class $\mathcal{E}^2(\R^3)$.
\end{cor}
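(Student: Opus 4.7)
The plan is to verify that the shearlet frame described in the corollary satisfies all hypotheses of Theorem~\ref{thm:videoapprox} (with any $k$ chosen slightly above $3$) and then simply invoke that theorem on each dual frame. The discussion immediately preceding the corollary has already carried out most of the bookkeeping: for $k>3$, condition~(i) of Theorem~\ref{thm:videoapprox} holds for every $\frac12$-shearlet parametrization by Remark~\ref{rem:consist}, and condition~(ii) reduces to the requirement that the molecule order $(L,M,N_1,N_2)$ satisfy $L\ge 7$, $M\ge 8$, $N_1\ge 5$ and $N_2\ge 8$ (the extra condition $N_1>3/2$ then being automatic).

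Accordingly, the only genuinely new task is to certify that the compactly supported shearlet system \eqref{eq:affineshear} built from the given $\phi,\psi^1,\psi^2,\psi^3$ actually constitutes a system of $3$-dimensional $\frac12$-shearlet molecules of order at least $(7,8,5,8)$. For this I would appeal directly to Proposition~\ref{prop:compactShearlets}, whose very purpose is to translate direct-space regularity and vanishing-moment assumptions on compactly supported generators into a molecule order. The numerical assumptions of the corollary are calibrated to this proposition: the smoothness $\phi\in C^{13}(\R^3)$ together with the mixed regularity $\partial^\gamma\psi^\varepsilon\in C(\R^3)$ for $|\gamma|_\infty\le 13$ controls the Fourier-side decay parameters $N_1$ and $N_2$ as well as the spatial-decay parameter $L$, while the $15$ directional vanishing moments of each $\psi^\varepsilon$ in the direction $e_\varepsilon$ supply the moment parameter $M$. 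The asymmetric cap $\gamma_\varepsilon\le 5$ reflects the anisotropic Fourier weight in \eqref{eq:molcon}, where decay in the $e_d$-direction is paid for more cheaply than decay in the remaining directions.

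Once the molecule order is established, Theorem~\ref{thm:videoapprox} applies verbatim to any dual frame $(\tilde m_\lambda)_\lambda$ and yields $\|f-f_N\|_2^2\lesssim N^{-1+\varepsilon}$ for every $f\in\mathcal{E}^2(\R^3)$ and every $\varepsilon>0$, which is the claimed almost-optimal rate \eqref{eq:rateref}. I do not expect any serious obstacle: Proposition~\ref{prop:compactShearlets} does the heavy lifting, and the numerical margins in the corollary are generous enough that the required order $(7,8,5,8)$ follows with room to spare. The only spot where mild care is needed is the matching between the directional cap $\gamma_\varepsilon\le 5$ and the weight $\langle|\xi|\rangle^{-N_1}\langle\otnorm{\xi}\rangle^{-N_2}$ in \eqref{eq:molcon}, to ensure that the asymmetric smoothness assumption is indeed what produces the pair $(N_1,N_2)=(5,8)$ through the proposition rather than a smaller $N_2$.
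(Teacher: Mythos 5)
Your proposal follows exactly the route the paper uses: certify the required molecule order $(7,8,5,8)$ via Proposition~\ref{prop:compactShearlets}, note that Remark~\ref{rem:consist} gives $(\tfrac12,k)$-consistency with the $SH$-parametrization for $k>3$, and then invoke Theorem~\ref{thm:videoapprox} on any dual frame. The numerical matching you outline ($N_1+N_2=13$, $N_1=5$ for the $\varepsilon$-direction cap, $M+L=15$ vanishing moments) is precisely the bookkeeping the paper relies on, so the proposal is correct and takes essentially the same approach.
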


This corollary is a new result for compactly supported shearlets on its own.
A similar result was proved in \cite{Kutyniok2012}. In comparison the most intriguing fact is the simplicity of its deduction:
The framework of $\alpha$-molecules enables a simple transport of the decay rates.

\begin{bem}
The 2-dimensional counterpart of Theorem~\ref{thm:videoapprox} is contained in \cite[Theorem 5.12]{GKKS2014} for the choice $\alpha=\frac{1}{2}$.
In \cite{GKKS2014} it is further shown that not only any system of bivariate $\frac{1}{2}$-shearlet molecules satisfies the conditions
of this theorem, provided that the order is sufficiently high, but also every system of sufficiently high order $\frac{1}{2}$-curvelet molecules.
This implies that also curvelet-like constructions yield almost optimal approximation for the class $\mathcal{E}^2(\R^2)$,
including the classical curvelet frame.

In contrast, a `true' curvelet construction in 3D is not known to the authors.
Note that, despite the misleading name, the construction of the `$3D$ discrete curvelet transform' in \cite{Ying2005} is actually shear-based.
Still, it can be expected that any curvelet-like system would fall into our framework, and thus Theorem~\ref{thm:videoapprox}
would immediately establish the almost optimal approximation rate.  
\end{bem}

%--------------------------------------------------------------------------------%

\section{Shearlet Systems in $d$ Dimensions} \label{sec:shearletmols}

We introduce a very general class of shear-based systems, namely
\emph{systems of $\alpha$-shearlet molecules}. The definition in $d$ dimensions is analogue
to the 2-dimensional case~\cite{GKKS2014}. Roughly speaking, they are shear-based systems obtained from variable generators, where similar to $\alpha$-molecules
the conditions on the generators have been relaxed to a mere time-frequency localization requirement.
The notion of $\alpha$-shearlet molecules comprises many specific shear-based constructions and simplifies the treatment of such systems within the general framework of $\alpha$-molecules.

\subsection{Multidimensional $\alpha$-Shearlet Molecules}

As explained in Section~\ref{sec:intro}, shearlet-like constructions are based on anisotropic scaling, shearings, and translations.
For the change of scale, we utilize $\alpha$-scaling as defined by \eqref{eq:alphamat2}.
The change of orientation is provided by shearings, in $d$ dimensions given by the shearing matrices
\begin{align*}
	S_{h}= \begin{pmatrix}
		I_{d-1} & 0 \\  h^T & 1
\end{pmatrix}
\quad\text{and}\quad
S^T_{h}= \begin{pmatrix}
		I_{d-1} & h  \\ 0 & 1
\end{pmatrix},\quad	h\in\R^{d-1},
\end{align*}
which are the natural generalizations of \eqref{eq:shearmat1}.
The matrix $S^T_{h}$ shears parallel to the $(e_1,\ldots,e_{d-1})$-plane and the
shear vector $h\in \R^{d-1}$ determines the direction of the shearing in this plane.
The transformations associated with shearings and $\alpha$-scalings naturally form a group~\cite{Dahlke_theuncertainty}.

To avoid directional bias, the frequency domain is divided into cone-like regions along the coordinate axes and a coarse-scale box for the low frequencies. Note that this comes at the cost
of losing the group properties mentioned above. This division procedure is however crucial for applications, and also, as the subsequent arguments will show, for including $\alpha$-shearlets in the concept of $\alpha$-molecules.
The cone-like regions along the $e_\varepsilon$-axes shall be called \emph{pyramids} and are explicitly given by
\begin{align*}
	\mathcal{P}_\varepsilon = \set{ (\xi_1, \dots, \xi_d)^T \in \R^d ~\vert~ \forall i\in\{1,\ldots,d\} : \abs{\xi_i} \leq \abs{\xi_\varepsilon} },
\end{align*}
where $\varepsilon \in \set{1,\ldots,d}$. $\varepsilon=0$ shall refer to a
coarse-scale box of the form $\mathcal{R}=\{ \xi\in\R^d : |\xi|_\infty\le C \}$, where $C>0$ is a suitably chosen constant. In the sequel we will always stay in this so-called cone-adapted setting. For an illustration of this specific setting in 3D,
we refer to Subsection~\ref{ssec:3Dshearlets}.

%\begin{figure}
%\centering
%\def\svgwidth{280pt}
%\input{drawing.pdf_tex}
%\caption{An illustration of the pyramid $\mathcal{P}_3$ together with the coarse-scale box $\mathcal{R}$ in the case $d=3$.}
%\end{figure}

In each cone we require different versions of the scaling and shearing operators.
The cyclic permutation matrix
\begin{align} \label{eq:cyclicPerm}
	Z=\begin{pmatrix}
		0 & 1 \\
		I_{d-1} & 0
	\end{pmatrix}
\end{align}
allows to elegantly define these operators associated with the respective cones by
$Z^\varepsilon S_{h} Z^{-\varepsilon}$ and $Z^\varepsilon A_{\alpha,s} Z^{-\varepsilon}$.

Before we come to the definition of $\alpha$-shearlet molecules, we need to introduce a set of characteristic parameters, associated with these systems.
The resolution of the underlying sampling grid is determined by the parameters $\sigma>1$, $\tau_1,\ldots,\tau_d>0$, and a sequence $\Theta=(\eta_j)_{j\in\N_0}\subset\R_+$.
The parameter $\sigma$ specifies the fineness of the scale sampling. The parameters $\tau_\varepsilon$, $\varepsilon\in\{1,\ldots,d\}$, determine
the spatial resolution in the $e_\varepsilon$-direction. For convenience they are summarized in the
diagonal matrix $\mathcal{T} :=\diag(\tau_1,\ldots,\tau_d)\in~\R^{d\times d}$. The angular resolution at each scale $j\in\N_0$ is given by the value $\eta_j$
of the sequence $\Theta$. Last but not least, in each cone $\varepsilon\in\{1,\ldots,d\}$ and at each scale $j\in\N_0$ the shearing parameter $\ell$ is restricted
to a set $\mathscr{L}_{\varepsilon,j}\subseteq \Z^{d-1}$. These sets are collected
in $\mathscr{L}:=\{ \mathscr{L}_{\varepsilon,j} : \varepsilon\in\{1,\ldots,d\},\, j\in\N_0 \}$.

After the introduction of this sampling data $\mathbb{D}:=\{\sigma,\Theta,\mathscr{L},\mathcal{T}\}$ we can now give the definition of a system of $\alpha$-shearlet molecules in $d$ dimensions, depending
on $\mathbb{D}$.
The scale-dependent step size $\eta_j$ of the directional sampling is assumed to satisfy $\eta_j \asymp \sigma^{-j(1-\alpha)}$ for $j\in\N_0$.
Further, we require the upper bounds
$\mathbb{L}_{j}:=\max \big\{ |\ell|_\infty : \ell\in\mathscr{L}_{\varepsilon,j}, \varepsilon\in\{1,\ldots,d\} \big\}$, $j\in\N_0$,
to fulfill the complementary condition $\mathbb{L}_{j} \lesssim \sigma^{j(1-\alpha)}$.
We remark that the translation parameters
$\tau_\varepsilon$ may also vary with the indices $(\varepsilon,j,\ell)$, as long as their values are restricted to some fixed interval ${[}\tau_{min},\tau_{max}{]}$
with $0<\tau_{min}\le\tau_{max}<\infty$. However, this is not indicated in the notation.

\begin{defi} \label{def:shearAlphaMol1}
Let $\alpha\in[0,1]$, $d\in\N$, $d \geq 2$, and $L,M,N_1,N_2\in\N_0\cup\{\infty\}$.
Further the sampling data $\mathbb{D}$ shall be given as above.
For $\varepsilon \in \set{1,\ldots,d}$, a system of functions
\[
\Sigma_\varepsilon:=\Big\{ m^\varepsilon_{j,\ell,k}\in L^2(\R^d) ~:~ (j,\ell,k)\in \Lambda^s_\varepsilon \Big\},
\]
indexed by the set
$
\Lambda^s_\varepsilon:=\big\{ (j,\ell,k) ~:~ j \in \N_0,\, \ell\in\mathscr{L}_{\varepsilon,j}\subseteq\Z^{d-1},\, k\in\Z^d \big\},
$
is called a \emph{system of $d$-dimensional $\alpha$-shearlet molecules of order $(L,M,N_1,N_2)$ associated with the orientation $\varepsilon$}, if it is of the form
\begin{align} \label{eq:sheargen}
	m_{j,\ell,k}^\varepsilon(x) = \sigma^{\frac{(1+\alpha(d-1))j}{2}} \gamma^\varepsilon_{j,\ell,k} \big( Z^{\varepsilon} A^j_{\alpha,\sigma}S_{\ell\eta_j} Z^{-\varepsilon} x - \mathcal{T} k \big)
\end{align}
with generating functions $\gamma^\varepsilon_{j,\ell,k}\in L^2(\R^d)$ satisfying for every $\rho\in\N_0^d$ with $|\rho|_1 \leq L$
\begin{align} \label{eq:shearcon}
	\abs{ \partial^{\rho} \hat{\gamma}^\varepsilon_{j,\ell,k}(\xi) } \lesssim \frac{\min\{1, \sigma^{-j} + \sigma^{-(1-\alpha)j}|Z^{-\varepsilon}\xi|_{[d-1]} + \abs{[Z^{-\varepsilon}\xi]_d} \}^M}{\angles{\abs{\xi}}^{N_1} \angles{\otnorm{Z^{-\varepsilon} \xi}}^{N_2}}.
\end{align}
The implicit constant is required to be uniform over $\Lambda^s_\varepsilon$.
If one of the parameters $L,M,N_1,N_2$ takes the value $\infty$, this shall mean that condition \eqref{eq:shearcon} is fulfilled with the respective quantity arbitrarily large.
\end{defi}

Combining systems of $\alpha$-shearlet molecules of order $(L,M,N_1,N_2)$ for each orientation $\varepsilon\in\set{1,\ldots,d}$ with a system of coarse-scale elements
\begin{align}\label{eq:coarsescalemol}
\Sigma_0:=\Big\{ m^0_{0,{\bf0},k}:= \gamma^0_{0,{\bf0},k}(\cdot - \mathcal{T} k) ~:~ k\in\Z^d \Big\},
\end{align}
where the generators $\gamma^0_{0,{\bf0},k}\in L^2(\R^d)$ fulfill
$| \partial^{\rho} \hat{\gamma}^0_{0,{\bf0},k}(\xi) | \lesssim \angles{\abs{\xi}}^{-N_1} \angles{\otnorm{\xi}}^{-N_2}$ for every $\rho\in\N_0^d$ with $|\rho|_1 \leq L$,
yields a \emph{system of $\alpha$-shearlet molecules} of order $(L,M,N_1,N_2)$. The associated index set is
\begin{align*}
\Lambda^s_0:=\Big\{ (0,{\bf0},k)  : k\in\Z^d \Big\} \subseteq  \N_0\times\Z^{d-1}\times\Z^d.
\end{align*}

\begin{defi}\label{def:shearAlphaMol2}
For each $\varepsilon\in\{1,\ldots,d\}$, let $\Sigma_\varepsilon$ be a system of $\alpha$-shearlet molecules of order $(L,M,N_1,N_2)$ associated with the respective orientation.
Further, let $\Sigma_0$ be a system of coarse-scale scale elements defined as in \eqref{eq:coarsescalemol}. Then the union
\[
\Sigma:= \bigcup_{\varepsilon=0}^d \Sigma_\varepsilon
\]
is called a \emph{system of $\alpha$-shearlet molecules of order $(L,M,N_1,N_2)$}.
The associated \emph{shearlet index set} is given by

$$ \Lambda^s=\{ (\varepsilon,j,\ell,k) :  \varepsilon\in\{0,\ldots,d\},\, (j,\ell,k)\in\Lambda^s_\varepsilon \}. $$

\end{defi}
\noindent
Next, we prove that the system $\Sigma$ is a system of $\alpha$-molecules.

\begin{theo} \label{thm:shearletmol}
Let $\alpha\in[0,1]$ and $\Sigma$ be a system of $\alpha$-shearlet molecules of order  $(L,M,N_1,N_2)$.
Then $\Sigma$ constitutes a system of $\alpha$-molecules of the same order.
The associated \emph{$\alpha$-shearlet parametrization}
$(\Lambda^s,\Phi^s)$ is given by the map $\Phi^s(\lambda)=(s_\lambda,e_\lambda,x_\lambda)\in\mathbb{P}_d$ for $\lambda=(\varepsilon,j,\ell,k)\in \Lambda^s$,
where
\begin{align}\label{eq:shearpara}
	s_\lambda= \sigma^j, \quad   e_{\lambda} = n_\lambda \cdot Z^{\varepsilon} \begin{pmatrix}
	\eta_j \ell \\ 1
	\end{pmatrix}, \quad x_{\lambda} = Z^{\varepsilon} S^{-1}_{\ell\eta_j}A^{-j}_{\alpha,\sigma}Z^{-\varepsilon} \mathcal{T} k,
\end{align}
and $n_\lambda = (1 + \eta_j^2\abs{\ell}_2^2)^{-1/2}$ is a normalization constant.
\end{theo}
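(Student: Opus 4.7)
The plan is to realize each shearlet molecule $m^\varepsilon_{j,\ell,k}$ explicitly in the form \eqref{eq:molgen} with $(s_\lambda,e_\lambda,x_\lambda)$ as in \eqref{eq:shearpara}, and then to verify that the induced generator $g_\lambda$ satisfies the $\alpha$-molecule decay condition \eqref{eq:molcon}. The coarse-scale case $\varepsilon=0$ is immediate: $s_\lambda=1$, $e_\lambda=e_d$, all rotation angles vanish, $A_{\alpha,1}=I$, and $\min\{1,1+|[\xi]_d|+\otnorm{\xi}\}^M=1$, so the required bound reduces exactly to the hypothesis on $\hat\gamma^0_{0,\mathbf{0},k}$. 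The remainder of the argument treats cone indices $\varepsilon\in\{1,\ldots,d\}$.

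\emph{Algebraic rewriting.} With $s_\lambda=\sigma^j$ the prefactors in \eqref{eq:molgen} and \eqref{eq:sheargen} already agree, and with $x_\lambda$ as in \eqref{eq:shearpara} one checks $Z^\varepsilon A^j_{\alpha,\sigma} S_{\ell\eta_j}Z^{-\varepsilon}x_\lambda=\mathcal{T}k$. Defining
\[
g_\lambda(y):=\gamma^\varepsilon_{j,\ell,k}(Q_\lambda y),\qquad Q_\lambda:=Z^\varepsilon A^j_{\alpha,\sigma}S_{\ell\eta_j}Z^{-\varepsilon}R^T_{\varphi_\lambda}R^T_{\theta_\lambda}A^{-j}_{\alpha,\sigma},
\]
turns \eqref{eq:molgen} into a direct identity. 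The decisive preparatory fact, extracted from \eqref{eq:angleparamet}, is that the orthogonal map $Z^{-\varepsilon}R^T_{\varphi_\lambda}R^T_{\theta_\lambda}$ sends $e_d$ to $n_\lambda(\eta_j\ell,1)^T=n_\lambda S^T_{\ell\eta_j}e_d$, so it aligns the distinguished axis with the shear axis up to the factor $n_\lambda\asymp 1$ (ensured by $\eta_j|\ell|_\infty\lesssim 1$). From this identity a basis-by-basis calculation yields $\|Q_\lambda\|_{2\to 2},\|Q_\lambda^{-1}\|_{2\to 2}\lesssim 1$ uniformly in $\lambda$, and hence $|\det Q_\lambda|\asymp 1$.

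\emph{Generator decay.} Passing to the Fourier side gives $\hat g_\lambda(\xi)=|\det Q_\lambda|^{-1}\hat\gamma^\varepsilon_{j,\ell,k}(\xi')$ with $\xi':=Q_\lambda^{-T}\xi$. Applying $\partial^\rho$ with $|\rho|_1\leq L$ via the chain rule expresses $\partial^\rho\hat g_\lambda(\xi)$ as a finite linear combination, with uniformly bounded coefficients (products of entries of $Q_\lambda^{-T}$), of partial derivatives of $\hat\gamma^\varepsilon_{j,\ell,k}$ of order $\leq L$ evaluated at $\xi'$. Substituting \eqref{eq:shearcon} reduces matters to establishing uniformly in $\lambda$ the comparabilities
\[
\langle|\xi'|\rangle\gtrsim\langle|\xi|\rangle,\qquad\langle\otnorm{Z^{-\varepsilon}\xi'}\rangle\gtrsim\langle\otnorm{\xi}\rangle,
\]
\[
\sigma^{-j}+\sigma^{-(1-\alpha)j}\otnorm{Z^{-\varepsilon}\xi'}+|[Z^{-\varepsilon}\xi']_d|\lesssim \sigma^{-j}+|[\xi]_d|+\sigma^{-(1-\alpha)j}\otnorm{\xi}.
\]
The first is immediate from $\|Q_\lambda\|_{2\to 2}\lesssim 1$; the other two require a careful analysis of $\xi'$.

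\emph{Main obstacle.} Tracing the operator chain for $Q_\lambda^{-T}$ gives
\[
Z^{-\varepsilon}\xi'=\bigl(w_{\mathrm{top}}-\eta_j\ell\,w_d,\;\sigma^{-(1-\alpha)j}w_d+n_\lambda[\xi]_d\bigr)^T,
\]
where $(w_{\mathrm{top}},w_d):=Z^{-\varepsilon}R^T_{\varphi_\lambda}R^T_{\theta_\lambda}(\xi-[\xi]_d e_d)$ is an isometric image of the component of $\xi$ orthogonal to $e_d$. The upper bound on the anisotropic weight then follows readily from $|w|=\otnorm{\xi}$, $n_\lambda\leq 1$, and $\eta_j|\ell|_\infty\lesssim 1$. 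The main technicality is the lower bound on $\otnorm{Z^{-\varepsilon}\xi'}$, which exploits the orthogonality $R^T_{\varphi_\lambda}R^T_{\theta_\lambda}(\xi-[\xi]_d e_d)\perp e_\lambda$; this translates into the identity $w_d=-\eta_j(w_{\mathrm{top}}\cdot\ell)$, after which
\[
w_{\mathrm{top}}-\eta_j\ell\,w_d=(I+\eta_j^2\ell\ell^T)w_{\mathrm{top}}
\]
is a symmetric positive-definite transformation with eigenvalues $\geq 1$. Hence $\otnorm{Z^{-\varepsilon}\xi'}\geq|w_{\mathrm{top}}|\asymp|w|=\otnorm{\xi}$ (using $|w_d|\lesssim|w_{\mathrm{top}}|$ from the same identity). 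Assembling these ingredients yields \eqref{eq:molcon}.
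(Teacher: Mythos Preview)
Your argument is correct and follows essentially the same line as the paper. Both proofs write the shearlet molecule in the form \eqref{eq:molgen}, identify the crucial fact that $Z^{-\varepsilon}R^T_{\varphi_\lambda}R^T_{\theta_\lambda}e_d=n_\lambda(\eta_j\ell,1)^T$ (the paper phrases this as $M_\lambda e_d=n_\lambda e_d$), and use it to show the conjugated transformation has uniformly bounded norm and preserves the three weight quantities $|\xi|$, $\otnorm{\xi}$, and the anisotropic numerator. The only substantive difference is in how the comparability $\otnorm{Z^{-\varepsilon}\xi'}\asymp\otnorm{\xi}$ is verified: the paper observes that the vanishing last column of $\widetilde M_\lambda$ (and of $\widetilde M_\lambda^{-1}$) makes $\otnorm{\widetilde M_\lambda\xi}$ depend only on $(\xi_1,\dots,\xi_{d-1})$, whence the estimate follows from the uniform operator bounds; you instead compute $\xi'$ explicitly and exploit the orthogonality relation $w_d=-\eta_j\langle w_{\mathrm{top}},\ell\rangle$ to recognise $(I+\eta_j^2\ell\ell^T)w_{\mathrm{top}}$. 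These are two faces of the same linear-algebraic fact. One minor presentational point: your claim $\|Q_\lambda\|_{2\to 2}\lesssim 1$ in the first paragraph already relies on the orthogonality identity you only spell out later (without it, $Q_\lambda e_i$ for $i<d$ would a priori pick up a factor $\sigma^{(1-\alpha)j}$); it would be cleaner to postpone that claim until after the explicit formula for $\xi'$.
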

In particular for $\varepsilon=0$ we have $s_\lambda=1$, $e_\lambda=e_d$ and $x_\lambda=\mathcal{T} k$ for every $\lambda=(0,0,{\bf0},k)\in\Lambda^s$.

\begin{proof}
Since a finite union of systems of $\alpha$-molecules is itself a system of $\alpha$-molecules,
we can prove this theorem separately for each system $\Sigma_\varepsilon$, $\varepsilon\in\{0,\ldots,d\}$.
For $\Sigma_0$ the statement is obvious.
For the other systems it suffices to give the proof for $\varepsilon=d$, since they are all related by a mere permutation of indices. We subsequently drop the index $\varepsilon$
to simplify the notation and note $Z^\varepsilon=I$ for $\varepsilon=d$.

    For the proof we introduce the index set $\Lambda^{s,d}:=\{(d,j,\ell,k)~:~ (j,\ell,k)\in \Lambda^s_d\}$.
	Let $\lambda=(d,j,\ell,k)\in\Lambda^{s,d}$ and $m_{\lambda}:=m^d_{j,\ell,k}$ the associated $\alpha$-shearlet molecule with corresponding generating function
    $\gamma_\lambda:=\gamma^d_{j,\ell,k}$.
    As usual we denote the angles representing the orientation $e_\lambda$ by $(\theta_\lambda,\varphi_\lambda)$, i.e.\ $e_\lambda=R^T_{\varphi_\lambda}R^T_{\theta_\lambda}e_d$.
    The molecule $m_\lambda$ can clearly be written in the form \eqref{eq:molgen} with respect to the generator
	\begin{align*}
		\gfunc (x) := \gamma_\lambda(  A^j_{\alpha,\sigma}S_{\ell\eta_j} R_{\varphi_\lambda}^TR_{\theta_\lambda}^T A^{-j}_{\alpha,\sigma} x ), \quad x\in\R^d.
	\end{align*}
    It remains to check condition \eqref{eq:molcon} for these functions. On the Fourier side we have
	\begin{align*}
		\gfuncfor( \xi) = \hat{\gamma}_\lambda( A^{-j}_{\alpha,\sigma}S_{\ell\eta_j}^{-T}  \rotmatrT{\lambda}A^j_{\alpha,\sigma} \xi),  \quad \xi\in\R^d.
	\end{align*}
	For $\lambda=(d,j,\ell,k)\in\Lambda^{s,d}$ let us first examine the matrix
    \begin{align}\label{eq:auxmatrix}
    M_\lambda:= S_{\ell\eta_j}^{-T} \rotmatrT{\lambda}.
    \end{align}
    A simple calculation shows
    $
    M_\lambda e_d=S_{\ell\eta_j}^{-T} \rotmatrT{\lambda} e_d = S_{\ell\eta_j}^{-T} e_\lambda = S_{\ell\eta_j}^{-T} n_\lambda (\eta_j\ell, 1)^T = n_\lambda e_d.
    $
    Hence, the entries of the last column of $M_\lambda$ vanish except for the last one.
    Next, we prove the uniform boundedness of the set of operators $\{ M_\lambda \}_{\lambda\in\Lambda^{s,d}}$. It holds uniformly for $\lambda\in\Lambda^{s,d}$
    \[
    \|M_\lambda\|_{2\rightarrow 2} = \| S_{\ell\eta_j}^{-T} \|_{2\rightarrow 2} =\sqrt{1 + \eta_j^2|\ell|^2_2} \lesssim  \sqrt{1 + \eta_j^2 \mathbb{L}_j^2} \lesssim 1.
    \]
    Note that this implies that each entry in $M_\lambda$ is bounded in modulus.
     Since similar considerations hold for the inverse
    $
    M^{-1}_\lambda= R_{\theta_\lambda}R_{\varphi_\lambda} S_{\ell\eta_j}^{T},
    $  we can conclude that both $\widetilde{M}_\lambda := A^{-j}_{\alpha,\sigma} M_\lambda A^j_{\alpha,\sigma}$ and its inverse $\widetilde{M}^{-1}_\lambda$ have the form
	\begin{align*}
		\begin{pmatrix}
		 * & \dots &* & 0\\
		 \vdots & \ddots & \vdots & \vdots \\
		 * & \dots & * &0 \\
		  \square & \dots & \square & *
		\end{pmatrix},
	\end{align*}
	where the entries $*$ are the same as in $M_\lambda$ (or $M^{-1}_\lambda$) and the entries $\square$ are of the form
    $\sigma^{-j(1-\alpha)}[M_{\lambda}e_i]_{d}$ (or $\sigma^{-j(1-\alpha)}[M^{-1}_{\lambda}e_i]_{d}$) for $i\in\{1,\ldots,d-1\}$. In particular, the
    entries of $\widetilde{M}_\lambda$ and $\widetilde{M}^{-1}_\lambda$ are uniformly bounded in modulus. This implies
    $\|\widetilde{M}_\lambda\|_{2\rightarrow 2}\lesssim 1 $ and  $\|\widetilde{M}^{-1}_\lambda\|_{2\rightarrow 2}\lesssim 1$.
    Altogether, we obtain
    \begin{align}\label{eq:shmolaux1}
    |\widetilde{M}_\lambda\xi| \asymp |\xi| \text{ uniformly for $\xi\in\R^d$ and }\lambda\in\Lambda^{s,d}.
	\end{align}
    Due to the structure of the last column of $\widetilde{M}_\lambda$ we further have for $\xi=(\xi_1,\ldots,\xi_d)^T\in\R^d$
    \begin{align*}
	|\widetilde{M}_\lambda \xi|_{[d-1]} = |\widetilde{M}_\lambda (\xi_1,\ldots,\xi_{d-1},0)^T|_{[d-1]}
    \le \| \widetilde{M}_\lambda \|_{2\rightarrow 2} |(\xi_1,\ldots,\xi_{d-1},0)^T | = \| \widetilde{M}_\lambda \|_{2\rightarrow 2} |\xi|_{[d-1]}.
	\end{align*}
	For the inverse $\widetilde{M}^{-1}$ it holds analogously $|\widetilde{M}^{-1}_\lambda\xi|_{[d-1]} \le \| \widetilde{M}^{-1}_\lambda \|_{2\rightarrow 2} \otnorm{\xi}$.
    We conclude
    \begin{align}\label{eq:shmolaux2}
    |\widetilde{M}_\lambda \xi|_{[d-1]} \asymp \otnorm{\xi} \text{ uniformly for $\xi\in\R^d$ and }\lambda\in\Lambda^{s,d}.
    \end{align}
	 Finally, the following estimate holds uniformly for $\xi=(\xi_1,\ldots,\xi_d)^T\in\R^d$ and $\lambda\in\Lambda^{s,d}$,
     \begin{align}\label{eq:shmolaux3}
     |[\widetilde{M}_\lambda\xi]_d| \le |[M_{\lambda}]^{d}_{d}| |\xi_d| + \sum_{i=1}^{d-1} \sigma^{-j(1-\alpha)} |[M_{\lambda}]^{i}_{d}| |\xi_i|  \lesssim \sigma^{-(1-\alpha)j}\otnorm{\xi} + \abs{\xi_d}.
     \end{align}
	Finally, we can prove \eqref{eq:molcon} for every $\rho\in\N_0^d$ with $|\rho|_1\le L$,
	\begin{align*}
		\abs{\partial^{\rho} \gfuncfor(\xi)}
     \lesssim \sup_{|\beta|_1\le L}  \abs{ \big( \partial^{\beta} \hat{\gamma}_\lambda \big) (\widetilde{M}_\lambda \xi)}
     &\lesssim \frac{\min\{1, \sigma^{-j} +  \sigma^{-(1-\alpha)j}|\widetilde{M}_\lambda\xi|_{[d-1]}
       + |[\widetilde{M}_\lambda\xi]_d| \}^M}{\angles{|\widetilde{M}_\lambda\xi|}^{N_1} \angles{|\widetilde{M}_\lambda\xi|_{[d-1]}}^{N_2}} \\
	 &\lesssim \frac{\min\{1, \sigma^{-j} + \sigma^{-(1-\alpha)j}\otnorm{\xi} +\abs{[\xi]_d}\}^M}{\angles{\abs{\xi}}^{N_1} \angles{\otnorm{\xi}}^{N_2}}.
	\end{align*}
	The first estimate holds true, since $\hat{g}_{\lambda}(\xi)=\hat{\gamma}_\lambda(\widetilde{M}_\lambda \xi)$ and the entries of $\widetilde{M}_\lambda$ are uniformly bounded in $\lambda$.
	The second estimate is due to \eqref{eq:shearcon}. For the last estimate we used \eqref{eq:shmolaux1}, \eqref{eq:shmolaux2}, and \eqref{eq:shmolaux3}. The observation $s_\lambda =\sigma^{j}$ finishes the proof.
\end{proof}

\subsection{Consistency of the Shearlet Parametrizations}

The properties of $\alpha$-molecules depend essentially on their parametrizations. In view of Theorem~\ref{thm:sparseeqivmol} the consistency is of particular interest
when investigating approximation properties. In this paragraph we shall prove, in Proposition~\ref{prop:shconsist}, that shearlet parametrizations are consistent with each other.
This allows to establish approximation rates for various shearlet-like constructions simultaneously,
as long as they fall under the umbrella of the shearlet-molecule concept.

\begin{lemm} \label{pushToPlane}
Consider the \emph{gnomonic projection} $\phi : \R^d \backslash \set{ x\in\R^d \ \vert \ [x]_d =0} \to \R^d, x \mapsto \frac{1}{[x]_d}x$,
and let $1 \geq c>0$ be fixed. For $v, w \in \mathbb{S}^{d-1} \cap \set{ x\in\R^d : [x]_d \geq c}$ we then have
$|\phi(v) - \phi(w)| \asymp |v-w|$ and $\otnorm{v-w} \asymp  |v - w|$.
\end{lemm}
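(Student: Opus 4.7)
The plan is to prove the two equivalences by direct computation, exploiting the constraints $|v|=|w|=1$ and $[v]_d,[w]_d\geq c$. Neither estimate is deep; the argument is essentially book-keeping with the sphere relation, and the only delicate step is extracting a clean algebraic identity for $|\phi(v)-\phi(w)|^2$ that makes both directions fall out simultaneously.

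For the equivalence $\otnorm{v-w}\asymp|v-w|$, the inequality $\otnorm{v-w}\leq|v-w|$ is immediate from the definition of $\otnorm{\cdot}$. For the reverse direction, I would use that $|v|^2=|w|^2=1$ gives $[v]_d^2-[w]_d^2=\sum_{i=1}^{d-1}([w]_i^2-[v]_i^2)$. Factoring yields $([v]_d-[w]_d)([v]_d+[w]_d)=\sum_{i=1}^{d-1}([w]_i-[v]_i)([w]_i+[v]_i)$. Cauchy--Schwarz on the right-hand side, combined with the lower bound $[v]_d+[w]_d\geq 2c$ on the left and the trivial bound $|v+w|\leq 2$, gives $|[v]_d-[w]_d|\lesssim_c \otnorm{v-w}$. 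Then $|v-w|^2=\otnorm{v-w}^2+|[v-w]_d|^2\lesssim_c\otnorm{v-w}^2$, as desired.

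For $|\phi(v)-\phi(w)|\asymp|v-w|$, my plan is to derive the identity
\begin{equation*}
|\phi(v)-\phi(w)|^2 = \frac{([v]_d-[w]_d)^2}{[v]_d^2[w]_d^2}+\frac{|v-w|^2}{[v]_d[w]_d}
\end{equation*}
by expanding $|\phi(v)|^2+|\phi(w)|^2-2\langle\phi(v),\phi(w)\rangle$ using $|\phi(x)|^2=1/[x]_d^2$ and $\langle\phi(v),\phi(w)\rangle=\langle v,w\rangle/([v]_d[w]_d)$, and substituting $\langle v,w\rangle=1-|v-w|^2/2$. Once this identity is in hand, the lower bound is immediate since the second term alone is $\geq|v-w|^2$ (using $[v]_d[w]_d\leq 1$), and the upper bound follows from $|[v]_d-[w]_d|\leq|v-w|$ together with $[v]_d[w]_d\geq c^2$, yielding $|\phi(v)-\phi(w)|^2\leq(c^{-4}+c^{-2})|v-w|^2$.

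The only step that requires any care is spotting that the cross term in $|\phi(v)-\phi(w)|^2$ rearranges into precisely the clean expression above; once that identity is written down both bounds are one-liners. An alternative route would invoke compactness of the spherical cap $\mathbb{S}^{d-1}\cap\{[x]_d\geq c\}$ and the smoothness of $\phi$ to conclude bi-Lipschitz behavior, but the explicit identity is preferable since it gives quantitative constants depending only on $c$.
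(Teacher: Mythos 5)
Your proposal is correct, and it takes a genuinely different route from the paper's own proof. The paper disposes of both equivalences with a soft argument: it observes that $\otnorm{v-w} = |\pi(v)-\pi(w)|$ where $\pi$ is the orthogonal projection onto the $(e_1,\ldots,e_{d-1})$-plane, and then notes that both $\pi$ and the gnomonic projection $\phi$ restrict to diffeomorphisms with bounded derivatives (in both directions) on the compact spherical cap $\mathbb{S}^{d-1} \cap \{[x]_d \geq c\}$, whence both are bi-Lipschitz. That is essentially the ``alternative route'' you flag at the end as a one-liner. Your argument instead computes everything explicitly: the Cauchy--Schwarz manipulation with the sphere constraint $|v|=|w|=1$ gives $|[v]_d - [w]_d| \leq c^{-1}\otnorm{v-w}$, which handles $\otnorm{v-w}\asymp|v-w|$; and the identity
\[
|\phi(v)-\phi(w)|^2 = \frac{([v]_d-[w]_d)^2}{[v]_d^2[w]_d^2} + \frac{|v-w|^2}{[v]_d[w]_d}
\]
(which I verified by expanding $|\phi(v)|^2 + |\phi(w)|^2 - 2\langle\phi(v),\phi(w)\rangle$ and using $\langle v,w\rangle = 1 - |v-w|^2/2$) gives both directions of $|\phi(v)-\phi(w)|\asymp|v-w|$ with the constants $1$ and $c^{-4}+c^{-2}$. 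What the paper's approach buys is brevity; what yours buys is fully explicit constants depending only on $c$, with no appeal to compactness or smoothness of inverses. Both are valid, and yours is the more elementary and self-contained of the two.
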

\begin{proof}
First note that  $\otnorm{v-w}=|\pi(v)-\pi(w)|$, where $\pi$ is the orthogonal projection of $\R^d$
onto the $(e_1,\ldots,e_{d-1})$-plane. On the set $\mathbb{S}^{d-1} \cap \set{ x\in\R^d : [x]_d \geq c}$, the mappings $\phi$ and $\pi$ are diffeomorphisms with bounded derivatives in both directions.
This implies the statement.
\end{proof}

In order to apply the previous lemma it is useful to record the following observation.

\begin{lemm} \label{PointSchummel}
	Let $1\ge c>0$ be fixed and let $w=(w_1,\ldots,w_d)^T\in \sph^{d-1}$ be a vector such that $w_d < c$.
    Then there exists a point $\tilde{w}=(\tilde{w}_1,\ldots,\tilde{w}_d)^T \in \sph^{d-1}$ with $\tilde{w}_d \geq c$ such that
    \begin{align*}
     |\tilde{w} - v| \leq |w- v| \qquad \text{ for every }\quad v\in \mathbb{S}^{d-1} \cap \set{ x\in\R^d : [x]_d \geq c}.
	\end{align*}
\end{lemm}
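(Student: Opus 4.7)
The plan is to construct $\tilde w$ explicitly, exploiting the rotational symmetry of the cap $C := \sph^{d-1} \cap \set{x\in\R^d : [x]_d \ge c}$ around the $e_d$-axis. Write $w' := ([w]_1,\ldots,[w]_{d-1},0)^T$ and, when $w' \ne 0$, set $n := w'/|w'|$; then $w$ lies in the two-dimensional plane $P := \spn(n,e_d)$, and I will look for $\tilde w$ in the same plane. Parameterizing $P \cap \sph^{d-1}$ by the polar angle from $e_d$, write $w = \sin\phi_w \cdot n + \cos\phi_w \cdot e_d$ with $\phi_w = \arccos([w]_d)$, and set $a := \arccos c \in [0,\pi/2)$. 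The hypothesis $[w]_d < c$ then reads $\phi_w > a$, while $[\tilde w]_d \ge c$ is equivalent to $|\phi_{\tilde w}| \le a$. The degenerate case $w = -e_d$ (i.e., $w' = 0$) is handled separately by the direct choice $\tilde w := e_d$, for which one checks $|e_d - v|^2 = 2(1-[v]_d) \le 2(1+[v]_d) = |w-v|^2$.

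In the non-degenerate case, the candidate I propose is
\[
\tilde w := \sin\phi_{\tilde w} \cdot n + \cos\phi_{\tilde w} \cdot e_d, \qquad \phi_{\tilde w} := \max\{-a,\ 2a - \phi_w\}.
\]
By construction $\phi_{\tilde w} \in [-a,a)$, so $\tilde w \in \sph^{d-1}$ with $[\tilde w]_d = \cos\phi_{\tilde w} \ge \cos a = c$. Since all three points $w, \tilde w, v$ lie on the unit sphere, the claimed inequality $|\tilde w - v| \le |w - v|$ is equivalent to $\langle \tilde w - w, v\rangle \ge 0$ for every $v \in C$, and this is what I will verify.

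The key step is a sum-to-product factorization of $\tilde w - w$, split into two sub-cases. In the \emph{unsaturated} sub-case $\phi_w \le 3a$ (so $\phi_{\tilde w} = 2a - \phi_w$ and $\phi_{\tilde w} + \phi_w = 2a$) a short computation gives
\[
\tilde w - w = 2\sin(\phi_w - a)\,\bigl(\sqrt{1-c^2}\,e_d - c\, n\bigr).
\]
The scalar $2\sin(\phi_w - a)$ is nonnegative (since $\phi_w - a \in (0,2a)\subset(0,\pi)$), and for the unit vector $u := \sqrt{1-c^2}\,e_d - c\, n$ one has $\langle u, v\rangle \ge \sqrt{1-c^2}\,[v]_d - c\sqrt{1-[v]_d^2}$; viewed as a function of $[v]_d \in [c,1]$ this bound vanishes at $[v]_d = c$ and has strictly positive derivative, hence is nonnegative on $[c,1]$. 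In the \emph{saturated} sub-case $\phi_w > 3a$ (which forces $a < \pi/3$, i.e., $c > 1/2$), the analogous factorization reads
\[
\tilde w - w = 2\sin\!\Big(\tfrac{\phi_w+a}{2}\Big)\Bigl(\sin\!\big(\tfrac{\phi_w-a}{2}\big)\,e_d - \cos\!\big(\tfrac{\phi_w-a}{2}\big)\,n\Bigr),
\]
and the nonnegativity of $\langle \tilde w - w, v\rangle$ reduces, after squaring, to $[v]_d \ge \cos(\tfrac{\phi_w-a}{2})$, which holds on $C$ precisely because $(\phi_w-a)/2 \ge a$ yields $\cos((\phi_w-a)/2) \le \cos a = c \le [v]_d$.

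The main obstacle is that no single formula for $\tilde w$ works for every position of $w$: the natural "reflect across angle $a$" choice $\phi_{\tilde w} = 2a - \phi_w$ leaves the cap once $\phi_w > 3a$, forcing a clip at $\phi_{\tilde w} = -a$. This creates the two-sub-case split above; the required inequality must be re-derived for the clipped regime, where it relies crucially on the bound $c > 1/2$ that is automatically implied by the saturation condition $3a < \pi$. Reassuringly, at the seam $\phi_w = 3a$ both formulas coincide ($\phi_{\tilde w} = -a$), so the construction joins continuously and no additional boundary argument is needed.
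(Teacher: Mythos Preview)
Your proof is correct. Both you and the paper reduce to the two–dimensional plane $P=\spn(n,e_d)$ containing $w$ and $e_d$, pick a specific $\tilde w\in P$ on the cap, and verify $\langle\tilde w-w,v\rangle\ge 0$ via Cauchy--Schwarz on the component $\langle n,v\rangle$ followed by an elementary angle computation. The genuine difference is the \emph{choice} of $\tilde w$. The paper splits according to the sign of $w_d+c$: for $w_d\le -c$ it simply reflects $w$ across the equatorial hyperplane (a one-line verification), while for $|w_d|<c$ it takes $\tilde w$ to be the cap-boundary point at angle $a$ on the same side as $w$, reducing the inequality to $\cos(\theta_c-\theta_v)\ge\cos(\theta_w-\theta_v)$. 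You instead reflect $\phi_w$ across the boundary angle $a$ and clip at $-a$, which produces a single formula $\phi_{\tilde w}=\max\{-a,\,2a-\phi_w\}$ but forces a second computation in the saturated regime $\phi_w>3a$. What the paper's split buys is that each case is shorter (no clipping, no derived constraint like $c>\tfrac12$); what your formulation buys is a more uniform description of $\tilde w$ and a clean sum-to-product factorization of $\tilde w-w$. Either construction suffices, and the verification machinery is essentially identical.
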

\begin{proof}
	If $w_d \leq - c$ simply take $\tilde{w}$ to be the reflection of $w$ at the $(e_1,\ldots,e_{d-1})$-plane.
    Then we have $\tilde{w}_d \ge c >0$ and we can conclude for every $v=(v_1,\ldots,v_d)^T \in \sph^{d-1}$ with $v_d \geq c>0$
    \[
    |v-w|^2=\otnorm{v - w}^2 + |v_d-w_d|^2 = \otnorm{v-\tilde{w}}^2 + |v_d+ \tilde{w}_d|^2 \ge \otnorm{v-\tilde{w}}^2 + |v_d-\tilde{w}_d|^2 =  |v-\tilde{w}|^2.
    \]
	In the other case $|w_d| < c$ we argue as follows. Applying a rotation about the $e_d$-axis, we may assume
    that $w$ is of the form $[\sqrt{1-w_d^2}, 0,\ldots  0, w_d]^T$. The vector
    $\tilde{w} =[\sqrt{1-c^2}, 0, \ldots, 0,  c]^T$ then has the desired properties.
    To verify this it suffices to show $\langle \tilde{w},v \rangle \ge \langle w,v \rangle$, because then
	\begin{align*}
		|\tilde{w} - v |^2 = |\tilde{w}|^2 + |v|^2 - 2 \langle \tilde{w},v \rangle = 2 - 2 \langle \tilde{w},v \rangle
     \leq  |w|^2 + |v|^2 - 2 \langle w,v \rangle = |w-v|^2.
	\end{align*}

    In order to show  $0\le \langle \tilde{w} - w,v \rangle = v_1( \sqrt{1-c^2} - \sqrt{1-w_d^2} ) + v_d(c-w_d)$, we first observe that
    $\sqrt{1-c^2}< \sqrt{1-w_d^2}$ since $|w_d|<c$. Moreover, every $v\in \mathbb{S}^{d-1} \cap \set{ x\in\R^d : [x]_d \geq c}$
    satisfies $|v_1|\le \sqrt{1-v_d^2}$. It follows
    \begin{align*}
      v_1\Big( \sqrt{1-c^2} - \sqrt{1-w_d^2}\Big) + v_d(c-w_d) %\ge |v_1| ( \sqrt{1-c^2} - \sqrt{1-w_d^2} ) + v_d(c-w_d)
      \ge \sqrt{1-v_d^2} \Big( \sqrt{1-c^2} - \sqrt{1-w_d^2} \Big) + v_d(c-w_d).
    \end{align*}

    Hence, it just remains to prove the inequality $\sqrt{1-v_d^2} \big( \sqrt{1-c^2} - \sqrt{1-w_d^2} \big) + v_d(c-w_d) \ge 0$ under the condition $0\le|w_d|<c\le v_d\le1$.
    The associated angles $\theta_w,\theta_c,\theta_v\in[0,\pi]$ defined by $\cos \theta_w=w_d$, $\cos \theta_c = c$, and $\cos\theta_v=v_d$, satisfy
    $\pi \ge \theta_w \ge \theta_c \ge \theta_v \ge 0$, and the inequality reads as
    \[
    \cos(\theta_c -\theta_v)= \langle (\sin \theta_v,\cos \theta_v) , (\sin \theta_c,\cos \theta_c)  \rangle \ge \langle (\sin \theta_v,\cos \theta_v) , (\sin \theta_w,\cos \theta_w)  \rangle = \cos(\theta_w -\theta_v).
    \]
    This however is obviously true since $0\le \theta_c-\theta_v \le \theta_w-\theta_v \le \pi$.
\end{proof}

After this preparation we are in the position to prove the consistency.

\begin{prop}\label{prop:shconsist}
Let $\alpha\in[0,1]$ and assume that $(\Lambda,\Phi_\Lambda)$ and $(\Delta,\Phi_\Delta)$ are
$\alpha$-shearlet parametrizations (possibly with different parameters).
Then $(\Lambda,\Phi_\Lambda)$ and $(\Delta,\Phi_\Delta)$ are $(\alpha,k)$-consistent for every $k>d$.
\end{prop}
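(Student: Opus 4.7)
My plan is to reduce the sum in \eqref{eq:consCond} to three nested one-dimensional sums---over scales $j'$, shears $\ell'$, and translations $k'$---and to show that the threshold $k>d$ emerges as the balance point of their three contributions. By Remark~\ref{rem:consCond} it suffices to control $\sup_{\lambda\in\Lambda}\sum_{\mu\in\Delta}\omega_\alpha(\lambda,\mu)^{-k}$. Fix $\lambda\in\Lambda$ with pyramid index $\varepsilon$ and scale $j$, and decompose $\Delta=\bigsqcup_{\varepsilon'=0}^{d}\Delta_{\varepsilon'}$ pyramid-wise; the coarse block $\Delta_0$ is immediately finite (translation lattice at a single scale). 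For $\varepsilon'\in\{1,\dots,d\}$ I would parametrise $\mu=(\varepsilon',j',\ell',k')$ and factor $\omega_\alpha^{-k}=\max(s_\lambda/s_\mu,s_\mu/s_\lambda)^{-k}(1+d_\alpha)^{-k}$, extracting geometric decay in $|j-j'|$ from the scale ratio to reduce matters to a $j'$-uniform bound on $\sum_{\ell',k'}(1+d_\alpha(\lambda,\mu))^{-k}$.

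For fixed $j'\geq j$ (the case $j'<j$ being easier since the upcoming lattices then have uniformly bounded covolume) I would introduce the change of variables $z=A_{\alpha,s_0}R_{\theta_\lambda}R_{\varphi_\lambda}(x_\lambda-x_\mu)$ with $s_0=\min(s_\lambda,s_\mu)$. A direct calculation gives
\[
s_0^{2\alpha}|x_\lambda-x_\mu|^2+s_0|\sprod{e_\lambda,x_\lambda-x_\mu}|=\otnorm{z}^2+s_0^{2(\alpha-1)}|[z]_d|^2+|[z]_d|,
\]
which I would bound below by $\otnorm{z}^2+|[z]_d|$ (using $s_0\geq 1$ for shearlet parametrizations to drop the middle term). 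The points $z$ trace an affine lattice $L_{\ell'}$ whose covolume scales like $s_0^{1+\alpha(d-1)}\sigma'^{-j'(1+\alpha(d-1))}$ independently of $\ell'$; bounding the inner $k'$-sum by the corresponding integral and rescaling $z_{[d-1]}\mapsto(1+A_{\ell'})^{1/2}u$, $[z]_d\mapsto(1+A_{\ell'})v$ yields, up to the covolume factor, an estimate of the form $(1+A_{\ell'})^{(d+1)/2-k}$, finite provided $k>(d+1)/2$. Here $A_{\ell'}=s_0^{2(1-\alpha)}\abspi{d_\sph(e_\lambda,e_\mu)}^2$ denotes the $k'$-constant angular part of $d_\alpha$.

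To execute the outer $\ell'$-sum I would invoke Lemma~\ref{pushToPlane} to translate $A_{\ell'}$ via the gnomonic projection into $\asymp\sigma^{-2(j'-j)(1-\alpha)}|\ell'-\ell_0|^2$ for a suitable $\ell_0\in\R^{d-1}$, using Lemma~\ref{PointSchummel} in the cross-pyramid case $\varepsilon'\ne\varepsilon$ to move $e_\lambda$ into the $\varepsilon'$-pyramid without loss. The resulting Euclidean lattice sum on $\Z^{d-1}$ is then comparable to $\sigma^{(j'-j)(1-\alpha)(d-1)}$ precisely when $k>d$. Multiplying the covolume factor $\sigma^{(j'-j)(1+\alpha(d-1))}$, the shear-sum factor $\sigma^{(j'-j)(1-\alpha)(d-1)}$, and the scale factor $\sigma^{-k(j'-j)}$ yields an overall bound of $\sigma^{-(j'-j)(k-d)}$---the exponent $d=1+\alpha(d-1)+(1-\alpha)(d-1)$ arising as the sum of the lattice-density and angular-density exponents---which is summable in $j'\geq j$ exactly for $k>d$.

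The main technical hurdle will be ensuring uniformity of constants across all parameter combinations $(\varepsilon,\varepsilon',j,j',\ell,\ell')$. The cross-pyramid estimates depend essentially on Lemma~\ref{PointSchummel}, and the complementary condition $\mathbb{L}_{j'}\lesssim\sigma^{j'(1-\alpha)}$ from Definition~\ref{def:shearAlphaMol1} is needed to guarantee that the shearing matrices entering the change of variables stay operator-norm bounded, so that the lattice-to-integral comparison is genuinely uniform in $\ell'$. Careful bookkeeping of these uniform bounds---rather than any of the individual computations above---will almost certainly be the longest part of the proof.
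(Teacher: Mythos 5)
Your proposal is correct and follows essentially the same route as the paper's proof: the same reduction via Remark~\ref{rem:consCond}, the same pyramid-wise decomposition, extraction of geometric decay $\sigma^{-k|j-j'|}$ from the scale ratio, use of Lemmas~\ref{pushToPlane} and~\ref{PointSchummel} for the angular term, and a Riemann-sum-to-integral comparison producing the threshold $k>d$ from the same balance $d=1+\alpha(d-1)+(1-\alpha)(d-1)$. The only differences are cosmetic: you fix $\lambda$ and sum over $\mu$ rather than the reverse, and you package the spatial terms via a single change of variables $z=A_{\alpha,s_0}R_{\theta_\lambda}R_{\varphi_\lambda}(x_\lambda-x_\mu)$ with an explicit covolume count, which makes the exponent bookkeeping a little more transparent than the paper's term-by-term matrix estimates but is not a genuinely different argument.
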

\begin{proof}
As already was noted in Remark~\ref{rem:consCond},
it suffices to prove that for $N>d$ it holds
\begin{align*}
	 \sup_{\mu \in \Delta}\sum_{\lambda \in \Lambda} \omega_{\alpha}(\lambda, \mu)^{-N} < \infty.
\end{align*}
For this task it is convenient to decompose the shearlet index set $\Lambda =  \Lambda_0 \cup \cdots \cup \Lambda_d$ into the sets $\Lambda_\varepsilon$ associated with
the respective pyramidal regions $\mathcal{P}_\varepsilon$ for $\varepsilon\in\{1,\ldots,d\}$ and the low-frequency box $\mathcal{R}$ for $\varepsilon=0$.
The sum then splits accordingly into $d+1$ parts, which we handle separately below.

\paragraph*{\underline{$\Lambda_0$:}}

Let $\mu\in\Delta$ and $\lambda=(0,0,{\bf0},k)\in\Lambda_0$ with $k\in\Z^d$. The shearlet parametrization \eqref{eq:shearpara}
yields $s_\lambda=1$, $e_\lambda =e_d$, and $x_\lambda =\mathcal{T} k$. Furthermore, $s_\mu\ge 1$ for all $\mu\in\Lambda$. Hence we have
\[
\omega_{\alpha}(\lambda, \mu)
= s_\mu( 1 +  \abs{ \mathcal{T} k- x_\mu}^2 + \abspi{d_\sph(e_d, e_\mu)}^2 + \vert \sprod{e_d, \mathcal{T} k-x_\mu}\vert) \geq s_\mu( 1 + \abs{ \mathcal{T} k-x_\mu}^2).
\]
We conclude
\begin{align*}
	\sum_{\lambda \in \Lambda_0} \omega_{\alpha}(\lambda, \mu)^{-N}\leq \sum_{k\in\Z^d}  s_\mu^{-N} (1 + \abs{\mathcal{T} k-x_\mu}^2)^{-N} \lesssim \sum_{k\in\Z^d}  (1 + \abs{k}^2)^{-N},
\end{align*}
where for $N>d/2$ the sum on the right converges.

\paragraph*{\underline{$\Lambda_\varepsilon$, $\varepsilon\in\{1,\ldots,d\}$:}}

We only deal with the special case $\varepsilon=d$, since the other cases can be transformed to this case via rotations.
Let $\mu\in\Delta$ and write $s_{\mu}=\sigma^{j'}$ with $j^\prime\in\R$. In view of $s_{\lambda} = \sigma^j$ for $\lambda=(d,j,\ell,k)\in\Lambda_d$ we then have
\begin{align*}
	\sum_{\lambda \in \Lambda_d} \omega_{\alpha}(\lambda, \mu)^{-N} = \sum_{j\in\N_0} \sigma^{-N\abs{j-j'}} \sum_{\stackrel{\lambda \in \Lambda_d}{s_{\lambda} = \sigma^j}} \left(1 + d_\alpha(\lambda,\mu)\right)^{-N}.
\end{align*}
If we can prove that
\begin{align}\label{eq:sumest}
\mathcal{S}:=\sum_{\stackrel{\lambda \in \Lambda_d}{s_{\lambda} = \sigma^j}} \left(1 + d_\alpha({\lambda}, \mu)\right)^{-N}
	\lesssim %
    \sigma^{d|j-j^\prime|},
\end{align}
independently of $j\in\N_0$ and $\mu\in\Delta$, we are done, since $\sigma>1$, $s_\mu=\sigma^{j^\prime}$, $\max\{ s_\lambda/s_\mu , s_\mu/s_\lambda \}=\sigma^{|j-j^\prime|}$ and thus if $N>d$
\[
\sum_{\lambda\in \Lambda_d} \omega_{\alpha}(\lambda, \mu)^{-N} \lesssim \sum_{j \in \N_0} \sigma^{(d-N)\abs{j-j'}}
\leq 2 \sum_{j\in\N_0} \sigma^{(d-N)j} = \frac{2}{1-\sigma^{d-N}} < \infty.
\]
Putting in the definition of $d_\alpha(\lambda,\mu)$ and abbreviating $j_0:=\min\{j,j^\prime\}$, the sum $\mathcal{S}$ becomes
\begin{align}\label{eq:sum}
\mathcal{S}=\sum_{  \stackrel{\lambda \in \Lambda_d}{s_{\lambda}= \sigma^j}}
\left(1 + \sigma^{2\alpha j_0}\abs{x_\lambda - x_\mu}^2 + \sigma^{2(1-\alpha)j_0}\abspi{d_\sph(e_\lambda, e_\mu)}^2 + \sigma^{j_0}\abs{\sprod{e_\lambda, x_\lambda - x_\mu}} \right)^{-N}.
\end{align}

In order to prove the estimate \eqref{eq:sumest} for $\mathcal{S}$, we first study the different terms of the summand independently.
Let $\lambda=(d,j,\ell,k)\in\Lambda_d$ and recall the matrix $M_\lambda$ from \eqref{eq:auxmatrix}. It holds
\[
M^T_\lambda=R_{\theta_\lambda}R_{\varphi_\lambda}S^{-1}_{\ell\eta_j},
\]
and -- according to the discussion of $M_\lambda$ in the proof of Theorem~\ref{thm:shearletmol} -- its last row is given by $(0,\ldots,0,n_\lambda)$
with $n_\lambda = (1 + \eta_j^2\abs{\ell}_2^2)^{-\frac{1}{2}}$. Since $\eta_j\asymp \sigma^{-j(1-\alpha)}$ and $|\ell|_2 \lesssim \mathbb{L}_j \lesssim  \sigma^{j(1-\alpha)}$
this implies $n_\lambda\asymp 1$ uniformly for all $\lambda\in\Lambda_d$.

As a direct consequence $[M^T_\lambda x]_d =n_\lambda [x]_d \asymp [x]_d$ uniformly for $\lambda\in\Lambda_d$ and $x\in\R^d$.
In addition, we have $|M^T_\lambda x| \asymp |x|$ uniformly for $\lambda\in\Lambda_d$ and $x\in\R^d$ since $\|M^T_\lambda\|_{2\rightarrow 2} = \|M_\lambda\|_{2\rightarrow 2} \lesssim 1 $
and also $\|M^{-T}_\lambda\|_{2\rightarrow 2} = \|M^{-1}_\lambda\|_{2\rightarrow 2}\lesssim 1$.

These observations allow the following estimate,
\begin{align}\label{eq:firstterm}
\abs{x_\lambda - x_\mu}&= \abs{ S^{-1}_{\ell\eta_j} A^{-j}_{\alpha,\sigma} \mathcal{T} k -x_\mu}  \notag
=\abs{R_{\theta_\lambda}R_{\varphi_\lambda}S^{-1}_{\ell\eta_j} A^{-j}_{\alpha,\sigma} \mathcal{T} k - R_{\theta_\lambda}R_{\varphi_\lambda}x_\mu} \\\notag
&=\abs{M^T_\lambda \big( \mathcal{T}  A^{-j}_{\alpha,\sigma} k - M^{-T}_\lambda R_{\theta_\lambda}R_{\varphi_\lambda}x_\mu \big) }
\asymp  \abs{  A^{-j}_{\alpha,\sigma} k -\mathcal{T}^{-1} M^{-T}_\lambda R_{\theta_\lambda}R_{\varphi_\lambda}x_\mu   } \\
&\gtrsim  \otnorm{   A^{-j}_{\alpha,\sigma} k-\mathcal{T}^{-1} S_{\ell\eta_j} x_\mu }
= \otnorm{  \sigma^{-j\alpha} k -\mathcal{T}^{-1} S_{\ell\eta_j} x_\mu    }.
\end{align}

In view of $e_\lambda=R^T_{\varphi_\lambda} R^T_{\theta_\lambda} e_d$ we also have the estimate
\begin{align}\label{eq:secondterm}
\abs{\sprod{e_\lambda, x_\lambda - x_\mu}} &= \abs{\sprod{e_\lambda,  S^{-1}_{\ell\eta_j} A^{-j}_{\alpha,\sigma} \mathcal{T} k - x_\mu }} \notag= \abs{\sprod{e_d, R_{\theta_\lambda} R_{\varphi_\lambda} S^{-1}_{\ell\eta_j} A^{-j}_{\alpha,\sigma} \mathcal{T} k - R_{\theta_\lambda} R_{\varphi_\lambda} x_\mu}} \\ \notag
&= \abs{\sprod{e_d, M^T_\lambda \big(\mathcal{T} A^{-j}_{\alpha,\sigma} k -M^{-T}_\lambda R_{\theta_\lambda} R_{\varphi_\lambda} x_\mu \big)}}
\asymp\abs{\sprod{e_d,    A^{-j}_{\alpha,\sigma} k - \mathcal{T}^{-1} S_{\ell\eta_j} x_\mu  }} \\
&=  \abs{\sprod{e_d,  \sigma^{-j} k - \mathcal{T}^{-1} S_{\ell\eta_j} x_\mu}}.
\end{align}

According to the shearlet parametrization \eqref{eq:shearpara} we have $e_\lambda=n_\lambda (\ell\eta_j , 1)^T$, where $n_\lambda \asymp 1$ as shown above.
Hence, there is a constant $c>0$ such that
$n_\lambda \ge c$ for all $\lambda\in\Lambda_d$. It follows $[e_\lambda]_d\ge c>0$ for all $\lambda\in\Lambda_d$.
Without loss of generality we can further assume that $[e_\mu]_d\ge 0$ since $|\{d_{\mathbb{S}}(e_\lambda, -e_\mu)\}|=|\{d_{\mathbb{S}}(e_\lambda, e_\mu)\}|$.

In this situation Lemma~\ref{lem:metrEquiv} applies and tells us that $\abspi{d_\sph(e_\lambda, e_\mu)} \asymp |e_\lambda - e_\mu|$.
Moreover, possibly after changing $e_\mu$ to $\tilde{e}_\mu$ as in
Lemma~\ref{PointSchummel} to enforce $[\tilde{e}_\mu]_d\ge c$,
we have $|e_\lambda - e_\mu| \ge | e_\lambda - \tilde{e}_\mu|$ for all $\lambda\in\Lambda_d$.

Let $\phi$ be the gnomonic projection from Lemma~\ref{pushToPlane}. Then $\phi(e_\lambda)=( \ell \eta_j , 1)^T\in\mathbb{R}^{d}$ and
Lemma~\ref{pushToPlane} together with the observation
$\otnorm{\phi(v)-\phi(w)}= \abs{\phi(v)- \phi(w)}$ implies the estimate,
\[
| e_\lambda -\tilde{e}_\mu |   \asymp | (\ell\eta_j , 1)^T - \phi(\tilde{e}_\mu) | = | (\ell\eta_j,1)^T -\phi(\tilde{e}_\mu) |_{[d-1]}.
\]
Subsequently, let $\nu_\mu\in\R^{d-1}$ be defined by $\phi(\tilde{e}_\mu)=(\nu_\mu,1)^T$. Altogether we arrive at
\begin{align}\label{eq:thirdterm}
\abspi{d_\sph(e_\lambda, e_\mu)}  \gtrsim | (\ell \eta_j , 1)^T -\phi(\tilde{e}_\mu) |_{[d-1]} = | \ell \eta_j - \nu_\mu|.
\end{align}
We now use \eqref{eq:firstterm}, \eqref{eq:secondterm} and \eqref{eq:thirdterm} to estimate
the sum $\mathcal{S}$ in \eqref{eq:sum}.
Introducing the quantities $q_1(\ell):=\sigma^{j\alpha}  \mathcal{T}^{-1}S_{\ell\eta_j} x_\mu$, $q_2(\ell):=\sigma^{j} \mathcal{T}^{-1}S_{\ell\eta_j} x_\mu$, and $q_3:=\eta^{-1}_j \nu_\mu$,
and taking into account $\eta_j\asymp\sigma^{-(1-\alpha)j}$ we obtain
\begin{align*}
\mathcal{S}\lesssim \sum_{k \in \Z^d} \sum_{\ell\in \mathscr{L}_{d,j}}
\left( 1 + \sigma^{2\alpha(j_0-j)}  \otnorm{   k -q_1(\ell)}^2
+ \sigma^{(j_0-j)} \abs{\sprod{e_d,   k-q_2(\ell) }}
	+ \sigma^{2(1-\alpha)(j_0-j)} |  \ell-q_3 |^2 \right)^{-N}.
\end{align*}
Next, we distinguish the cases $j> j^\prime$ and $j\le j^\prime$. For $j\le j^\prime$ we have $j_0=j$ and we obtain
\begin{align*}
\mathcal{S} &\lesssim  \sum_{k \in \Z^d} \sum_{\ell\in\Z^{d-1}}   \left( 1 +  \otnorm{  k-q_1(\ell) }^2
+ \abs{\sprod{e_d,   k-q_2(\ell) }} +  |  \ell- q_3 |^2 \right)^{-N} \\
&\lesssim \sum_{k\in\Z^d} \sum_{\ell\in\Z^{d-1}}   \left( 1 +  \otnorm{k}^2
+ \abs{\sprod{e_d,k}}
+  |\ell|_{[d-1]}^2 \right)^{-N}<\infty .
\end{align*}

In case $j>j^\prime$ it holds $j_0=j^\prime$ and $p:=j^\prime-j\le0$. The term $\sigma^{pd} \mathcal{S}$ is -- up to a multiplicative constant -- bounded by
\begin{align*}
\sum_{\ell\in\Z^{d-1}} \sigma^{p(1-\alpha)(d-1)} \sum_{k \in \Z^d} \sigma^{p\alpha(d-1)}\sigma^{p}
\big( 1 + \sigma^{2\alpha p}  \otnorm{   k-q_1(\ell)}^2
+ \sigma^{p} \abs{\sprod{e_d,  k- q_2(\ell) }}
	+ \sigma^{2(1-\alpha)p} |   \ell -q_3 |^2 \big)^{-N}.
\end{align*}

The last sum can be interpreted as a Riemann sum, which is bounded -- up to a multiplicative constant independent of $j$ and $j^\prime$ as long as $j>j^\prime$
-- by the corresponding integral
\begin{align*}
\int_{y \in \R^{d-1}}  \int_{x\in\R^d}   \Big( 1 +\otnorm{  x-\sigma^{\alpha p} q_1(y) }^2 +  \abs{\sprod{e_d,   x-\sigma^p q_2(y)}}
	+ | y - \sigma^{(1-\alpha)p} q_3 |^2 \Big)^{-N} \,dx\,dy.
\end{align*}
All in all we end up with
$%\begin{align*}
\mathcal{S}\lesssim  \sigma^{d(j-j^\prime)} \int_{y \in \R^{d-1}} \int_{x \in \R^{d}} \Big( 1 +  \otnorm{x}^2 +  \abs{\sprod{e_d, x}} + |y|^2 \Big)^{-N} \,dx\,dy.
$ %\end{align*}
To see that the integral converges for $N>d$, we carry out the integration over $x_d$, which yields up to a fixed constant
\[
\int_{y \in \R^{d-1}} \int_{\tilde{x}\in \R^{d-1}}  ( 1 + \otnorm{\tilde{x}}^2 + |y|^2 )^{-(N-1)} \,d\tilde{x}\,dy = \int_{z\in \R^{2(d-1)}} ( 1 + |z|^2)^{-(N-1)}.
\]
The integral on the right converges precisely for $N>d$. This observation concludes the proof.
\end{proof}

\subsection{Pyramid-adapted Shearlet Systems in $3$D }
\label{ssec:3Dshearlets}

In the sequel we focus on some concrete shearlet systems in $3$ dimensions, which are already on the market and for which we verify that they are instances of
$\alpha$-shearlet molecules for $\alpha=\frac{1}{2}$ and thus by Theorem~\ref{thm:shearletmol} also systems of 3-dimensional parabolic molecules.

Let us first recall the classic definition \cite{KL12} of a shearlet system in $L^2(\R^3)$.
A system of 3D-shearlets is defined as a collection of functions in $L^2(\R^3)$ of the form
\begin{align}\label{eq:biasedsystem}
\Big\{ \psi_{j,\ell,k} = 2^j \psi( S_\ell A_{\frac{1}{2},2}^j \cdot - k) ~:~ j\in\Z, \ell\in\Z^2, k\in\Z^3 \Big\},
\end{align}
where $\psi\in L^2(\R^3)$ is a suitable generating function.
The classical choice for the generator is a function $\psi$ defined on the frequency domain by

\begin{align}\label{eq:classicshearlet}
\hat{\psi}(\xi)=w(\xi_3) \textstyle{ v(\frac{\xi_1}{\xi_3})v(\frac{\xi_2}{\xi_3}) }, \quad \xi=(\xi_1,\xi_2,\xi_3)^T\in\R^3,
\end{align}
where $v\in C^\infty_c(\R)$ is a bump function and $w\in C^\infty_c(\R)$ the Fourier transform of a suitable univariate discrete wavelet.
It is possible to choose $v$ and $w$ so that \eqref{eq:biasedsystem} becomes a Parseval frame for $L^2(\R^3)$.

Unfortunately, the shearlet system \eqref{eq:biasedsystem} has a directional bias due to the fact, that for large shearings the frequency support of the elements
becomes more and more elongated along the $(e_1,e_2)$-plane.
This directional bias affects negatively the approximation properties of \eqref{eq:biasedsystem} and makes this system impractical in most applications.

To avoid this problem, the Fourier domain is usually partitioned into three pyramidal regions
\begin{align*}
\mathcal{P}_1&=\Big\{ (\xi_1,\xi_2,\xi_3)\in\R^3 ~:~ \textstyle{|\frac{\xi_2}{\xi_1}|\le 1,  |\frac{\xi_3}{\xi_1}|\le 1} \Big\}, \\
\mathcal{P}_2&=\Big\{ (\xi_1,\xi_2,\xi_3)\in\R^3 ~:~ \textstyle{|\frac{\xi_1}{\xi_2}|\le 1,  |\frac{\xi_3}{\xi_2}|\le 1} \Big\}, \\
\mathcal{P}_3&=\Big\{ (\xi_1,\xi_2,\xi_3)\in\R^3 ~:~ \textstyle{|\frac{\xi_1}{\xi_3}|\le 1,  |\frac{\xi_2}{\xi_3}|\le 1} \Big\},
\end{align*}
and for each pyramid a separate shearlet system is used.
Then, since each system only has to cover one pyramid, the shear parameters can be restricted avoiding large shears.
To take care of low frequencies, it is common to use distinguished coarse-scale elements with frequencies in a centered box.
Here this will be the cube
\[
\mathcal{R}=\Big\{ \xi\in\R^3 ~:~  |\xi|_\infty\le \textstyle{\frac{1}{8}} \Big\}.
\]
This cube together with the truncated pyramids $\tilde{\mathcal{P}}_1:=\mathcal{P}_1\backslash \mathcal{R}$,
$\tilde{\mathcal{P}}_2:=\mathcal{P}_2\backslash \mathcal{R}$, and $\tilde{\mathcal{P}}_3:=\mathcal{P}_3\backslash \mathcal{R}$
partitions the Fourier domain into 4 regions.

With each of these regions, different operators are associated. The coarse-scale
functions are only translated, in the other regions we also scale and shear.
The scaling and shearing operators associated with the respective regions $\varepsilon\in\{1,2,3\}$ are given by $A^{(\varepsilon)}_{\frac{1}{2},s}=Z^\varepsilon A_{\frac{1}{2},s} Z^{-\varepsilon}$
and $S^{(\varepsilon)}_{h}=Z^\varepsilon S_{h} Z^{-\varepsilon}$, and take the concrete form
\begin{align*}
A^{(1)}_{\frac{1}{2},s}= \begin{pmatrix}
		s  & 0 & 0 \\  0 & s^{\frac{1}{2}} & 0  \\ 0 & 0 & s^{\frac{1}{2}}
\end{pmatrix},
\quad
A^{(2)}_{\frac{1}{2},s}= \begin{pmatrix}
		s^{\frac{1}{2}} & 0 & 0 \\  0 & s & 0  \\ 0 & 0 & s^{\frac{1}{2}}
\end{pmatrix},
\quad	
A^{(3)}_{\frac{1}{2},s}=A_{\frac{1}{2},s}= \begin{pmatrix}
		s^{\frac{1}{2}}  & 0 & 0 \\  0 & s^{\frac{1}{2}} & 0  \\ 0 & 0 & s
\end{pmatrix},	
\end{align*}
for $s>0$,  and for $h\in\R^2$
\begin{align*}
S^{(1)}_{h}= \begin{pmatrix}
	1 & h_1 & h_2  \\ 0 & 1 & 0 \\ 0 & 0 & 1
\end{pmatrix},
\quad
S^{(2)}_{h}= \begin{pmatrix}
		1 & 0 & 0 \\ h_2  & 1 & h_1  \\ 0 & 0 & 1
\end{pmatrix},	
\quad
S^{(3)}_{h}=S_h= \begin{pmatrix}
		1 & 0 & 0 \\  0 & 1 & 0 \\ h_1 & h_2 & 1
\end{pmatrix}.	
\end{align*}

Now we are ready to define a modified shearlet system, which is adapted to our partition of the Fourier domain and therefore
called \emph{cone-adapted}. These systems do not exhibit the directional bias as \eqref{eq:biasedsystem}. In the 3D cone-adapted setting they are called
\emph{pyramid-adapted 3D shearlet systems}.

\begin{defi}[\cite{Kutyniok2012,KuLeLim12}]
For fixed $\tau_1,\tau_2>0$ let $\mathcal{T}=\diag(\tau_1,\tau_2,\tau_2)\in\R^{3\times3}$.
The (affine) \emph{pyramid-adapted 3D shearlet system} generated by the functions $\phi\in L^2(\R^3)$ and
$\psi^\varepsilon\in L^2(\R^3)$, $\varepsilon\in\{1,2,3\}$, is defined as the union
\begin{align}\label{eq:affineshear}
SH(\phi,\psi^1,\psi^2,\psi^3;\tau_1,\tau_2):=\Phi(\phi;\tau_1) \cup \Psi_1(\psi^1;\tau_1,\tau_2) \cup \Psi_2(\psi^2;\tau_1,\tau_2) \cup \Psi_3(\psi^3;\tau_1,\tau_2)
\end{align}
of the coarse-scale functions $\Phi(\phi;\tau_1):=\{ \phi_k=\phi(\cdot -  \tau_1 k) : k\in\Z^3\}$ and the functions
\begin{align*}%\label{eq:pyramidsys}
\Psi_\varepsilon(\psi^\varepsilon;\tau_1,\tau_2) :=
\Big\{ \psi^\varepsilon_{j,\ell,k} = 2^j \psi^\varepsilon( Z^\varepsilon S_{\ell} A_{\frac{1}{2},2}^j Z^{-\varepsilon} \cdot - Z^{\varepsilon}\mathcal{T}Z^{-\varepsilon} k) :~
j\in\N_0, \ell\in\Z^2, |\ell|_\infty\le \lceil 2^{j/2}\rceil, k\in\Z^3 \Big\}
\end{align*}
associated with the pyramids $\tilde{\mathcal{P}}_\varepsilon$ for $\varepsilon\in\{1,2,3\}$.
\end{defi}

These pyramid-adapted affine systems are the prime examples of $\frac{1}{2}$-shearlet-molecules.
In practice, one wants them to be frames, especially tight frames.
However, the construction of tight frames of pyramid-adapted shearlets is not trivial.

The simplest way to obtain a Parseval frame of pyramid-adapted shearlets starts with a Parseval shearlet frame of the type \eqref{eq:biasedsystem},
which is easier to construct. It yields a shearlet system associated with the pyramid $\tilde{\mathcal{P}}_3$
by removing all elements, whose frequency support does not intersect $\tilde{\mathcal{P}}_3$. Truncating
the remaining functions in the frequency domain outside of $\tilde{\mathcal{P}}_3$,
one obtains a Parseval frame for the space
\[
L^2(\tilde{\mathcal{P}}_{3})^\vee :=\{ f\in L^2(\R^3) ~:~ \supp\hat{f}\subset \tilde{\mathcal{P}}_{3} \}.
\]
A similar procedure yields Parseval frames associated with the the other parts of the Fourier domain, namely for
$L^2(\tilde{\mathcal{P}}_{\varepsilon})^\vee$, $\varepsilon\in\{1,2\}$, and $L^2(\mathcal{R})^\vee $.
The union of these frames then is a Parseval frame for the whole space $L^2(\R^3)$.

This approach has the drawback that it leads to bad spatial localization of the shearlets due to their lack of smoothness in the frequency domain,
which is a consequence of the truncation. A different approach was taken by Candès, Demanet, and Ying in \cite{Ying2005}.
They gave up on the affine structure of the system and found a shearlet-like construction of a Parseval frame.
Guo and Labate modified this approach \cite{Guo2010,Guo2012a} and found another shearlet-type construction,
which is even close to affine.

\subsubsection{Bandlimited Tight Frames of Pyramid-adapted Shearlets}
\label{sssec:ssh}

The construction we present here is the one due to Guo and Labate \cite{Guo2010,Guo2012a}.
It starts with a Meyer scaling function $\phi\in \mathscr{S}(\R)$ satisfying $0 \leq \hat{\phi} \leq 1$, $\supp\hat{\phi}\subseteq[-\frac{1}{8},\frac{1}{8}]$,
and $\hat{\phi}=1$ on $[-\frac{1}{16}, \frac{1}{16}]$, which is used to define $\Phi\in\mathscr{S}(\R^3)$ on the Fourier side by
\begin{align}
  \hat{\Phi}(\xi) := \hat{\phi}(\xi_1)\hat{\phi}(\xi_2)\hat{\phi}(\xi_3),\quad \xi=(\xi_1,\xi_2,\xi_3)^T\in\R^3.      \label{eq:phidef}
\end{align}
Then the function
\begin{align*}
	W(\xi) := \sqrt{ \hat{\Phi}^2(2^{-2}\xi)- \hat{\Phi}^2(\xi)}, \quad \xi\in\R^3,
\end{align*}
is defined, with
$
\supp W\subseteq {\textstyle[-{\frac{1}{2}},\frac{1}{2}]^3\backslash (-\frac{1}{16},\frac{1}{16})^3}
$
and $W=1$ on $[-{\scriptsize{\frac{1}{4}}},\scriptsize{\frac{1}{4}}]^3\backslash (-\frac{1}{8},\frac{1}{8})^3$,
such that $%\begin{align*}
\hat{\Phi}^2(\xi) + \sum_{j \geq 0} W^2(2^{-2j}\xi) = 1.
$%\end{align*}
for every $\xi\in\R^3$.
These functions
thus produce a smooth tiling of the frequency domain into cartesian coronae.
To take care of the directional scaling, a bump-like function $v\in C_c^\infty(\R)$ is used, which satisfies $\supp v  \subseteq [-1,1]$ and
\begin{align*}
	&\abs{v(t-1)}^2 + \abs{v(t)}^2 + \abs{v(t+1)}^2 =1 \quad \text{ for } t\in [-1,1],\\
    &v(0)=1 \text{ and } v^{(n)}(0)=0 \text{ for } n\ge1.
\end{align*}
For an explicit construction of such a function we refer to \cite{GL07}.
Then $V\in C^\infty(\R^3)$ is defined by
\[
V(\xi) = \textstyle{ v( \frac{\xi_1}{\xi_3}) v(\frac{\xi_2}{\xi_3})}, \quad\xi=(\xi_1,\xi_2,\xi_3)^T\in\R^3,
\]
with support fully contained in the pyramid $\mathcal{P}_3$.
After this preparation the
smooth Parseval frame of band-limited 3D-shearlets
introduced by Guo and Labate in \cite{Guo2012a} can be defined.

The \emph{coarse-scale functions}, which take care of the low frequencies in $\mathcal{R}$,
are translates of the function $\Phi$ from \eqref{eq:phidef},
\begin{align}\label{eq:coarsescale}
SH_0:=\big\{	\psi^{0}_{0,{\bf0},k}(x) = \Phi(\cdot - k) ~:~  k\in\Z^3 \big\}.
\end{align}

The shearlets, whose frequency support is fully contained in the respective pyramids, are called \emph{interior shearlets}.
They are defined as the collection of functions
\begin{align*}%\label{eq:intshear}
SH_{int}:=\big\{ \psi^{\varepsilon}_{j,\ell,k} ~:~ (\varepsilon,j,\ell,k)\in\Lambda_{int} \big\}
\end{align*}
indexed by $\Lambda_{int}:=\{ (\varepsilon,j,\ell,k)\in \{1,2,3\}\times\N_0\times \Z^2 \times\Z^3 : |\ell|_\infty<2^j \}$ and with Fourier transforms
\begin{align} \label{eq:psifordef}
\hat{\psi}_{j,\ell,k}^\varepsilon(\xi)= 2^{-2j} W(2^{-2j}\xi) V ( S^{-T}_{\ell} A^{-j}_{\frac{1}{2},4}  Z^{-\varepsilon} \xi)
\exp( - 2\pi i \sprod{ Z^{\varepsilon} S^{-T}_\ell A^{-j}_{\frac{1}{2},4} Z^{-\varepsilon}\xi, k}),
\quad \xi\in\R^3,
\end{align}
where $W$ and $V$ are the auxiliary functions from above. Observe that
\[
V ( S^{-T}_{\ell} A^{-j}_{\frac{1}{2},4} \xi)=v(2^j \textstyle{\frac{\xi_1}{\xi_3}} -\ell_1) v(2^j \textstyle{\frac{\xi_2}{\xi_3}} -\ell_2),
\]
and compare this to \eqref{eq:classicshearlet}.
Finally, the so-called \emph{boundary shearlets}, obtained by carefully glueing together shearlets from adjacent pyramidal regions, are added to obtain a smooth well-localized frame.
The glueing process is rather delicate, since it is important to select the right shearlets matching together.
In fact, even in the original construction~\cite{Guo2012a} there are some inaccuracies. We correct them here, which leads to a slight modification
of the original definition.

We make a distinction between boundary shearlets defined
between two pyramidal regions and those at the corners, where three pyramidal regions meet.
Following \cite{Guo2012a} we also distinguish between the scales $j\ge 1$ and $j=0$.

Let us begin with the scales $j\ge1$ and the functions at the boundary where only two pyramidal regions meet.
We define, for $j\ge1$, $\varepsilon_1\in\{-1,1\}$, $\ell_1=\varepsilon_1 2^j$, $|\ell_2|<2^j$, and $k\in\Z^3$,
\begin{align}\label{eq:bndshear1}
	\hat{\psi}_{j,\ell,k}^1(\xi) &= \begin{cases}\notag
         2^{-2j-3}W(2^{-2j}\xi) v(2^j\frac{\xi_2}{\xi_1} - \ell_1 ) v(2^j\frac{\xi_3}{\xi_1} - \ell_2) \exp(- 2\pi i \sprod{ 2^{-2} Z S^{-T}_\ell  A_{\frac{1}{2},2^{-2j}}   Z^{-1}\xi, k}) & , \xi \in \mathcal{P}_1; \\
	     2^{-2j-3}W(2^{-2j}\xi) v(2^j\frac{\xi_1}{\xi_2} - \ell_1 ) v(2^j\frac{\xi_3}{\xi_2} - \varepsilon_1\ell_2) \exp(- 2\pi i \sprod{ 2^{-2} Z S^{-T}_\ell A^{-j}_{\frac{1}{2},4}  Z^{-1}\xi, k}) & , \xi \in \mathcal{P}_2;
    \end{cases} \\\notag
	\hat{\psi}_{j,\ell,k}^2(\xi) &= \begin{cases}
         2^{-2j-3}W(2^{-2j}\xi) v(2^j\frac{\xi_3}{\xi_2} - \ell_1)  v(2^j\frac{\xi_1}{\xi_2} - \ell_2 ) \exp(- 2\pi i \sprod{ 2^{-2} Z^2 S^{-T}_\ell  A^{-j}_{\frac{1}{2},4}   Z^{-2} \xi, k}) &, \xi \in \mathcal{P}_2; \\
	     2^{-2j-3}W(2^{-2j}\xi) v(2^j\frac{\xi_2}{\xi_3} - \ell_1)  v(2^j\frac{\xi_1}{\xi_3} - \varepsilon_1\ell_2 ) \exp(- 2\pi i \sprod{ 2^{-2} Z^2 S^{-T}_\ell  A^{-j}_{\frac{1}{2},4}  Z^{-2} \xi, k}) &, \xi \in \mathcal{P}_3;
    \end{cases} \\
	\hat{\psi}_{j,\ell,k}^3(\xi) &= \begin{cases}
         2^{-2j-3}W(2^{-2j}\xi) v(2^j\frac{\xi_1}{\xi_3} - \ell_1 ) v(2^j\frac{\xi_2}{\xi_3} - \ell_2) \exp(- 2\pi i \sprod{ 2^{-2} S^{-T}_\ell  A^{-j}_{\frac{1}{2},4}   \xi, k}) &, \xi \in \mathcal{P}_3; \\
         2^{-2j-3}W(2^{-2j}\xi) v(2^j\frac{\xi_3}{\xi_1} - \ell_1 ) v(2^j\frac{\xi_2}{\xi_1} - \varepsilon_1\ell_2)  \exp(- 2\pi i \sprod{ 2^{-2} S^{-T}_\ell  A^{-j}_{\frac{1}{2},4}   \xi, k}) &, \xi \in \mathcal{P}_1.
    \end{cases}
\end{align}

Note that we only give the definition in the regions where the functions have non-trivial support. Outside they are supposed to be zero.

Next, we come to the corners where three pyramidal regions meet. Here we have to glue together three shearlet parts, each coming from a different pyramid.
For convenience the corner elements will be associated to the first pyramid $\mathcal{P}_1$.
For $j\geq1$, $\ell_1=\varepsilon_1 2^j$, $\ell_2=\varepsilon_2 2^j$ with $\varepsilon_1,\varepsilon_2\in\{-1,1\}$,
and $k\in\Z^3$, they are defined as follows:
\begin{align}\label{eq:bndshear2}
	\hat{\psi}_{j,\ell,k}^1(\xi) &= \begin{cases}
      2^{-2j-3}W(2^{-2j}\xi) v(2^j\frac{\xi_2}{\xi_1} - \ell_1 ) v(2^j\frac{\xi_3}{\xi_1} - \ell_2) \exp(- 2\pi i \sprod{ 2^{-2} Z S^{-T}_\ell A^{-j}_{\frac{1}{2},4}   Z^{-1} \xi, k}) &, \xi \in \mathcal{P}_1; \\
	  2^{-2j-3}W(2^{-2j}\xi) v(2^j\frac{\xi_1}{\xi_2} - \ell_1) v(2^j\frac{\xi_3}{\xi_2} -\varepsilon_1 \ell_2 )  \exp(- 2\pi i \sprod{ 2^{-2} Z S^{-T}_\ell  A^{-j}_{\frac{1}{2},4}  Z^{-1}\xi, k}) &, \xi \in \mathcal{P}_2; \\
	  2^{-2j-3}W(2^{-2j}\xi)  v(2^j\frac{\xi_2}{\xi_3} - \varepsilon_2 \ell_1) v(2^j\frac{\xi_1}{\xi_3} - \ell_2 ) \exp(- 2\pi i \sprod{ 2^{-2} Z S^{-T}_\ell A^{-j}_{\frac{1}{2},4} Z^{-1}\xi, k}) &, \xi \in \mathcal{P}_3.
	 \end{cases}
\end{align}

As in the original construction the definition of the boundary shearlets is slightly different at the lowest scale $j=0$.
For $j=0$, $\ell_1=\pm 1$, $\ell_2=0$, and $k\in\Z^3$, we set
\begin{align}\label{eq:bndshear3}
	\hat{\psi}_{j,\ell,k}^1(\xi) &= \begin{cases} \notag
         W(\xi) v(\frac{\xi_2}{\xi_1} - \ell_1 ) v(\frac{\xi_3}{\xi_1} ) \exp(- 2\pi i \sprod{\xi, k}) & , \xi \in \mathcal{P}_1; \\
	     W(\xi) v(\frac{\xi_1}{\xi_2} - \ell_1 ) v(\frac{\xi_3}{\xi_2} ) \exp(- 2\pi i \sprod{\xi, k}) & , \xi \in \mathcal{P}_2;
    \end{cases} \\
	\hat{\psi}_{j,\ell,k}^2(\xi) &= \begin{cases}
         W(\xi) v(\frac{\xi_3}{\xi_2} - \ell_1)  v(\frac{\xi_1}{\xi_2} ) \exp(- 2\pi i \sprod{  \xi, k}) &, \xi \in \mathcal{P}_2; \\
	     W(\xi) v(\frac{\xi_2}{\xi_3} - \ell_1)  v(\frac{\xi_1}{\xi_3} ) \exp(- 2\pi i \sprod{  \xi, k}) &, \xi \in \mathcal{P}_3;
    \end{cases} \\\notag
	\hat{\psi}_{j,\ell,k}^3(\xi) &= \begin{cases}
         W(\xi) v(\frac{\xi_1}{\xi_3} - \ell_1 ) v(\frac{\xi_2}{\xi_3} ) \exp(- 2\pi i \sprod{     \xi, k}) &, \xi \in \mathcal{P}_3; \\
         W(\xi) v(\frac{\xi_3}{\xi_1} - \ell_1 ) v(\frac{\xi_2}{\xi_1} ) \exp(- 2\pi i \sprod{    \xi, k}) &, \xi \in \mathcal{P}_1.
    \end{cases}
\end{align}

Finally, we come to the corner elements at the scale $j=0$. Again they are associated with the first pyramid.
Let $\varepsilon_1,\varepsilon_2\in\{-1,1\}$. Then we define them for $j=0$, $\ell=(\varepsilon_1,\varepsilon_2)$, and $k\in\Z^3$ by
\begin{align}\label{eq:bndshear4}
	\hat{\psi}_{j,\ell,k}^1(\xi) &= \begin{cases}
      W(\xi) v(\frac{\xi_2}{\xi_1} - \ell_1 ) v(\frac{\xi_3}{\xi_1} - \ell_2) \exp(- 2\pi i \sprod{  \xi, k}) &, \xi \in \mathcal{P}_1; \\
	  W(\xi) v(\frac{\xi_1}{\xi_2} - \ell_1) v(\frac{\xi_3}{\xi_2} -\varepsilon_1 \ell_2 )  \exp(- 2\pi i \sprod{\xi, k}) &, \xi \in \mathcal{P}_2; \\
	  W(\xi) v(\frac{\xi_2}{\xi_3} - \varepsilon_2 \ell_1) v(\frac{\xi_1}{\xi_3} - \ell_2 ) \exp(- 2\pi i \sprod{ \xi, k}) &, \xi \in \mathcal{P}_3.
	 \end{cases}
\end{align}

All boundary shearlets are collected in the family $SH_{bound}$.
Together with the coarse-scale functions $SH_{0}$ and the interior shearlets $SH_{int}$ they provide a Parseval frame for $L^2(\R^3)$.

\begin{theo}[\cite{Guo2012a}]
The system of $3D$-shearlets
\begin{align*}
SH := SH_0 \cup SH_{int} \cup SH_{bound} %\Big\{ \psi^\varepsilon_{j,\ell,k} ~:~ k\in\Z^3 \Big\},
\end{align*}
is a smooth (well-localized) Parseval frame for $L^2(\R^3)$, consisting of band-limited Schwarz functions.
\end{theo}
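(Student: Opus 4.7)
The plan is to establish three things: that every element of $SH$ is a band-limited Schwartz function, that for each ``shape'' the subsystem indexed by the translation parameter behaves like a Parseval frame on a subspace, and finally that the frequency-domain weights sum pointwise to $1$.

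First, I would verify the regularity and support claims. Each $\hat\psi_\lambda$ is a finite product of $W(2^{-2j}\cdot)$ or $\hat\Phi$ (which are smooth and compactly supported) with directional bump factors built from $v \in C^\infty_c(\R)$, together with a pure phase $e^{-2\pi i \langle B\xi,k\rangle}$ that does not affect smoothness or support. Consequently $\hat\psi_\lambda \in C^\infty_c(\R^3)$, so $\psi_\lambda$ is a band-limited Schwartz function. The translation parameter $k$ enters only through the phase, so this is uniform across the family.

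Second, for each fixed shape $\sigma=(\varepsilon,j,\ell)$, I would write $\hat\psi^\varepsilon_{j,\ell,k}(\xi)=F_\sigma(\xi)\,e^{-2\pi i\langle B_\sigma\xi,k\rangle}$, where $F_\sigma$ is the $k$-independent prefactor and $B_\sigma$ is the invertible matrix that appears in the exponent (e.g.\ $2^{-2}Z^\varepsilon S_\ell^{-T}A^{-j}_{\frac12,4}Z^{-\varepsilon}$ for interior shearlets). The support of $F_\sigma$ is chosen so that $B_\sigma(\supp F_\sigma)$ fits inside a fundamental domain of $\Z^3$; this is exactly why the scalings $A^{-j}_{\frac12,4}$ and shearings $S^{-T}_\ell$ appear in the phase. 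Under this condition, the change of variables $\eta=B_\sigma\xi$ and the Plancherel identity for Fourier series on a cube give
\begin{equation*}
\sum_{k\in\Z^3}\bigl|\langle f,\psi^\varepsilon_{j,\ell,k}\rangle\bigr|^2
=|\det B_\sigma|^{-1}\int_{\R^3}|\hat f(\xi)|^2\,|F_\sigma(\xi)|^2\,d\xi.
\end{equation*}
An analogous identity holds for the coarse-scale shape, with $F_0=\hat\Phi$ and $B_0=I$.

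Third and most importantly, I would verify the pointwise tiling identity
\begin{equation*}
|\hat\Phi(\xi)|^2+\sum_{\sigma}|\det B_\sigma|^{-1}|F_\sigma(\xi)|^2=1\qquad\text{for a.e. }\xi\in\R^3,
\end{equation*}
where $\sigma$ ranges over all interior and boundary shape indices. Summing the per-shape identity from step two and invoking Plancherel yields $\sum_\lambda|\langle f,\psi_\lambda\rangle|^2=\|f\|_2^2$. The tiling identity itself factors into the radial partition $|\hat\Phi(\xi)|^2+\sum_{j\ge 0}W^2(2^{-2j}\xi)=1$ (baked into the definition of $W$) and, inside each annulus, an angular partition. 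On the interior of a single pyramid $\mathcal P_\varepsilon$ the angular identity is a product of two copies of the one-variable identity $\sum_{m\in\Z}|v(t-m)|^2=1$ for $t\in[-1,1]$, which is guaranteed by the choice of $v$.

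The main obstacle is the boundary bookkeeping, and this is why the original construction required correction. At a face where $\mathcal P_{\varepsilon_1}$ meets $\mathcal P_{\varepsilon_2}$, points $\xi$ in the overlap receive contributions from the interior directional bumps on both sides (with $|\ell_1|<2^j$) together with the boundary shearlets \eqref{eq:bndshear1} that absorb the $\ell_1=\pm 2^j$ indices straddling both pyramids; at a triple corner three such pieces meet and are collected into the single corner shearlets \eqref{eq:bndshear2}, assigned by convention to $\mathcal P_1$. The normalizing factor $2^{-2j-3}$ (rather than $2^{-2j}$) together with the case-by-case definitions of the directional arguments (with the sign flips $\varepsilon_1\ell_2$, $\varepsilon_2\ell_1$) is exactly what is needed so that, when summing $|F_\sigma(\xi)|^2$ over all interior and boundary shearlets intersecting the overlap, the $v$-identity telescopes and the total on the overlap equals the same $W^2(2^{-2j}\xi)$ obtained in the interior of a single pyramid. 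I would carry out this verification by a case distinction on whether $\xi$ lies in the interior of one pyramid, on a face shared by two, or at a corner shared by three, checking in each case that the correct multiplicities of interior and boundary shearlets combine to yield $W^2(2^{-2j}\xi)$. The special lowest-scale formulas \eqref{eq:bndshear3}--\eqref{eq:bndshear4} are handled by the same argument with $W$ in place of $2^{-2j}W(2^{-2j}\cdot)$. Combining these three steps completes the proof that $SH$ is a Parseval frame of band-limited Schwartz functions.
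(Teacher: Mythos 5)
This theorem is quoted by the paper from \cite{Guo2012a}; the paper does not reprove it (its own original contribution here is the corrected definition of the boundary shearlets, not the Parseval-frame proof). So there is no internal proof to compare against, and I can only assess whether your blind attempt is a plausible route to the cited result.

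Your three-step outline (regularity and band-limitedness, a per-shape Plancherel/Fourier-series reduction using a support-in-a-fundamental-domain condition, and a pointwise frequency tiling identity that factors into the radial $W$-partition times an angular $v$-partition) is exactly the standard strategy for such band-limited Parseval shearlet constructions and is in substance correct. A few points deserve more care than your sketch gives them. First, your step one treats $\hat\psi_\lambda$ as ``a finite product'' of smooth factors, but the boundary and corner shearlets are defined piecewise, with different analytic expressions on adjacent pyramids; smoothness across the seams $\{|\xi_i|=|\xi_j|\}$ is not automatic and is precisely what the extra normalization $v(0)=1$, $v^{(n)}(0)=0$ for $n\ge1$ (and the matched phase factors) are designed to secure. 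Your argument should invoke these conditions explicitly rather than fold the boundary elements into the generic ``finite product'' case. Second, the per-shape identity requires that $B_\sigma(\supp F_\sigma)$ lies in a cube of side $1$ for \emph{every} shape, including the boundary and corner shearlets whose support is larger than that of an interior shearlet; this is where the translation lattice $\mathcal T = \frac14\,\mathrm{diag}(1,1,1)$ for boundary elements (and hence the factor $2^{-3}$ in the amplitude) enters, and it is worth stating rather than leaving implicit in ``is chosen so that.'' Finally, it is worth observing that with $B_\sigma = Z^\varepsilon S_\ell^{-T}A^{-j}_{\frac12,4}Z^{-\varepsilon}$ one has $|\det B_\sigma|^{-1}|F_\sigma|^2 = |W(2^{-2j}\cdot)V(\cdot)|^2$, so the determinants cancel out of your tiling identity, leaving a sum of $|WV|^2$-type terms — your formula is correct but this simplification makes the subsequent angular bookkeeping cleaner. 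With these elaborations your proposal matches what one expects the proof in \cite{Guo2012a} to do.
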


The corresponding index set $\Lambda_{SH}\subset \{0,1,2,3\}\times\N_0\times\Z^2\times\Z^3$ is given by
\begin{align}\label{eq:sshindex}
\Lambda_{SH}:=\Big\{ (0,0,{\bf0},k): k\in\Z^3 \Big\} \cup \Big\{ (\varepsilon,j,\ell,k) ~:~ \varepsilon\in \{1,2,3\},\, j\in\N_0,\, \ell\in\mathscr{L}_{\varepsilon,j}\subseteq\Z^2,\, k\in\Z^3  \Big\}
\end{align}
with the shear parameters $\mathscr{L}_{\varepsilon,j}=\{ (\ell_1,\ell_2)\in\Z^2 : |\ell_1|\le 2^j , |\ell_2|<2^j \}$ for $\varepsilon\in\{2,3\}$ and
$\mathscr{L}_{1,j}=\{ (\ell_1,\ell_2)\in\Z^2 : |\ell_1|\le 2^j , |\ell_2|<2^j \} \cup \{ (\pm 2^j, \pm 2^j) \}$ at each scale $j\in\N_0$.

The system $SH$ is another instance of a system of $\frac{1}{2}$-shearlet molecules (at least after an appropriate
re-indexing). In particular, it falls within the framework of $\alpha$-molecules for $\alpha=\frac{1}{2}$.

\begin{prop}\label{prop:sshmol}
Appropriately re-indexed, the smooth Parseval frame of band-limited shearlets $SH$ constitutes a system of
$3$-dimensional $\frac{1}{2}$-shearlet molecules of order $(\infty,\infty,\infty,\infty)$.
\end{prop}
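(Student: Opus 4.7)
The plan is to verify the hypotheses of Definition~\ref{def:shearAlphaMol2} with the parameter choice $\sigma=4$, $\alpha=\frac{1}{2}$, $\mathcal{T}=I$; the shearing step $\eta_j=\sigma^{-(1-\alpha)j}=2^{-j}$ then coincides with the one implicitly used by Guo and Labate. For a shearlet $\psi^\varepsilon_{j,\ell,k}$ the target form is $2^{2j}\gamma^\varepsilon_{j,\ell,k}(M^\varepsilon x-k)$ with $M^\varepsilon:=Z^\varepsilon A^j_{\frac{1}{2},4}S_{\ell 2^{-j}}Z^{-\varepsilon}$. The crucial algebraic identity $A^j_{\frac{1}{2},4}S_{\ell 2^{-j}}=S_\ell A^j_{\frac{1}{2},4}$, with transposed inverse $(M^\varepsilon)^{-T}=Z^\varepsilon S_\ell^{-T}A^{-j}_{\frac{1}{2},4}Z^{-\varepsilon}$, reproduces exactly the operator appearing in~\eqref{eq:psifordef}, so the generator can be read off directly from the Fourier definition. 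The ``appropriate re-indexing'' of the statement then amounts to the bijection between $\Lambda_{SH}$ and the general shearlet index set $\Lambda^s$ obtained by assigning every boundary and corner shearlet to a single $\varepsilon$.

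I would go through the cases of Definition~\ref{def:shearAlphaMol2} one at a time. The coarse-scale part is trivial since $\hat{\Phi}$ is a fixed Schwartz bump. For the interior shearlets $|\ell|_\infty<2^j$, substituting $\eta=(M^\varepsilon)^{-T}\xi$ in \eqref{eq:psifordef} and matching with the ansatz leads to
\begin{align*}
\hat{\gamma}^\varepsilon_{j,\ell,k}(\eta)=W\big(2^{-2j}(M^\varepsilon)^T\eta\big)\,V(Z^{-\varepsilon}\eta),
\end{align*}
and a direct computation gives, with $\zeta:=Z^{-\varepsilon}\eta$, the explicit form $2^{-2j}(M^\varepsilon)^T\eta=Z^\varepsilon\big(2^{-j}(\zeta_1+\ell_1\zeta_3),\,2^{-j}(\zeta_2+\ell_2\zeta_3),\,\zeta_3\big)^T$. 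Since the rescaled shear coefficients $\ell_i/2^j$ lie in $[-1,1]$, all partial derivatives of $\hat{\gamma}^\varepsilon_{j,\ell,k}$ are uniformly bounded on its (fixed) compact support, which immediately takes care of the polynomial decay factors in \eqref{eq:shearcon}. For the min-factor I would show that $|\zeta_3|\ge\tfrac{1}{16}$ uniformly on the support: if $|\zeta_3|<\tfrac{1}{16}$, the annular constraint of $W$ forces $2^{-j}|\zeta_i+\ell_i\zeta_3|\ge\tfrac{1}{16}$ for some~$i$, and combining the reverse triangle inequality with $|\zeta_i|\le|\zeta_3|$ (from $V$) and the integer bound $|\ell_i|+1\le 2^j$ yields $|\zeta_3|\ge\tfrac{1}{16}$, a contradiction. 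Consequently the min-factor is bounded below by a positive constant and any order $M$ is trivially accommodated.

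The main obstacle will be the treatment of the boundary and corner shearlets~\eqref{eq:bndshear1}--\eqref{eq:bndshear4}, which are defined piecewise across two or three adjacent pyramids. Each must be assigned to a single $\varepsilon$ dictated by the shared exponential factor in its definition (for example, the $\mathcal{P}_1$-$\mathcal{P}_2$ boundary shearlet $\psi^1_{j,\ell,k}$ keeps $\varepsilon=1$, and the corner shearlets are assigned $\varepsilon=1$ as in~\eqref{eq:bndshear2} and~\eqref{eq:bndshear4}). The piece living in $\mathcal{P}_\varepsilon$ is then handled exactly as for the interior shearlets, while the remaining pieces must be re-expressed in terms of the same operator~$(M^\varepsilon)^{-T}$. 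The key observation is that adjacency between $\mathcal{P}_\varepsilon$ and a neighbouring pyramid is realized by a signed coordinate permutation which commutes with the shearing and scaling up to a permutation of the $v$-arguments --- this is precisely the compatibility Guo and Labate build in to obtain smooth gluing. Once all pieces are expressed in a common chart, the full generator $\hat{\gamma}^\varepsilon_{j,\ell,k}$ is again a compactly supported smooth function whose derivatives are uniformly bounded in $(j,\ell,k)$, because the rescaled shear parameters retain modulus at most~$1$; the min-factor analysis carries over with the slightly weaker but still uniform lower bound $|\zeta_3|\gtrsim 1$ obtained by allowing $|\ell_i|\le 2^j$. The low-scale shearlets $j=0$ in \eqref{eq:bndshear3}--\eqref{eq:bndshear4} are addressed analogously with the simpler unscaled formulas. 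Once Definition~\ref{def:shearAlphaMol2} is verified in every case, Theorem~\ref{thm:shearletmol} yields the conclusion.
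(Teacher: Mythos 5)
Your approach for the coarse-scale function $\Phi$ and the interior shearlets is essentially the same as the paper's: fix $\sigma=4$, $\eta_j=2^{-j}$, identify the algebraic identity $A^j_{\frac12,4}S_{\ell 2^{-j}}=S_\ell A^j_{\frac12,4}$, read off $\hat\gamma^\varepsilon_{j,\ell,k}=W(Z^\varepsilon M_{j,\ell}Z^{-\varepsilon}\cdot)V(Z^{-\varepsilon}\cdot)$ with $M_{j,\ell}=2^{-2j}A^j_{\frac12,4}S_\ell^T$, and use the support constraints of $W$ and $V$ to produce a uniform lower bound on $\vert\zeta_3\vert$ so that the min-factor in \eqref{eq:shearcon} is trivially handled. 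This part is correct.

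The genuine gap is your treatment of the re-indexing and of the scale-$j=0$ boundary and corner shearlets \eqref{eq:bndshear3}--\eqref{eq:bndshear4}. You assert that the re-indexing ``amounts to assigning every boundary and corner shearlet to a single $\varepsilon$'' and that the $j=0$ case is ``addressed analogously.'' That does not work. At scale $j=0$ the boundary/corner shearlets are pure translates, $\psi^\varepsilon_{0,\ell,k}(x)=\psi^\varepsilon_{0,\ell,0}(x-k)$, while a shearlet molecule with index $(\varepsilon,0,\ell,k)$ and $\ell\neq 0$ has the form $\gamma^\varepsilon_{0,\ell,k}\bigl(Z^\varepsilon S_\ell Z^{-\varepsilon}x-\mathcal{T}k\bigr)$. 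Matching the two forces $\gamma^\varepsilon_{0,\ell,k}(z)=\psi^\varepsilon_{0,\ell,0}\bigl(Z^\varepsilon S_\ell^{-1}Z^{-\varepsilon}z+(Z^\varepsilon S_\ell^{-1}Z^{-\varepsilon}\mathcal{T}-I)k\bigr)$, and the spatial shift $(Z^\varepsilon S_\ell^{-1}Z^{-\varepsilon}\mathcal{T}-I)k$ cannot vanish for any diagonal $\mathcal{T}$ when $\ell\neq 0$. On the Fourier side that shift is a modulation whose derivatives grow linearly in $\vert k\vert$, so the uniformity-in-$\Lambda$ required by \eqref{eq:shearcon} fails. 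Keeping $\ell=0$ instead collides with the interior shearlet at $(\varepsilon,0,0,k)$. The paper resolves this by moving all $11$ low-scale element types per lattice point into the coarse-scale class $\Sigma_0$ (where $\ell={\bf 0}$ and no shear is applied), via a bijection $\tilde k(\varepsilon,j,\ell,k)=(k_1,k_2,11k_3+\mathcal{N}(\varepsilon,j,\ell))$ combined with the rescaled lattice $\mathcal{T}=\mathrm{diag}(1,1,\tfrac1{11})$; this is the substance of the ``appropriate re-indexing'' and your proposal does not supply it. A secondary, more easily fixable issue: you take $\mathcal{T}=I$ throughout, but the boundary shearlets at $j\geq 1$ carry a $2^{-2}$ prefactor in their phase (see \eqref{eq:bndshear1}--\eqref{eq:bndshear2}), so one must take $\mathcal{T}=\tfrac14 I$ on that subfamily -- permissible because the framework explicitly allows $\mathcal{T}$ to depend on $(\varepsilon,j,\ell)$ within a fixed interval, but it has to be said.
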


\begin{proof}
We first re-index the coarse-scale functions \eqref{eq:coarsescale} as well as the boundary elements \eqref{eq:bndshear3} and \eqref{eq:bndshear4} at scale $j=0$.
For this we utilize the index set $\Gamma \subset  \{0,1,2,3\}\times\N_0\times\Z^2 $ given by
\[
\Gamma:=\big\{ (0,0,{\bf0}) \big\} %\in \N_0\times\N_0\times\N_0^2 \}
\cup \big\{ (\varepsilon,0,\ell) : \varepsilon\in\{1,2,3\}, \ell=(\pm 1,0)  \big\} \cup \big\{ (1,0,\ell):  \ell=(\pm 1,\pm1) \big\}.
\]
The functions we want to re-index are then precisely those functions $\psi^\varepsilon_{j,\ell,k}\in SH$ with $(\varepsilon,j,\ell,k) \in \Delta:=\Gamma\times\Z^3$.
It holds $\#\Gamma=11$, and we enumerate the set $\Gamma$ in lexicographic order from $0$ to $10$,
described by a bijective function $\mathcal{N}:\Gamma\rightarrow\{0,\ldots,10\}$.
For $(\varepsilon,j,\ell,k) \in \Delta$ we then re-index as follows.
Writing $\tilde{k}(\varepsilon,j,\ell,k):=(k_1,k_2,11 \cdot k_3+\mathcal{N}(\varepsilon,j,\ell))\in\Z^3$
the re-indexed elements are given by
\[
\widetilde{\psi}^0_{0,{\bf0},\tilde{k}(\varepsilon,j,\ell,k)}:=\psi^\varepsilon_{j,{\bf\ell},k} \quad\text{for }(\varepsilon,j,\ell,k) \in \Delta.
\]
For $(\varepsilon,j,\ell,k) \in \Lambda_{SH}\backslash \Delta$ the functions $\tilde{\psi}^\varepsilon_{j,{\bf\ell},k}:=\psi^\varepsilon_{j,{\bf\ell},k}$ remain the same.
The relabelling is thus given by
\begin{align}\label{eq:relabel}
F_{label}:\Lambda_{SH}\rightarrow \widetilde{\Lambda}_{SH},\:
(\varepsilon,j,\ell,k)\mapsto \begin{cases} (0,0,{\bf0},\tilde{k}(\varepsilon,j,\ell,k)) \,&,(\varepsilon,j,\ell,k)\in\Delta; \\ (\varepsilon,j,\ell,k) &,(\varepsilon,j,\ell,k)\notin\Delta. \end{cases}
\end{align}
The newly obtained system
\begin{align}\label{eq:reindexed}
\widetilde{SH}:=\big\{\widetilde{\psi}^\varepsilon_{j,\ell,k} \big\}_{(\varepsilon,j,\ell,k)\in \widetilde{\Lambda}_{SH}}
\end{align}
is equipped with a shearlet index set $\tilde{\Lambda}_{SH}\subset \{0,1,2,3\}\times\N_0\times\Z^2\times\Z^3$ similar to \eqref{eq:sshindex},
however with the modified shear parameters $\mathscr{L}_{1,0}=\mathscr{L}_{2,0}=\mathscr{L}_{3,0}=\{ (0,0)\}$ at scale $j=0$.

In the remainder we show that \eqref{eq:reindexed} is a system of $\frac{1}{2}$-shearlet molecules of
order $(\infty,\infty,\infty,\infty)$.
As parameters we fix $\sigma=4$ and $\eta_j=2^{-j}$ for $j\in\N_0$.
The translation parameters $\mathcal{T}=\diag(\tau_1,\tau_2,\tau_3)$ vary with the indices and are chosen suitably later.
Clearly, we have $S_{\ell} A^j_{\frac{1}{2},4}=A^j_{\frac{1}{2},4} S_{\ell\eta_j}$.
Thus, defining
\begin{align}\label{eq:generators}
\gamma^\varepsilon_{j,\ell,k}(x):= 2^{-2j} \tilde{\psi}^\varepsilon_{j,\ell,k}\big(Z^{\varepsilon}   A^{-j}_{\frac{1}{2},4} S^{-1}_{\ell} Z^{-\varepsilon} (x + \mathcal{T} k) \big), \quad x\in\R^3,
\end{align}
for every $(\varepsilon,j,\ell,k)\in\widetilde{\Lambda}_{SH}$ we get the desired representation \eqref{eq:sheargen}, i.e.\
\begin{align*}
	\tilde{\psi}_{j,\ell,k}^\varepsilon(x) = 2^{2j} \gamma_{j,\ell,k}^\varepsilon ( Z^{\varepsilon} S_{\ell} A^j_{\frac{1}{2},4} Z^{-\varepsilon} x- \mathcal{T} k)
    = 2^{2j} \gamma_{j,\ell,k}^\varepsilon ( Z^{\varepsilon} A^j_{\frac{1}{2},4} S_{\ell\eta_j}Z^{-\varepsilon} x- \mathcal{T} k), \quad x\in\R^3.
\end{align*}
On the Fourier side the generators \eqref{eq:generators} take the form
\begin{align}\label{Ffffff}
	\sigmafuncfor^\varepsilon(\xi)= 2^{2j}\hat{\tilde{\psi}}^\varepsilon_{j,\ell,k}\big( Z^{\varepsilon}  A^j_{\frac{1}{2}, 4} S^T_{\ell} Z^{-\varepsilon} \xi\big) \exp(2\pi i \sprod{\xi, \mathcal{T} k}), \quad \xi\in\R^3,
\end{align}
and it remains to show that these functions satisfy \eqref{eq:shearcon}.
For this task we distinguish between the coarse-scale elements, the interior shearlets and the boundary shearlets of \eqref{eq:reindexed}.

\paragraph*{\underline{Coarse-Scale:}}

\noindent
The coarse-scale elements $\{\tilde{\psi}^0_{0,{\bf0},k}\}_{k\in\Z^3}$ of \eqref{eq:reindexed} are precisely those functions
$\psi^\varepsilon_{j,{\bf\ell},k}\in SH$ where $(\varepsilon,j,\ell,k)\in \Delta$. For them \eqref{eq:generators} simplifies to
$
\gamma^0_{0,{\bf0},k}={\tilde{\psi}^0_{0,{\bf0},k}}(\cdot + \mathcal{T} k)%, \quad x\in\R^3,
$,
and we just need to show \eqref{eq:shearcon} for $\hat{\gamma}^0_{0,{\bf0},k} $ with $k\in\Z^3$.
For this $\mathcal{T}=\diag(1,1,\frac{1}{11})$ is the right choice.
A calculation yields for every $(\varepsilon,j,\ell,k)\in \Delta$ and $\tilde{k}=\tilde{k}(\varepsilon,j,\ell,k)\in\Z^3$

\begin{align*}
\hat{\gamma}^0_{0,{\bf0},\tilde{k}(\varepsilon,j,\ell,k)}(\xi)
%&={\hat{\tilde{\psi}}^0_{0,{\bf0},\tilde{k}}(\varepsilon,j,\ell,k)}(\xi) \exp( 2\pi i \langle \xi,\mathcal{T}\tilde{k}(\varepsilon,j,\ell,k) \rangle) \\
%&={\hat{\psi}^\varepsilon_{j,{\bf\ell},k}}(\xi)  \exp( 2\pi i \langle \xi,k \rangle) \exp( \textstyle{\frac{ 2\pi i}{11}} \xi_3 \cdot \mathcal{N}(\varepsilon,j,\ell))   \\
=\hat{\psi}^\varepsilon_{j,{\bf\ell},k}(\xi)   \exp( {\textstyle\frac{ 2\pi i}{11}} \xi_3 \cdot \mathcal{N}(\varepsilon,j,\ell)), \quad \xi=(\xi_1,\xi_2,\xi_3)^T\in\R^3.
\end{align*}
Hence, looking at the definitions \eqref{eq:coarsescale}, \eqref{eq:bndshear3}, \eqref{eq:bndshear4}, it follows
$\hat{\gamma}^0_{0,{\bf0},k}\in C^\infty_c(\R^3)$ with
support in $[-\frac{1}{2},\frac{1}{2}]^3$.

\paragraph*{\underline{Interior Shearlets:}}

Choosing $\mathcal{T}=\diag(1,1,1)$, Equation \eqref{Ffffff} together with \eqref{eq:psifordef}
yields for $\xi\in\R^3$
\begin{align*}
	\hat{\gamma}^{\varepsilon}_{j,\ell,k}(\xi) %&=  W(2^{-2j} Z^{\varepsilon}  A^j_{\frac{1}{2},4} S^T_{\ell} Z^{-\varepsilon} \xi ) V(Z^{-\varepsilon} \xi)  \exp(-2\pi i \sprod{\xi,  k}) \exp(2\pi i \sprod{\xi,  k}), \\
= W( Z^{\varepsilon} M_{j,\ell} Z^{-\varepsilon} \xi ) V(Z^{-\varepsilon} \xi),
\end{align*}
where the matrix $M_{j,\ell}:=2^{-2j} A^j_{\frac{1}{2},4} S^T_{\ell} $ has the form
\begin{align}\label{eq:Mmmmmmm}
	M_{j,\ell} = \begin{pmatrix}
	2^{-j} & 0 & \ell_1 2^{-j} \\
	0 & 2^{-j} & \ell_2 2^{-j}\\
	0 & 0 & 1
	\end{pmatrix}.
\end{align}
We check \eqref{eq:shearcon} exemplarily for the case $\varepsilon=3$ where $Z^3=I$. A calculation yields
\begin{align*}%\label{Mmmmmmm}
M_{j,\ell}\xi=\begin{pmatrix} 2^{-j}(\xi_1 + \ell_1\xi_3) \\ 2^{-j}(\xi_2 + \ell_2\xi_3) \\ \xi_3 \end{pmatrix}, \quad \xi=(\xi_1,\xi_2,\xi_3)^T\in\R^3.
\end{align*}
Subsequently we prove that for $j\in\N_0$ and $|\ell|_\infty \le 2^j$
\begin{align}\label{eq:support1}
\supp (\hat{\gamma}^{3}_{j,\ell,k}) \subseteq \mathcal{P}_3 \cap \Big\{ \xi\in\R^3 ~:~ \textstyle{ \frac{1}{32} \le |\xi_3| \le \frac{1}{2} } \Big\}.
\end{align}
First observe that clearly $\supp (\hat{\gamma}^{3}_{j,\ell,k}) \subseteq \mathcal{P}_3 $.
Now take $\xi\in \mathcal{P}_3$ and $\xi\notin \big\{ \xi\in\R^3:\frac{1}{32} \le |\xi_3| \le \frac{1}{2} \big\}$.
With $|\xi_3|>\frac{1}{2}$ also $|[M_{j,\ell}\xi]_3|>\frac{1}{2}$ and thus $M_{j,\ell}\xi\notin \supp W$.
If on the other hand $|\xi_3|<\frac{1}{32}$ then $|[M_{j,\ell}\xi]_3|<\frac{1}{32}$ and, since $j\ge 0$, $|\xi_1|\le|\xi_3|$ and $|\ell|_\infty\le 2^j$,
\begin{align*}
|[M_{j,\ell}\xi]_1|=|2^{-j}(\xi_1 + \ell_1\xi_3)| \le 2^{-j}(1 + |\ell_1|) |\xi_3| \le 2^{-j}(1 + 2^{j}) |\xi_3| \le (1 +  2^{-j}) |\xi_3| \le 2 |\xi_3| < {\textstyle\frac{1}{16}}.
\end{align*}
Analogously, one obtains $|[M_{j,\ell}\xi]_2|<\frac{1}{16}$. Altogether, this yields $|M_{j,\ell}\xi|_\infty < \frac{1}{16}$ if $|\xi_3|<\frac{1}{32}$,
which implies $M_{j,\ell}\xi\notin \supp W$.
Consequently, $\hat{\gamma}^{3}_{j,\ell,k}(\xi)=0$ outside $\mathcal{P}_3 \cap \{ \xi\in\R^3 : \textstyle{ \frac{1}{32} \le |\xi_3| \le \frac{1}{2} } \}$, which
proves \eqref{eq:support1}.

Using analogous estimates for $\varepsilon\in\{2,3\}$ it follows that
$
\supp \hat{\gamma}^{\varepsilon}_{j,\ell,k} \subseteq \textstyle{ [-\frac{1}{2}, \frac{1}{2} ]^3\backslash(-\frac{1}{32},\frac{1}{32})^3 }
$
for every $(\varepsilon,j,\ell,k)\in\Lambda_{int}=\{ (\varepsilon,j,\ell,k)\in \{1,2,3\}\times\N_0\times \Z^2 \times\Z^3 : |\ell|_\infty<2^j \}$.

The derivatives of $\hat{\gamma}^{\varepsilon}_{j,\ell,k}$ are linear combinations of functions
$\partial^{\beta}W(Z^\varepsilon M_{j,\ell} Z^{-\varepsilon}\xi)\partial^{\delta} V(Z^{-\varepsilon}\xi)$ with coefficients uniformly bounded, since the entries of $M_{j,\ell}$ are.
This fact together with the support condition implies that \eqref{eq:shearcon} is fulfilled for
arbitrary order.

\paragraph*{\underline{Boundary Shearlets:}}

\noindent
The boundary shearlets of \eqref{eq:reindexed} satisfy $j\ge1$ and are precisely the functions in \eqref{eq:bndshear1} and \eqref{eq:bndshear2}.
We exemplarily handle the functions \eqref{eq:bndshear1}, the argumentation for the functions \eqref{eq:bndshear2} is similar.
Let us first look at the boundary between $\mathcal{P}_1$ and $\mathcal{P}_3$, i.e.\ the functions $\tilde{\psi}^\varepsilon_{j,\ell,k}$,
where $\varepsilon=3$, $j\ge1$, $\ell_1=\pm 2^j$, $|\ell_2|<2^j$, and $k\in\Z^3$.
Plugging \eqref{eq:bndshear1} into \eqref{Ffffff} and choosing $\mathcal{T}=\frac{1}{4}\diag(1,1,1)$ we obtain
\begin{align*}
	\hat{\gamma}^{3}_{j,\ell,k}(\xi) = 2^{-3} \begin{cases}
      W(M_{j,\ell}  \xi ) V (\xi) &, \xi \in M^{-1}_{j,\ell}\mathcal{P}_3; \\
      W(M_{j,\ell}  \xi ) V (N_{j,\ell} \xi)    &, \xi \in M_{j,\ell}^{-1}\mathcal{P}_1;
      \end{cases}
\end{align*}
for the generators, where $M_{j,\ell}$ is the matrix \eqref{eq:Mmmmmmm} and where
\begin{align*}
	N_{j,\ell} := \begin{pmatrix}
	- \ell_1 2^{-j} & 0 & 2^{j}- \ell^2_1 2^{-j} \\
	-\sgn(\ell_1) \ell_2 2^{-j} & 1 & \ell_2 - 2^{-j} |\ell_1| \ell_2 \\
	2^{-j} & 0 & 2^{-j} \ell_1
	\end{pmatrix}.
\end{align*}
Similar calculations as for the interior shearlets then prove
\[
\supp(\hat{\gamma}^{3}_{j,\ell,k})\subseteq\Big\{ \xi\in\R^3 ~:~ \textstyle{|\frac{\xi_1}{\xi_3}|\le 2, |\frac{\xi_2}{\xi_3}| \le 4} \Big\} \cap \Big\{ \xi\in\R^3 ~:~ {\textstyle \frac{1}{64} \le |\xi_3| \le\frac{1}{2}}\Big\}.
\]
For $\varepsilon\in\{1,2\}$ complementary results hold true.
Altogether, it follows
$
\supp \hat{\gamma}^{\varepsilon}_{j,\ell,k} \subseteq \textstyle{[-2,2]^3\backslash(-\frac{1}{64},\frac{1}{64})^3}
$
for the generators of the functions \eqref{eq:bndshear1}. Hence, condition \eqref{eq:shearcon} is fulfilled for arbitrary order.
\end{proof}

Proposition~\ref{prop:sshmol} in particular shows that the system $SH$ is a system of $\frac{1}{2}$-molecules, however not with respect to a shearlet parametrization
because of the necessary relabelling $F_{label}$ in \eqref{eq:relabel}. The actual parametrization is given by
\[
\Phi_{SH}:=\widetilde{\Phi}_{SH} \circ F_{label},
\]
where $\widetilde{\Phi}_{SH}$ denotes the $\frac{1}{2}$-shearlet parametrization of the relabeled system.

\begin{cor}
The smooth shearlet frame $SH$ is a system of $3$-dimensional $\frac{1}{2}$-molecules of order $(\infty,\infty,\infty,\infty)$
with respect to the parametrization $(\Lambda_{SH},\Phi_{SH})$.
\end{cor}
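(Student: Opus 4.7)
The plan is to chain the two preceding results with a short bookkeeping argument, since almost all the content of the corollary is already contained in Proposition~\ref{prop:sshmol} and Theorem~\ref{thm:shearletmol}. First, Proposition~\ref{prop:sshmol} shows that the relabeled collection $\widetilde{SH}=\{\widetilde{\psi}^\varepsilon_{j,\ell,k}\}_{(\varepsilon,j,\ell,k)\in\widetilde{\Lambda}_{SH}}$ is a system of $3$-dimensional $\tfrac{1}{2}$-shearlet molecules of order $(\infty,\infty,\infty,\infty)$ for the sampling data $\sigma=4$, $\eta_j=2^{-j}$, and the index-dependent translations $\mathcal{T}$ chosen within the proof of that proposition. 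Theorem~\ref{thm:shearletmol} then immediately upgrades $\widetilde{SH}$ to a system of $\tfrac{1}{2}$-molecules of the same order with respect to the $\tfrac{1}{2}$-shearlet parametrization $(\widetilde{\Lambda}_{SH},\widetilde{\Phi}_{SH})$ given by \eqref{eq:shearpara}.

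Next, I would transport this property across the relabeling. By construction $F_{label}:\Lambda_{SH}\to\widetilde{\Lambda}_{SH}$ in \eqref{eq:relabel} is a bijection, and $SH$ and $\widetilde{SH}$ are the same set of functions: $\widetilde{\psi}^{\varepsilon'}_{j',\ell',k'}=\psi^{\varepsilon}_{j,\ell,k}$ whenever $F_{label}(\varepsilon,j,\ell,k)=(\varepsilon',j',\ell',k')$. Since the $\alpha$-molecule conditions \eqref{eq:molgen} and \eqref{eq:molcon} are pointwise statements on each individual function $m_\lambda$, expressed in terms of the scale, orientation, and location assigned to $\lambda$ by the parametrization, and since the uniformity of the implicit constant in \eqref{eq:molcon} is preserved under any re-indexing, defining $\Phi_{SH}:=\widetilde{\Phi}_{SH}\circ F_{label}$ transfers the $\alpha$-molecule property verbatim from $(\widetilde{\Lambda}_{SH},\widetilde{\Phi}_{SH})$ to $(\Lambda_{SH},\Phi_{SH})$.

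No step here is really difficult: the main obstacle in establishing the corollary has already been overcome in Proposition~\ref{prop:sshmol}, where the exceptional low-scale and corner boundary shearlets — which do not fit the shearlet-molecule template in their original indexing because of their anomalous shear parameters at scale $j=0$ — are absorbed into the coarse-scale slot $\varepsilon=0$ by the bijection $F_{label}$. Once that absorption is in place, the present corollary is nothing more than the formal composition of Proposition~\ref{prop:sshmol} with Theorem~\ref{thm:shearletmol} followed by a pullback along $F_{label}$, so I would expect the write-up to be no more than a few lines.
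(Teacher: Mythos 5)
Your proposal is correct and follows essentially the same path as the paper: invoke Proposition~\ref{prop:sshmol} to get that the relabeled system $\widetilde{SH}$ is a $\tfrac{1}{2}$-shearlet molecule system of order $(\infty,\infty,\infty,\infty)$, apply Theorem~\ref{thm:shearletmol} to obtain the $\tfrac{1}{2}$-molecule property with respect to $(\widetilde{\Lambda}_{SH},\widetilde{\Phi}_{SH})$, and then pull back along the bijection $F_{label}$ to define $\Phi_{SH}:=\widetilde{\Phi}_{SH}\circ F_{label}$. The paper treats this as an immediate consequence and states the corollary without further argument, so your few extra lines spelling out why a re-indexing preserves conditions \eqref{eq:molgen} and \eqref{eq:molcon} are a harmless elaboration, not a deviation.
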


\begin{bem} \label{rem:consist}
Although $(\Lambda_{SH},\Phi_{SH})$ is not a shearlet parametrization, it clearly is $(\frac{1}{2},k)$-consistent with every $\frac{1}{2}$-shearlet parametrization for $k>3$.
This follows from Proposition~\ref{prop:shconsist} and the observation that relabelling of elements does not make any difference here.
\end{bem}

\subsubsection{Compactly Supported Pyramid-adapted Shearlets}

There also exist shearlet frames for $L^2(\R^2)$ and $L^2(\R^3)$ consisting of compactly supported functions.
Compactly supported frames of the form \eqref{eq:affineshear} have been constructed in \cite{Kutyniok2012,Kittipoom2010}.
They are also instances of $\frac{1}{2}$-shearlet molecules and their order can be controlled by the regularity of the generators.

\begin{prop} \label{prop:compactShearlets}
	Let $\phi,\psi^1,\psi^2,\psi^3\in L^2(\R^3)$ be compactly supported and $L,M,N_1,N_2\in\N_0\cup\{\infty\}$. If
    $\phi\in C^{N_1+N_2}(\R^3)$ and if, for every $\varepsilon\in\{1,2,3\}$,
	\begin{enumerate}[(i)]
	\item the derivatives $\partial^{\gamma}\psi^\varepsilon$ exist and are continuous for every $\gamma\in \N_0^3$ with $[Z^\varepsilon\gamma]_1 , [Z^\varepsilon\gamma]_2 \leq N_1+N_2$
          and $[Z^\varepsilon\gamma]_3 \leq N_1$, where $Z$ is the cyclic permutation matrix \eqref{eq:cyclicPerm},
	\item the generator $\psi^\varepsilon$ has $M+L$ directional vanishing moments in $e_\varepsilon$-direction, i.e.
    \begin{align*}%\label{eq:directmom}
    \forall (x_1,x_2)\in\R^2:  \int_\R \psi^\varepsilon(Z^\varepsilon x)x_3^N \,dx_3 = 0 \quad\text{for every $N\in\{0,\ldots,M+L-1\}$},
    \end{align*}
	\end{enumerate}
	 then the system \eqref{eq:affineshear} obtained from these generators is a system of {$\frac{1}{2}$-shearlet} molecules of order $(L,M,N_1,N_2)$.
\end{prop}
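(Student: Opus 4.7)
The plan is to begin by pinning down the generators explicitly. A direct computation shows $S_{\ell}A^j_{\frac{1}{2},2}=A^j_{\frac{1}{2},2}S_{\ell\eta_j}$ with $\eta_j=2^{-j/2}$, and setting $\mathcal{T}_\varepsilon:=Z^\varepsilon\mathcal{T}Z^{-\varepsilon}$ (a diagonal matrix obtained from $\mathcal{T}$ by permuting entries, which is admissible under the per-orientation variation allowed after Definition~\ref{def:shearAlphaMol1}), each shearlet in \eqref{eq:affineshear} takes exactly the form \eqref{eq:sheargen} with the single generator $\gamma^\varepsilon_{j,\ell,k}=\psi^\varepsilon$, independent of $(j,\ell,k)$. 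Hence the entire task reduces to verifying \eqref{eq:shearcon} on $\hat\psi^\varepsilon$ uniformly in $j\in\N_0$.

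Since the right-hand side of \eqref{eq:shearcon} is non-increasing in $j$, it suffices to establish its limit at $j\to\infty$. Performing the cyclic coordinate change $\xi=Z^\varepsilon\eta$ and introducing $\Psi:=\psi^\varepsilon\circ Z^{-\varepsilon}$, hypothesis (ii) becomes the statement that $\Psi$ has $M+L$ vanishing moments in the $e_d$-direction, while the permuted form of (i) becomes the clean anisotropic statement that $\partial^\delta\Psi$ is bounded for every $\delta$ with $\delta_d\le N_1$ and $\delta_i\le N_1+N_2$ for $i\ne d$. In these coordinates, what I need to prove is
\[
|\partial^\rho\hat\Psi(\eta)|\lesssim\frac{\min\{1,|\eta_d|\}^M}{\langle|\eta|\rangle^{N_1}\langle|\eta|_{[d-1]}\rangle^{N_2}},\qquad|\rho|_1\le L.
\]

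I would establish this in two regimes. When $|\eta_d|\ge 1$, the minimum factor is $1$; integration by parts on the compactly supported $x^\rho\Psi$, combined with the Leibniz rule and the smoothness from (i), yields $|\eta^\gamma\partial^\rho\hat\Psi(\eta)|\lesssim 1$ for every multi-index $\gamma$ with $\gamma_d\le N_1$ and $\gamma_i\le N_1+N_2$ for $i\ne d$; summing and invoking the elementary inequality $(1+|\eta|)^{N_1}(1+|\eta|_{[d-1]})^{N_2}\lesssim(1+|\eta_d|)^{N_1}(1+|\eta|_{[d-1]})^{N_1+N_2}$ closes this regime. When $|\eta_d|\le 1$, the vanishing moments force $\partial^\rho\hat\Psi$ to vanish to order at least $M+L-\rho_d\ge M$ along $\eta_d=0$, so Taylor's theorem in $\eta_d$ extracts the factor $\eta_d^M$:
\[
\partial^\rho\hat\Psi(\eta)=\frac{\eta_d^M}{(M-1)!}\int_0^1(1-t)^{M-1}\,\partial^{\rho+Me_d}\hat\Psi(\eta_1,\dots,\eta_{d-1},t\eta_d)\,dt.
\]
A second integration by parts on the still-compactly-supported $x^{\rho+Me_d}\Psi$ bounds the integrand uniformly in $t\in[0,1]$ by $\langle|\eta|_{[d-1]}\rangle^{-(N_1+N_2)}$; combined with the extracted $|\eta_d|^M$ and the equivalence $\langle|\eta|\rangle\asymp\langle|\eta|_{[d-1]}\rangle$ valid when $|\eta_d|\le 1$, this produces the required bound. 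The coarse-scale element $\phi$ is handled identically with $M=0$.

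The main obstacle will be the careful bookkeeping that translates the permuted-index smoothness hypothesis (i) into the clean anisotropic smoothness on $\Psi$, and confirms that all derivatives arising in the Leibniz expansion of $x^{\rho+Me_d}\Psi$ stay within the smoothness budget provided by (i). In particular, since for the $|\eta_d|\le 1$ regime only multi-indices $\gamma$ with $\gamma_d=0$ are needed to extract $\langle|\eta|_{[d-1]}\rangle$-decay, the Leibniz expansion never differentiates the factor $x_d^{\rho_d+M}$ with respect to $x_d$, and the resulting derivatives of $\Psi$ have $e_d$-order $0$, safely inside $N_1$. Once this verification is discharged, both regimes close by routine computation.
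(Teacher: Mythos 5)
Your overall strategy is sound and in the same spirit as the paper's own proof, which is a one-line reference to the bivariate analogue \cite[Proposition~3.11]{Grohs2011}: reduce to the single constant generator $\psi^\varepsilon$ (the commutation $S_\ell A^j_{\frac12,2}=A^j_{\frac12,2}S_{\ell\eta_j}$, the handling of $Z^\varepsilon\mathcal{T}Z^{-\varepsilon}$ via the allowed variation of the translation matrix, and the monotonicity in $j$ are all correct), permute coordinates, and split into the two regimes $\abs{\eta_d}\geq 1$ (integration by parts) and $\abs{\eta_d}\leq 1$ (Taylor in $\eta_d$ using the vanishing moments, then integration by parts in the remaining variables).

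There is, however, a concrete sign error in the coordinate normalization that causes the central bookkeeping step to fail. With the substitution $\xi=Z^\varepsilon\eta$, the function whose Fourier transform reproduces $\hat{\psi^\varepsilon}(\xi)$ in the $\eta$-variable is $\Psi:=\psi^\varepsilon\circ Z^\varepsilon$, not $\psi^\varepsilon\circ Z^{-\varepsilon}$ (for a permutation matrix $\widehat{f\circ Z^\varepsilon}=\hat f\circ Z^\varepsilon$, and one needs $\hat\Psi(\eta)=\hat{\psi^\varepsilon}(Z^\varepsilon\eta)$). Once the correct $\Psi=\psi^\varepsilon\circ Z^\varepsilon$ is used, hypothesis~(ii) does translate to $e_d$-moments for $\Psi$, exactly as you claim. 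But hypothesis~(i), read literally with $Z^\varepsilon$, does \emph{not} translate to the claimed ``$\delta_d\leq N_1$, $\delta_i\leq N_1+N_2$ for $i\neq d$'' statement: since $\partial^\delta\Psi=(\partial^{Z^\varepsilon\delta}\psi^\varepsilon)\circ Z^\varepsilon$, the constraint $[Z^\varepsilon(Z^\varepsilon\delta)]_3\leq N_1$ becomes $[Z^{2\varepsilon}\delta]_3\leq N_1$, which for $\varepsilon=1$ is $\delta_1\leq N_1$ — reduced smoothness of $\Psi$ in direction $e_\varepsilon$, not $e_d$. Your $\Psi=\psi^\varepsilon\circ Z^{-\varepsilon}$ would fix (i) but then break (ii) in the same way, so with the hypotheses as written, one of the two translations always fails for $\varepsilon\neq d$. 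What the estimate \eqref{eq:shearcon} actually demands, and what the corollary following the proposition explicitly states (``$\gamma_\varepsilon\leq 5$''), is that the reduced-smoothness direction of $\psi^\varepsilon$ be $e_\varepsilon$, i.e.\ that (i) should read $[Z^{-\varepsilon}\gamma]_3\leq N_1$. The proof needs to flag and resolve this mismatch rather than assert both translations are clean; once the hypotheses align, the two-regime argument closes as you describe.
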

\begin{proof}
    It is obvious that -- rightly indexed -- a system of the form \eqref{eq:affineshear} constitutes a system of \linebreak $\frac{1}{2}$-shearlet molecules.
    Hence we just need to verify the order of the system. For this, little more is needed than utilizing the facts that
    spatial decay implies smoothness in Fourier domain (and vice versa), and that vanishing moments in spatial domain implies estimates of the form $\abs{\hat{g}(\xi)}\lesssim \min(1, \abs{\xi})^M$ in Fourier domain. We refer to \cite[Proposition 3.11]{Grohs2011} for details, where a similar two-dimensional version of the theorem is proven.
\end{proof}

%%%%%%%%%%%%%%%%%%%%%%%%%%%%%%%%%%%%%%%%%%%%%%%%%%%%%%%%%%%%%%%%%%%%%%%%%%%%%%%%%%%%%%%%%%%%%%%%%%%%%%%%%%%%%%%
%%%%%%%%%%%%%%%%%%%%%%%%%%%%%%%%%%%%%%%%%%%%%%%%%%%%%%%%%%%%%%%%%%%%%%%%%%%%%%%%%%%%%%%%%%%%%%%%%%%%%%%%%%%%%%%
%%%%%%%%%%%%%%%%%%%%%%%%%%%%%%%%%%%%%%%%%%%%%%%%%%%%%%%%%%%%%%%%%%%%%%%%%%%%%%%%%%%%%%%%%%%%%%%%%%%%%%%%%%%%%%%
%%%%%%%%%%%%%%%%%%%%%%%%%%%%%%%%%%%%%%%%%%%%%%%%%%%%%%%%%%%%%%%%%%%%%%%%%%%%%%%%%%%%%%%%%%%%%%%%%%%%%%%%%%%%%%%
%%%%%%%%%%%%%%%%%%%%%%%%%%%%%%%%%%%%%%%%%%%%%%%%%%%%%%%%%%%%%%%%%%%%%%%%%%%%%%%%%%%%%%%%%%%%%%%%%%%%%%%%%%%%%%%

%---------------------------------------------------------------%

\section{Proof of Theorem~\ref{thm:almorth}} \label{sec:proofmain}

%---------------------------------------------------------------%

This final section is devoted to the technical proof of Theorem~\ref{thm:almorth}. It is split up into several pieces and has the same general structure as the proof of the corresponding 2-dimensional result \cite[Theorem~4.2]{GKKS2014}.
In $d$ dimensions however it takes more effort and the arguments are more involved.

%---------------------------------------------------------------%

\subsection{Auxiliary Lemmas}

 Let us first collect some simple elementary facts, which turn out to be useful.
Subsequently $\mathcal{O}(d,\R)$ shall denote the orthogonal group of $\R^d$. Recall also the notation $\{\theta\}$ for the `projection' of
$\theta\in\R$ onto the interval $[-\frac{\pi}{2},\frac{\pi}{2})$ in the sense of Subsection~\ref{ssec:index}.

\begin{lemm} \label{lem:sinest}
For $\theta \in \R$ let $\{\theta\}$ denote its `projection' onto the interval $[-\frac{\pi}{2},\frac{\pi}{2})$ as introduced in Subsection~\ref{ssec:index}. It then holds
$\abspi{\theta} \asymp \abs{\sin(\theta)}$.
\end{lemm}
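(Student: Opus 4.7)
The plan is to reduce the problem to the standard fact that $|\sin(x)| \asymp |x|$ on the interval $[-\pi/2, \pi/2]$, exploiting the $\pi$-periodicity of $|\sin|$.

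First, I would observe that since $\sin(\theta + n\pi) = (-1)^n \sin(\theta)$ for any $n \in \Z$, the function $|\sin|$ is $\pi$-periodic. Because $\{\theta\}$ differs from $\theta$ by an integer multiple of $\pi$ by definition, this immediately gives $|\sin(\theta)| = |\sin(\{\theta\})|$. This reduces the claim to showing $|\{\theta\}| \asymp |\sin(\{\theta\})|$, where $\{\theta\} \in [-\pi/2, \pi/2)$.

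Next, I would invoke the classical Jordan-type inequality: for every $x \in [-\pi/2, \pi/2]$ one has
\[
\tfrac{2}{\pi}|x| \;\le\; |\sin(x)| \;\le\; |x|.
\]
The upper bound follows from $|\sin'| \le 1$ together with $\sin(0)=0$; the lower bound follows from the concavity of $\sin$ on $[0,\pi/2]$ (so the graph lies above the chord joining $(0,0)$ to $(\pi/2,1)$), combined with the oddness of $\sin$. Applied at $x = \{\theta\}$, these two inequalities give $|\{\theta\}| \asymp |\sin(\{\theta\})|$, which combined with the previous step yields $|\{\theta\}| \asymp |\sin(\theta)|$, as claimed.

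There is no real obstacle here; the lemma is essentially a bookkeeping statement bridging the chosen representative $\{\theta\}$ of the angle (modulo $\pi$) with the sine function, which is the natural quantity appearing when estimating inner products of directional molecules. The only point requiring care is to notice that the half-open convention $[-\pi/2, \pi/2)$ does not spoil the two-sided estimate, since $\sin$ is bounded away from zero near the excluded endpoint $\pi/2$.
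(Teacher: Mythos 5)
Your proof is correct and follows essentially the same route as the paper: reduce via $\pi$-periodicity of $|\sin|$ to the representative $\{\theta\}\in[-\pi/2,\pi/2)$ and then apply the two-sided Jordan inequality $\frac{2}{\pi}|x|\le|\sin x|\le|x|$ on that interval. The paper states this in two lines without the brief justifications you supply for the upper and lower bounds, but the argument is identical.
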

\begin{proof}
Due to $\pi$-periodicity it suffices to verify the relation for $\theta\in[-\frac{\pi}{2},\frac{\pi}{2})$. In this range we have
$
\frac{2}{\pi} |\theta| \le |\sin(\theta)| \le |\theta|.
$
\end{proof}

\noindent
An immediate corollary is the following result. Recall the notation $d_\sph(v,w)$ for
the angle $\arccos(\langle v,w\rangle)\in[0,\pi]$ between two vectors $v,w\in\mathbb{S}^{d-1}$.

\begin{lemm} \label{lem:raygeo}
Let $e_d\in\R^d$ be the $d$:th unit vector. For $\eta\in\mathbb{S}^{d-1}$ we have $\abspi{d_\sph(\eta,e_d)} \asymp \otnorm{\eta}$.
\end{lemm}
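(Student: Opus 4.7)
The plan is to reduce this immediately to Lemma~\ref{lem:sinest} by exploiting the fact that $\eta$ lies on the unit sphere, so its component along $e_d$ and the norm of its projection onto the orthogonal hyperplane are tied together by the Pythagorean relation.

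First I would set $\theta := d_\sph(\eta, e_d) \in [0,\pi]$, which by definition satisfies $\cos\theta = \langle \eta, e_d\rangle = [\eta]_d$. Since $\theta \in [0,\pi]$ we have $\sin\theta \geq 0$, so
\[
\sin\theta = \sqrt{1 - [\eta]_d^2}.
\]
Using $|\eta|^2 = 1$, i.e.\ $\sum_{i=1}^{d-1}[\eta]_i^2 = 1 - [\eta]_d^2$, this gives $\sin\theta = \otnorm{\eta}$, recalling that $\otnorm{\eta} = |([\eta]_1,\ldots,[\eta]_{d-1},0)^T|_2$.

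Finally, applying Lemma~\ref{lem:sinest} to $\theta$ yields $|\{\theta\}| \asymp |\sin\theta|$, and combining these two identities produces
\[
\abspi{d_\sph(\eta,e_d)} = |\{\theta\}| \asymp |\sin\theta| = \otnorm{\eta},
\]
which is exactly the claim. There is no real obstacle here: the statement is essentially a reformulation of the spherical identity $\sin^2\theta + \cos^2\theta = 1$ combined with the preceding lemma, and no case distinction on the quadrant of $\theta$ is needed since $\sin\theta$ is automatically nonnegative on $[0,\pi]$.
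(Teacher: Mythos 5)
Your proof is correct and follows essentially the same structure as the paper's: both establish the exact identity $\otnorm{\eta} = \sin\big(d_\sph(\eta,e_d)\big)$ and then invoke Lemma~\ref{lem:sinest}. The only difference is how that identity is obtained: the paper chooses a block rotation $R\in\mathcal{O}(d,\R)$ fixing $e_d$ to bring $\eta$ into the $(e_1,e_d)$-plane and reads off $|\eta|_{[d-1]} = |\sin\theta|$ from the rotated coordinates, whereas you derive it directly from $\cos\theta = [\eta]_d$ and the Pythagorean relation $\sum_{i<d}[\eta]_i^2 = 1-[\eta]_d^2$, using that $\sin\theta\ge 0$ on $[0,\pi]$. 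Your computation is more elementary and slightly shorter, but the underlying argument is the same.
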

\begin{proof}
    Using a suitable rotation $R\in\mathcal{O}(d,\R)$ of the form
    \[
    R=\begin{pmatrix}
    R_{d-1} & 0 \\ 0 & 1
    \end{pmatrix},
    \]
    where $R_{d-1}\in \mathcal{O}(d-1,\R)$, we can achieve
	$
		R\eta=(\sin(\theta), 0,  \ldots, 0, \cos(\theta))^T
	$
    with $\theta=d_\sph(\eta, e_d)$.
    Since $|\eta|_{[d-1]}=|\eta - e_d|_{[d-1]}=|R(\eta - e_d)|_{[d-1]}=|R\eta -e_d|_{[d-1]}=|\sin(\theta)|$, it just remains to prove
	$\abs{\sin(\theta)} \asymp \abspi{\theta}$, which is true by Lemma~\ref{lem:sinest}.
\end{proof}

\begin{lemm} \label{lem:metrEquiv}
Let $c>0$ be a constant. Then we have for all $v,w\in\mathbb{S}^{d-1}$ with
$[v]_d\ge c$ and $[w]_d\ge 0$
\begin{align*}
	|\{ d_{\mathbb{S}}(v,w) \}| \asymp |v-w|.
\end{align*}
\end{lemm}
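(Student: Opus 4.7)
The plan is to set $\theta := d_{\mathbb{S}}(v,w) \in [0,\pi]$ and exploit the identity $|v-w| = 2\sin(\theta/2)$, which follows from $|v-w|^2 = 2 - 2\cos\theta$ for unit vectors. Since the ``projection'' $\{\cdot\}$ sends $[0,\pi/2)$ to itself and $[\pi/2,\pi]$ to $[-\pi/2,0]$ via $\theta \mapsto \theta - \pi$, we have $|\{\theta\}| = \theta$ when $\theta \le \pi/2$ and $|\{\theta\}| = \pi-\theta$ otherwise. I would therefore split the argument into these two cases.

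In the easy case $\theta \in [0,\pi/2]$, the half-angle $\theta/2$ lies in $[0,\pi/4]$, so $\sin(\theta/2) \asymp \theta/2$, giving $|v-w| \asymp \theta = |\{\theta\}|$ at once.

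The remaining case $\theta \in (\pi/2, \pi]$ is where the hypotheses $[v]_d \ge c$ and $[w]_d \ge 0$ enter. The key step is to establish a uniform upper bound $\theta \le \pi - \delta(c)$ for some $\delta(c)>0$. This will follow from a one-line Cauchy--Schwarz computation:
\begin{align*}
\langle v,w\rangle = \sum_{i=1}^{d-1}[v]_i[w]_i + [v]_d[w]_d \ge -|v|_{[d-1]}\,|w|_{[d-1]} + 0 \ge -\sqrt{1-c^2},
\end{align*}
where $|v|_{[d-1]} = \sqrt{1-[v]_d^2} \le \sqrt{1-c^2}$ and $[v]_d[w]_d \ge 0$ by assumption. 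Hence $\cos\theta \ge -\sqrt{1-c^2}$ and $\theta \le \arccos(-\sqrt{1-c^2}) =: \pi - \delta < \pi$. With this, in the second case $|v-w| = 2\sin(\theta/2)$ ranges over a compact subset of $(0,2]$ bounded away from $0$, and similarly $|\{\theta\}| = \pi - \theta \in [\delta, \pi/2)$ is bounded away from $0$ and $\infty$. Thus both quantities are $\asymp 1$, and in particular comparable to each other.

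The main (and essentially only) obstacle is recognizing that the constraints on $[v]_d$ and $[w]_d$ must be used precisely to exclude the degenerate possibility $\theta \to \pi$, for otherwise $|\{\theta\}|$ could tend to $0$ while $|v-w|$ stays near $2$. Once that uniform gap is secured, the two-case analysis is essentially a one-variable estimate on $\sin(\theta/2)$.
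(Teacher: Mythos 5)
Your proof is correct and follows essentially the same approach as the paper: both arguments hinge on the observation that the hypotheses $[v]_d\ge c$, $[w]_d\ge 0$ force a uniform upper bound $d_\sph(v,w)\le \pi-\delta(c)$, after which $|\{d_\sph(v,w)\}|\asymp d_\sph(v,w)\asymp|v-w|$ follows. The paper merely asserts the existence of this gap and then cites $d_\sph(v,w)\asymp|v-w|$ as a general fact, whereas you supply the explicit Cauchy--Schwarz computation for the gap and make the comparison concrete via the half-angle identity $|v-w|=2\sin(d_\sph(v,w)/2)$ -- a slightly more self-contained presentation of the same idea.
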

\begin{proof}
Under the assumptions there exists $\varepsilon>0$ dependent on $c$, such that
$
0\le d_{\mathbb{S}}(v,w) \le \pi-\varepsilon.
$
It follows $ \frac{\varepsilon}{\pi-\varepsilon} | d_{\mathbb{S}}(v,w) |   \le |\{ d_{\mathbb{S}}(v,w) \}| \le | d_{\mathbb{S}}(v,w) | $.
The observation
$
d_{\mathbb{S}}(v,w) \asymp |v-w|
$
finishes the proof.
\end{proof}

\begin{lemm}
\label{lem:rotest}
Let $R\in\mathcal{O}(d,\R)$ be a rotation and $\theta_0=d_{\mathbb{S}}(e_d, R e_d)\in[0,\pi]$ the angle between the d:th unit vector $e_d\in\R^d$ and its image $Re_d$ under $R$.
Then it holds for all $\eta\in\mathbb{S}^{d-1}$
\begin{align*}
|R \eta|_{[d-1]} = \sin(d_{\mathbb{S}}(R \eta, e_d)) \geq \min\{\abs{\sin(d_{\mathbb{S}}(\eta,e_d) + \theta_0)}, \abs{\sin(d_{\mathbb{S}}(\eta,e_d) - \theta_0)}\}.
\end{align*}
\end{lemm}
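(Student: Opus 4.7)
The plan has two main ingredients. First I would establish the equality $|R\eta|_{[d-1]} = \sin(d_\sph(R\eta, e_d))$, which in fact holds for any $v \in \sph^{d-1}$: $|v|_{[d-1]}^2 = 1 - [v]_d^2 = 1 - \cos^2(d_\sph(v, e_d)) = \sin^2(d_\sph(v, e_d))$, and since $d_\sph(v, e_d) \in [0, \pi]$ the sine is nonnegative and the identity follows. With this in hand, setting $\alpha := d_\sph(\eta, e_d)$ and $\beta := d_\sph(R\eta, e_d)$, the task reduces to proving
\[
\sin \beta \;\geq\; \min\{|\sin(\alpha + \theta_0)|,\, |\sin(\alpha - \theta_0)|\}.
\]

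Next I would apply the spherical triangle inequality twice. Applied to the geodesic triangle with vertices $e_d$, $R e_d$, $R\eta$, whose side lengths are $\theta_0$, $\beta$, and $d_\sph(Re_d, R\eta) = \alpha$ (using that $R$ is an isometry of the sphere), it yields $|\alpha - \theta_0| \leq \beta \leq \alpha + \theta_0$. This already settles the case $\alpha + \theta_0 \leq \pi$. The harder case $\alpha + \theta_0 > \pi$, which I expect to be the main obstacle, requires a sharper upper bound on $\beta$, since naively one only gets $\beta \le \pi$ and hence the trivial estimate $\sin\beta \ge 0$. For that I would apply the triangle inequality a second time to the triangle with vertices $-e_d$, $Re_d$, $R\eta$, whose sides have lengths $\pi - \theta_0$, $\alpha$, and $\pi - \beta$. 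The inequality $\alpha \leq (\pi - \theta_0) + (\pi - \beta)$ then yields $\beta \leq 2\pi - \alpha - \theta_0$. In either case, $\beta$ lies in a subinterval $[\,|\alpha - \theta_0|,\, b\,] \subseteq [0, \pi]$ with $b := \min\{\alpha + \theta_0,\, 2\pi - \alpha - \theta_0\}$.

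Finally, since $\sin$ is concave and nonnegative on $[0, \pi]$, its minimum on any subinterval is attained at one of the endpoints, so $\sin \beta \geq \min\{\sin(|\alpha - \theta_0|),\, \sin(b)\}$. The first value equals $|\sin(\alpha - \theta_0)|$ as $|\alpha - \theta_0| \in [0, \pi]$. A short case distinction shows $\sin(b) = |\sin(\alpha + \theta_0)|$ in both regimes: directly when $\alpha + \theta_0 \leq \pi$, and via $\sin(2\pi - x) = -\sin x$ combined with $\sin(\alpha + \theta_0) \leq 0$ for $\alpha + \theta_0 \in [\pi, 2\pi]$ otherwise. Together with the first ingredient this produces the claimed lower bound on $|R\eta|_{[d-1]}$.
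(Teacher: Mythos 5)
Your proof is correct, and it takes a genuinely different route from the paper's. The paper proceeds by direct linear algebra: it reduces (by a block-diagonal rotation preserving $|\cdot|_{[d-1]}$) to a rotation $R_{\theta_0}$ in the $(e_1,e_d)$-plane, expands $|R_{\theta_0}\eta|_{[d-1]}^2 = 1-(\eta_1\sin\theta_0 + \cos\theta_1\cos\theta_0)^2$ where $\theta_1 = d_\sph(\eta,e_d)$, observes that this is a downward-opening quadratic in $\eta_1\in[-\sin\theta_1,\sin\theta_1]$, and evaluates at the endpoints to get $\sin^2(\theta_1\pm\theta_0)$. Your argument instead works purely on the metric level: after reducing to a statement about $\alpha := d_\sph(\eta,e_d)$, $\beta := d_\sph(R\eta,e_d)$, $\theta_0$, you trap $\beta$ between $|\alpha-\theta_0|$ and $\min\{\alpha+\theta_0,\,2\pi-\alpha-\theta_0\}$ by applying the spherical triangle inequality once at the north pole $e_d$ and once at the south pole $-e_d$, then invoke concavity of $\sin$ on $[0,\pi]$ to place the minimum at an endpoint. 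The second triangle inequality at $-e_d$ is the key step that resolves the regime $\alpha+\theta_0>\pi$, where the naive bound only gives $\beta\le\pi$; this is exactly the case the paper handles implicitly via squaring. Your approach is more geometric and coordinate-free (it never uses that $R$ can be conjugated into a planar rotation), and it also sidesteps the slightly under-justified step in the paper where $R$ is claimed to factor as $\tilde R R_{\theta_0}$ with $\tilde R$ block-diagonal --- a factorization that does not hold for a generic $R$ as written, though the inequality the paper extracts from it is correct and can be repaired (e.g.\ by Cauchy--Schwarz on $\langle\eta, R^Te_d\rangle$). Both proofs are valid; yours is arguably cleaner.
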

Note $d_{\mathbb{S}}(\eta,e_d)=d_{\mathbb{S}}(R\eta,Re_d)$.
\begin{proof}
    Let $\eta=(\eta_1,\ldots,\eta_d)^T\in\mathbb{S}^{d-1}$ and put $\theta_1:=d_{\mathbb{S}}(\eta, e_d)= \arccos(\langle\eta,e_d\rangle)\in[0,\pi]$.
    The rotation $R\in\mathcal{O}(d,\R)$ can be decomposed in the form
    $
    R=\tilde{R} R_{\theta_0}
    $
    with $\tilde{R},R_{\theta_0}\in\mathcal{O}(d,\R)$ such that
     \[
     \tilde{R}=\begin{pmatrix} R_{d-1} & \\   & 1  \end{pmatrix} \quad\text{and}\quad  R_{\theta_0}=\begin{pmatrix} \cos(\theta_0) & & -\sin(\theta_0) \\  & I_{d-2} & \\ \sin(\theta_0) & & \cos(\theta_0) \end{pmatrix},
     \]
     where $R_{d-1}\in\mathcal{O}(d-1,\R)$ is some $(d-1)$-dimensional rotation matrix and $I_{d-2}$ is the $(d-2)$-dimensional identity matrix. The rotation $\tilde{R}$ leaves $|\cdot|_{[d-1]}$ invariant, whence
     \[
     |R \eta|_{[d-1]} = |\tilde{R} R_{\theta_0} \eta|_{[d-1]} =  |R_{\theta_0} \eta|_{[d-1]}.
     \]
    Using $\eta_d=\cos(\theta_1)$ and $|\eta|^2_{[d-1]}=\eta_1^2 +  \eta_2^2 + \ldots + \eta_{d-1}^2 =1- \eta_d^2$, it further follows
	\begin{align*}
		\otnorm{R_{\theta_0}\eta }^2&= ( \cos(\theta_0) \eta_1 - \sin(\theta_0)\eta_d)^2 + \eta_2^2 + \dots + \eta_{d-1}^2 \\
                                                     &=  \cos^2(\theta_0)\eta_1^2 + \sin^2(\theta_0)\cos^2(\theta_1) - 2 \cos(\theta_0)\sin(\theta_0)\eta_1\cos(\theta_1) + (1 - \eta_1^2 - \cos^2(\theta_1)) \\
                                                     &= 1 - ( \eta_1\sin(\theta_0) + \cos(\theta_1)\cos(\theta_0))^2.
	\end{align*}
	The last expression is a second-degree polynomial in the variable $\eta_1$ with a negative leading coefficient. Since
    $\eta_1^2 \le 1 -\eta_d^2 = 1- \cos^2(\theta_1) = \sin^2(\theta_1)$, the variable $\eta_1$ can take values only in
	$  [- \sin(\theta_1), \sin(\theta_1)]$. The polynomial attains its minimum on this interval at the endpoints.
    Hence, we can conclude
	\begin{align*}
	 \otnorm{R_{\theta_0}\eta }^2 &\ge \min_{\epsilon\in\{-1,1\}}  \big\{1 - ( \epsilon\sin(\theta_1)\sin(\theta_0) + \cos(\theta_1)\cos(\theta_0))^2 \big\} \\
      &=  \min_{\epsilon\in\{-1,1\}} \big\{ 1- \cos^2(\theta_1 - \epsilon \theta_0) \big\} = \min_{\epsilon\in\{-1,1\}} \big\{ \sin^2(\theta_1 - \epsilon \theta_0) \big\},
	\end{align*}
	which proves the claim.
\end{proof}

%---------------------------------------------------------------%

\subsection{Integral Estimates}

We start with an estimate for the generators in \eqref{eq:molgen}, which will allow us to work in polar coordinates.

\begin{lemm} \label{alem:forParaEst}
Let the family of functions $\{g_{\lambda}\}_{\lambda\in\Lambda}$ satisfy \eqref{eq:molcon} uniformly for a multi-index $\rho\in\N^d_0$,
and assume that there is a constant $c>0$ such that $s_\lambda \geq c$ for all $\lambda\in\Lambda$.
Then the following estimate holds true uniformly for $\lambda\in\Lambda$ and $\xi\in\R^d$
\begin{align} \label{paraproprad}
	\abs{(\partial^{\rho}\hat{g}_{\lambda})( \matricol{\lambda} \xi)} \lesssim \wfuncrad{\lambda}{M}{N_1}{N_2}.
\end{align}
\end{lemm}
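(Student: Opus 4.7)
The plan is to substitute the argument $\eta := A^{-1}_{\alpha,s_\lambda}R_{\theta_\lambda}R_{\varphi_\lambda}\xi$ directly into the hypothesis \eqref{eq:molcon} and verify that each of the three factors is dominated by the corresponding factor in the weighting function $\wfuncrad{\lambda}{M}{N_1}{N_2}$. To streamline notation I would abbreviate $\zeta := R_{\theta_\lambda}R_{\varphi_\lambda}\xi$, noting $|\zeta|=|\xi|$ since rotations are isometries, and record the explicit identities $[\eta]_d = s_\lambda^{-1}[\zeta]_d$ and $\otnorm{\eta} = s_\lambda^{-\alpha}\otnorm{\zeta}$, which follow from the diagonal structure $A_{\alpha,s_\lambda}^{-1} = \diag(s_\lambda^{-\alpha},\dots,s_\lambda^{-\alpha},s_\lambda^{-1})$.

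The first factor is the easiest: factoring out $s_\lambda^{-1}$ yields
\[
s_\lambda^{-1} + \abs{[\eta]_d} + s_\lambda^{-(1-\alpha)}\otnorm{\eta} = s_\lambda^{-1}\bigl(1 + \abs{[\zeta]_d} + \otnorm{\zeta}\bigr) \asymp s_\lambda^{-1}(1+|\xi|),
\]
using the elementary equivalence $1+\abs{[\zeta]_d}+\otnorm{\zeta}\asymp 1+|\zeta|$. Raising to the $M$-th power and taking the minimum with $1$ gives the desired numerator. The third factor is even more immediate: by the identity for $\otnorm{\eta}$,
\[
\angles{\otnorm{\eta}}^{-N_2} \asymp (1+s_\lambda^{-\alpha}\otnorm{R_{\theta_\lambda}R_{\varphi_\lambda}\xi})^{-N_2},
\]
which is exactly the required third factor.

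The remaining factor $\angles{|\eta|}^{-N_1}$ is where the assumption $s_\lambda \geq c$ enters. From the decomposition $|\eta|^2 = s_\lambda^{-2\alpha}\otnorm{\zeta}^2 + s_\lambda^{-2}[\zeta]_d^2$, the minimum singular value of $A_{\alpha,s_\lambda}^{-1}$ yields $|\eta| \geq \min(s_\lambda^{-\alpha},s_\lambda^{-1})|\xi|$. The main (though minor) obstacle is that $s_\lambda^{-\alpha}$ is smaller than $s_\lambda^{-1}$ precisely when $s_\lambda < 1$; here I would invoke the lower bound $s_\lambda \geq c$, which forces $s_\lambda^{-\alpha} \geq c^{1-\alpha}\,s_\lambda^{-1}$ on the range $c\le s_\lambda<1$, so in either regime $|\eta|\gtrsim s_\lambda^{-1}|\xi|$ uniformly in $\lambda$, with implicit constant depending only on $c$ and $\alpha$. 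Consequently $\angles{|\eta|}^{-N_1} \asymp (1+|\eta|)^{-N_1} \lesssim (1+s_\lambda^{-1}|\xi|)^{-N_1}$, and multiplying the three estimates together yields exactly \eqref{paraproprad}.
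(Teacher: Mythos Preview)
Your proof is correct and follows essentially the same approach as the paper: substitute $\eta = A^{-1}_{\alpha,s_\lambda}R_{\theta_\lambda}R_{\varphi_\lambda}\xi$ into \eqref{eq:molcon}, use the diagonal structure of $A_{\alpha,s_\lambda}^{-1}$ to read off $[\eta]_d$ and $\otnorm{\eta}$, and invoke $s_\lambda\ge c$ to ensure $\min(s_\lambda^{-\alpha},s_\lambda^{-1})\gtrsim s_\lambda^{-1}$ for the $\angles{|\eta|}^{-N_1}$ factor. The paper's version is slightly terser (it does not split into the regimes $s_\lambda\ge 1$ versus $c\le s_\lambda<1$), but the ingredients and logic are identical.
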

\begin{proof}
We have $|A^{-1}_{\alpha,s_\lambda}\xi| \geq \min\{s_\lambda^{-1},s_\lambda^{-\alpha}\}\abs{\xi} \gtrsim s_\lambda^{-1}\abs{\xi}$
uniformly for $\xi \in \R^d$ and $\lambda\in\Lambda$, since $s_\lambda \geq c >0$ for every $\lambda\in\Lambda$.
It follows $|\matricol{\lambda} \xi| \gtrsim s_\lambda^{-1} \abs{\rotmatr{\lambda}\xi} = s_\lambda^{-1} |\xi| $.  Further,
we observe
$|A^{-1}_{\alpha,s_\lambda}\xi|_{[d-1]} = s_\lambda^{-\alpha }\otnorm{\xi}$ and $|[A^{-1}_{\alpha,s_\lambda}\xi]_d| = s_\lambda^{-1}\abs{[\xi]_d}$.
Finally, it holds $\angles{|\xi|} \asymp 1 + |\xi|$ and $\abs{[\xi]_d} + \otnorm{\xi} \asymp |\xi|$. Collecting all of these estimates, one obtains
\begin{flushright}
\begin{align*}
\big\vert \big(\partial^{\rho} \hat{g}_{\lambda}\big)( \matricol{\lambda} \xi) \big\vert
&\lesssim \frac{\min\Big\{ 1, s_{\lambda}^{-1} + |[\matricol{\lambda}\xi]_d|
+ s_\lambda^{-(1-\alpha)}|\matricol{\lambda}\xi|_{[d-1]}\Big\}^{M}}{\angles{|\matricol{\lambda}\xi|}^{N_1}\angles{|\matricol{\lambda}\xi|_{[d-1]}}^{N_2}}  \\
 &\lesssim \wfuncrad{\lambda}{M}{N_1}{N_2}.
\end{align*}
\end{flushright}
\end{proof}

\noindent
The expression on the right hand side of \eqref{paraproprad} can further be estimated by the function
\begin{align}\label{eq:sepfunc}
	S_{\lambda, M,N_1,N_2}(\xi) := \Sfunc{\lambda}{M}{N_1}{N_2}, \quad \xi\in\R^d.
\end{align}
As already discussed in \cite{GKKS2014}, this function can be separated into angular and radial components, allowing us to treat these parts independently in the integration later.
Since the notation in this article differs slightly from the one in \cite{GKKS2014}, we choose to state the lemma once more here, but refer to said article for a proof.

\begin{lemm} \label{zerlegungslemma} (\cite[Lemma 6.4]{GKKS2014})
Assume that $s_\lambda \geq c >0$ for all $\lambda\in\Lambda$. For every $M,N_1,N_2,K\in\N_0$ such that $K \leq N_2$ we have
with respect to $\lambda\in\Lambda$ and $\xi\in\R^d$ the uniform estimate
\begin{align*} %\label{zerlegung}
	\wfuncrad{\lambda}{M}{N_1}{N_2} \lesssim S_{\lambda, M-K, N_1 ,K}(\xi).
\end{align*}
\end{lemm}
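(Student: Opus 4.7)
The plan is to reduce the claimed inequality to an elementary pointwise comparison via a cancellation step followed by a short case distinction. First I would introduce the shorthand $\omega := \otnorm{R_{\theta_\lambda} R_{\varphi_\lambda}(\xi/|\xi|)} \in [0,1]$, which yields the identity $\otnorm{R_{\theta_\lambda} R_{\varphi_\lambda} \xi} = |\xi|\,\omega$, and then form the quotient of the two sides. After cancelling the common radial factor $(1 + s_\lambda^{-1}|\xi|)^{N_1}$ and the common minimum factor $\min\{1, s_\lambda^{-1}(1+|\xi|)\}^{M-K}$, the ratio reads
\[
\frac{\wfuncrad{\lambda}{M}{N_1}{N_2}}{S_{\lambda, M-K, N_1, K}(\xi)} \;=\; \frac{\min\{1,\, s_\lambda^{-1}(1+|\xi|)\}^{K}\,(1 + s_\lambda^{1-\alpha}\omega)^{K}}{(1 + s_\lambda^{-\alpha}|\xi|\omega)^{N_2}}.
\]
Since $K \le N_2$ and the base $1 + s_\lambda^{-\alpha}|\xi|\omega \ge 1$, the denominator dominates its own $K$-th power, so the task reduces to proving the single-exponent inequality
\[
\min\{1,\, s_\lambda^{-1}(1+|\xi|)\}\,(1 + s_\lambda^{1-\alpha}\omega) \;\lesssim\; 1 + s_\lambda^{-\alpha}|\xi|\omega
\]
uniformly in $\lambda \in \Lambda$ and $\xi \in \R^d \setminus \{0\}$; the case $K = 0$ is trivial since the left-hand side then collapses to one and the quotient is bounded by one outright.

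Next I would split according to the two branches of the minimum. In the branch $s_\lambda^{-1}(1+|\xi|) \ge 1$ the left-hand side equals $1 + s_\lambda^{1-\alpha}\omega$. A subcase analysis on $|\xi|$ versus $s_\lambda/2$ then finishes the branch: when $|\xi| \ge s_\lambda/2$ one has $s_\lambda^{-\alpha}|\xi|\omega \gtrsim s_\lambda^{1-\alpha}\omega$ and the bound is immediate, while when $|\xi| < s_\lambda/2$ the constraint $1 + |\xi| \ge s_\lambda$ forces $s_\lambda \le 2$, so by the standing hypothesis $s_\lambda \ge c$ every power $s_\lambda^{\pm\alpha}$ is bounded and the left-hand side is $\lesssim 1$.

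In the second branch $s_\lambda^{-1}(1+|\xi|) < 1$, I would expand the left-hand side as
\[
s_\lambda^{-1}(1+|\xi|) \,+\, s_\lambda^{-\alpha}\omega \,+\, s_\lambda^{-\alpha}|\xi|\omega
\]
and estimate each summand separately. The first is at most $1$ by the branch hypothesis, the second is $\lesssim 1$ because $\omega \le 1$ combined with $s_\lambda \ge c$ and $\alpha \in [0,1]$ yields $s_\lambda^{-\alpha} \lesssim 1$, and the third is precisely the non-trivial contribution on the right. The only mild obstacle I anticipate is bookkeeping around the regime $s_\lambda \asymp 1$, where the lower bound $s_\lambda \ge c$ (rather than $s_\lambda \ge 1$) is indispensable for controlling negative powers of $s_\lambda$; beyond this, the proof is a transparent case distinction requiring no deeper input.
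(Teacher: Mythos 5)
Your proof is correct. The paper itself does not prove Lemma~\ref{zerlegungslemma} but merely cites \cite[Lemma~6.4]{GKKS2014} for it, so there is no in-text argument to compare against; your self-contained derivation is a natural one and checks out. After cancelling the shared factors and noting $\otnorm{R_{\theta_\lambda}R_{\varphi_\lambda}\xi}=|\xi|\,\omega$ with $\omega\in[0,1]$, the reduction to the single-exponent inequality via $K\le N_2$ and the base being $\ge 1$ is sound (it does not even require $M\ge K$, since the algebraic cancellation of the minimum factor works in either direction). The case distinction is complete: in subcase~1b the constraints $1+|\xi|\ge s_\lambda$ and $|\xi|<s_\lambda/2$ indeed force $s_\lambda<2$, making $s_\lambda^{1-\alpha}$ bounded without even invoking $s_\lambda\ge c$, while in Branch~2 the expansion of the product into three summands is exact and each is controlled, with $s_\lambda\ge c$ used precisely to tame $s_\lambda^{-\alpha}$. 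One could also note that raising the single-exponent inequality to the $K$th power introduces a constant depending only on $K$, which is harmless since $K$ is a fixed parameter of the lemma.
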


Next, we want to estimate the scalar product of two functions of the form \eqref{eq:sepfunc}.
Before the actual result, Lemma~\ref{intWholeEstimate}, we need some preparation.
This is the part of the proof of Theorem~\ref{thm:almorth} which differs the most from the situation in two dimensions.

\begin{lemm} \label{intLinSphereEstimate} Let $a\ge a'>0$, $d\in\N$, $d \geq 2$, and $N>1$. Then we have uniformly for $y \in \R$
\begin{align*}
	\int_{\R} \frac{\abs{x}^{d-2}dx}{(1+a \abs{x})^{N+d-2}(1+a'\abs{x-y})^{N+d-2}} \lesssim a^{-(d-1)} (1+ a'\abs{y})^{-N}.
\end{align*}
\end{lemm}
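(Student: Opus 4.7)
\medskip

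The plan is to split the line of integration into two regions adapted to the off-center point $y$, namely
\[
A = \{ x \in \R : |x-y| \le |y|/2\}, \qquad B = \{ x \in \R : |x-y| > |y|/2 \},
\]
and estimate the integral separately over each.

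On $B$ one has $|x-y| > |y|/2$, hence $1 + a'|x-y| \gtrsim 1 + a'|y|$. Since $d \ge 2$, the exponent $N+d-2$ is at least $N$, so
\[
(1+a'|x-y|)^{-(N+d-2)} \lesssim (1+a'|y|)^{-N}.
\]
Pulling this factor out reduces the task to
$\int_\R |x|^{d-2}(1+a|x|)^{-(N+d-2)}\,dx$, which by the substitution $u = ax$ equals $a^{-(d-1)}$ times a $y$-independent integral $\int_\R |u|^{d-2}(1+|u|)^{-(N+d-2)}\,du$; the latter converges exactly because $N > 1$. This gives the required bound on $B$.

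On $A$ we have $|y|/2 \le |x| \le 3|y|/2$, so $|x|^{d-2} \asymp |y|^{d-2}$ and $1+a|x| \asymp 1+a|y|$. Pulling these out and substituting $u = a'(x-y)$ yields
\[
\int_A \lesssim \frac{|y|^{d-2}}{(1+a|y|)^{N+d-2}}\cdot \frac{1}{a'}\int_{|u|\le a'|y|/2}(1+|u|)^{-(N+d-2)}\,du \lesssim \frac{|y|^{d-2}}{(1+a|y|)^{N+d-2}}\cdot \frac{\min(a'|y|,1)}{a'}.
\]
It remains to check that this expression is dominated by $a^{-(d-1)}(1+a'|y|)^{-N}$, which I will do by a case analysis based on whether $a'|y|$ and $a|y|$ exceed $1$. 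If $a'|y|\le 1$ the bracketed factor simplifies to $|y|$; the conclusion then follows by comparing $|y|^{d-1}$ to $(1+a|y|)^{N+d-2}$, splitting further according to whether $a|y|\le 1$ (where both sides are of order $1$ and $|y| \le 1/a$) or $a|y| > 1$ (where $(1+a|y|)^{N+d-2}\asymp (a|y|)^{N+d-2}$ and the inequality reduces to $a|y|\ge 1$). If $a'|y| > 1$, then also $a|y| > 1$ because $a \ge a'$; the estimate reduces to the ratio $(a'/a)^{N-1}$, which is $\le 1$ by the same hypothesis.

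The main obstacle is precisely the last case analysis on $A$: this is the only place where the assumption $a \ge a'$ is used, and it enters in a crucial way to absorb the factor $1/a'$ (which is larger than $1/a$) into the decaying factor $(1+a|y|)^{-(N+d-2)}$. All other steps are routine scaling and tail estimates, but the balance between the three parameters $a$, $a'$ and $|y|$ has to be handled carefully to keep the final bound sharp in $a$ alone.
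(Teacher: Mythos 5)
Your proof is correct, and it takes a genuinely different route from the paper. The paper dispenses with the $|x|^{d-2}$ factor at once by noting $|ax|^{d-2}\le(1+|ax|)^{d-2}$, reducing the task to estimating $\int_{\R}(1+a|x|)^{-N}(1+a'|x-y|)^{-(N+d-2)}\,dx$, and then invoking the standard convolution-type bound from Grafakos,
\[
\int_{\R} \frac{dx}{(1+a|x|)^{N}(1+a'|x-y|)^{N}}\lesssim \max\{a,a'\}^{-1}\bigl(1+\min\{a,a'\}|y|\bigr)^{-N},
\]
so that the delicate balance of parameters is handled entirely by that cited lemma. Your argument instead keeps the $|x|^{d-2}$ weight explicit, splits $\R$ into the near region $A=\{|x-y|\le|y|/2\}$ and far region $B=\{|x-y|>|y|/2\}$, and works out the parameter balance by hand in the case analysis on $A$. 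On $B$ you extract $(1+a'|y|)^{-N}$ and rescale; on $A$ you use $|x|\asymp|y|$ to freeze the $a$-factor and then absorb $1/a'$ into $(1+a|y|)^{-(N+d-2)}$, with the hypothesis $a\ge a'$ entering exactly where you flagged it, via $(a'/a)^{N-1}\le1$. The paper's approach is shorter because it outsources the hard step; yours is self-contained and makes visible why $a\ge a'$ and $N>1$ are needed. Both yield the same constants up to absolute multiplicative factors.
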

\begin{proof}
Utilizing the following result from Grafakos \cite{Grafakos2008}[Appendix K.1]
\[
\int_{\R} \frac{dx}{(1+a\abs{x})^{N}(1+a'\abs{x-y})^{N}}\lesssim \max\{a,a'\}^{-1} (1+ \min\{a,a'\}\abs{y})^{-N}
\]
we can estimate
\begin{align*}
\int_{\R} &\frac{\abs{x}^{d-2}dx}{(1+a \abs{x})^{N+d-2}(1+a'\abs{x-y})^{N+d-2}}
= a^{-(d-2)} \int_{\R} \frac{\abs{ax}^{d-2}dx}{(1+a \abs{x})^{N+d-2}(1+a'\abs{x-y})^{N+d-2}} \\
& \le a^{-(d-2)} \int_{\R} \frac{(1+\abs{ax})^{d-2}dx}{(1+ a\abs{x})^{N+d-2}(1+a'\abs{x-y})^{N+d-2}}
= a^{-(d-2)} \int_{\R} \frac{dx}{(1+ a\abs{x})^{N}(1+a'\abs{x-y})^{N+d-2}} \\
& \lesssim a^{-(d-2)} \max\{a,a'\}^{-1} (1+\min\{a,a'\}\abs{y})^{-N}
= a^{-(d-1)}  (1+ a'\abs{y})^{-N}.
\end{align*}
\end{proof}

We can immediately deduce the following corollary.

\begin{cor}\label{cor:intSphereEstimate}
Let $a\ge a'>0$, $d\in\dimension$, and $N>1$. Then we have uniformly for $\theta_0\in\R$
\begin{align*}
	\int_0^\pi \frac{|\sin^{d-2}(\theta)|\,d\theta}{(1+a\abs{\sin(\theta)})^{N+d-2}(1+a'\abs{\sin(\theta - \theta_0)})^{N+d-2}} \lesssim a^{-(d-1)}\big(1+a'\abspi{\theta_0}\big)^{-N}.
\end{align*}
\end{cor}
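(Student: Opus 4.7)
My plan is to reduce the claim to Lemma~\ref{intLinSphereEstimate} by exploiting the $\pi$-periodicity of $|\sin|$ to shift the interval of integration into a region where the relevant sines can be straightened into linear functions via Lemma~\ref{lem:sinest}.

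First, since $|\sin(\theta-\theta_0)|$ is $\pi$-periodic in $\theta_0$, I replace $\theta_0$ by $\{\theta_0\}$, and by the symmetry $\theta\mapsto-\theta$ (which preserves $|\sin|$ on the integrand and sends $\theta_0$ to $-\theta_0$, with $\abspi{-\theta_0}=\abspi{\theta_0}$) I further assume $\theta_0\in[0,\pi/2)$, so $\abspi{\theta_0}=\theta_0$. Using $\pi$-periodicity of the integrand in $\theta$, I shift the interval of integration from $[0,\pi]$ to $[\theta_0-\pi/2,\theta_0+\pi/2]$, and substitute $\phi=\theta-\theta_0$ to obtain
\begin{align*}
I=\int_{-\pi/2}^{\pi/2}\frac{|\sin(\phi+\theta_0)|^{d-2}\, d\phi}{(1+a|\sin(\phi+\theta_0)|)^{N+d-2}(1+a'|\sin\phi|)^{N+d-2}}.
\end{align*}
I then split $[-\pi/2,\pi/2]=A\cup B$ with $A=[-\pi/2,\pi/2-\theta_0]$ and $B=(\pi/2-\theta_0,\pi/2]$; on $A$ one has $\phi+\theta_0\in[-\pi/2,\pi/2]$, while on $B$ one has $\phi+\theta_0\in(\pi/2,\pi/2+\theta_0]$.

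On $A$, Lemma~\ref{lem:sinest} gives $|\sin(\phi+\theta_0)|\asymp|\phi+\theta_0|$ and $|\sin\phi|\asymp|\phi|$; substituting $\psi=\phi+\theta_0$, extending the non-negative integrand to all of $\R$, and applying Lemma~\ref{intLinSphereEstimate} with $y=\theta_0$ bounds the $A$-contribution by $a^{-(d-1)}(1+a'\theta_0)^{-N}$. On $B$ we instead have $|\sin(\phi+\theta_0)|=|\sin(\phi+\theta_0-\pi)|\asymp\pi-\phi-\theta_0$ and still $|\sin\phi|\asymp\phi$; substituting $\psi=\pi-\theta_0-\phi$ and again extending to $\R$, Lemma~\ref{intLinSphereEstimate} with $y=\pi-\theta_0$ bounds the $B$-contribution by $a^{-(d-1)}(1+a'(\pi-\theta_0))^{-N}$, and since $\pi-\theta_0\ge\theta_0$ on $[0,\pi/2]$, this is in turn $\le a^{-(d-1)}(1+a'\theta_0)^{-N}$. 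Summing the two contributions yields the claim, since $\abspi{\theta_0}=\theta_0$. The main technical obstacle is the book-keeping around the point where $\phi+\theta_0$ crosses $\pi/2$, at which $|\sin|$ ceases to be linearly comparable to its argument; splitting into $A$ and $B$ and the second substitution on $B$ disentangle this issue so that Lemma~\ref{intLinSphereEstimate} applies directly on each piece.
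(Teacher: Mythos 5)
Your proof is correct and follows essentially the same route as the paper: linearize the sines via Lemma~\ref{lem:sinest}, reduce to the line integral in Lemma~\ref{intLinSphereEstimate}, and handle the $\pi$-periodic wrap-around by a finite decomposition. The paper does the book-keeping by shifting to $[-\pi/2,\pi/2]$ and summing over the three translates $\vartheta\in\{-\pi,0,\pi\}$ of $\{\theta_0\}$, whereas you choose a window centred at $\theta_0$ and split into two pieces; both variants are equally valid.
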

\begin{proof}
Let us call the integral to be estimated $\mathcal{S}$. Since the integrand on the left hand side is $\pi$-periodic,  we may change the domain of integration to $[-\pi/2, \pi/2]$. Applying Lemma~\ref{lem:sinest}, we can further conclude
\begin{align*}
\mathcal{S}\asymp \int\limits_{-\pi/2}^{\pi/2} \frac{\abs{\theta}^{d-2}d \theta}{(1 + a\abs{\theta})^{N+d-2}(1+ a'\abspi{\theta-\theta_0})^{N+d-2}},
\end{align*}

Since $|\{\theta_0\}|\le\frac{\pi}{2}$ we can estimate
\begin{align*}
\mathcal{S}\lesssim \sum_{\vartheta\in\{-\pi,0,\pi\}}	 \int_{\R} \frac{\abs{\theta}^{d-2} d\theta}{(1+a\abs{\theta})^{N+d-2}(1+a'\abs{\theta - (\{\theta_0\} + \vartheta)})^{N+d-2}},
\end{align*}
We now use Lemma~\ref{intLinSphereEstimate} to estimate this by
\begin{align*}
\mathcal{S}\lesssim \sum_{\vartheta\in\{-\pi,0,\pi\}}	 a^{-(d-1)} (1+a'\abs{\{\theta_0\} + \vartheta})^{-N}\lesssim  a^{-(d-1)}(1+ a'\abspi{\theta_0})^{-N}.
\end{align*}
\end{proof}

This result is used to estimate the integral of the angular parts of \eqref{eq:sepfunc} over the sphere $\sph^{d-1}$.

\begin{lemm} \label{realIntSphereEstimate}
Let $a, a'>0$, {$d\in\N$, $d \geq 2$}, $\theta_{\lambda}, \theta_{\mu} \in [0,\pi]\times[-\frac{\pi}{2},\frac{\pi}{2}]^{d-3}$, $\varphi_{\lambda} , \varphi_{\mu} \in [0, 2\pi]$ and $N>1$.
Further, let $d\sigma$ denote the standard surface measure on the sphere $\sph^{d-1}$.
We then have the estimate
\begin{align*}
	\int_{\sph^{d-1}} \frac{d \sigma( \eta)}{(1+ a |\rotmatr{\mu}\eta|_{[d-1]})^{N+d-2}(1+a'\otnorm{\rotmatr{\lambda}\eta })^{N+d-2}} \\
	\lesssim \max\{a,a'\}^{-(d-1)} \big(1+\min\{a,a'\}|\{d_\sph(e_\lambda, e_\mu)\}| \big)^{-N},
\end{align*}
where  $e_\lambda=R^T_{\varphi_\lambda} R^T_{\theta_\lambda}e_d $ and $e_\mu=R^T_{\varphi_\mu} R^T_{\theta_\mu}e_d $.
\end{lemm}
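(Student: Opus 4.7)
The plan is to reduce this $d$-dimensional spherical integral to the one-dimensional Corollary~\ref{cor:intSphereEstimate}. Both sides of the claimed estimate are symmetric under simultaneously swapping $(a, e_\mu) \leftrightarrow (a', e_\lambda)$, so I may assume throughout that $a \geq a'$; the target then simplifies to $I \lesssim a^{-(d-1)}\bigl(1 + a'\abspi{d_\sph(e_\lambda, e_\mu)}\bigr)^{-N}$. Since $R_{\theta_\mu}R_{\varphi_\mu}$ is orthogonal and sends $e_\mu$ to $e_d$, one has $\abs{R_{\theta_\mu}R_{\varphi_\mu}\eta}_{[d-1]} = \sin(d_\sph(\eta, e_\mu))$, and analogously $\abs{R_{\theta_\lambda}R_{\varphi_\lambda}\eta}_{[d-1]} = \sin(d_\sph(\eta, e_\lambda))$; so the integrand depends on $\eta$ only through the two spherical distances to $e_\mu$ and $e_\lambda$.

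I would then apply an orthogonal change of variables $\tilde\eta := R\eta$, where $R \in \mathcal{O}(d,\R)$ is chosen so that $Re_\mu = e_d$. Setting $\tilde e_\lambda := Re_\lambda$ gives $d_\sph(\eta, e_\mu) = d_\sph(\tilde\eta, e_d)$ and $d_\sph(\eta, e_\lambda) = d_\sph(\tilde\eta, \tilde e_\lambda)$, while $\theta_0 := d_\sph(\tilde e_\lambda, e_d) = d_\sph(e_\lambda, e_\mu)$. Using polar coordinates around $e_d$, I write $\tilde\eta = (\sin\theta\,\omega, \cos\theta)^T$ and $\tilde e_\lambda = (\sin\theta_0\,\omega_0, \cos\theta_0)^T$ with $\theta \in [0,\pi]$ and $\omega, \omega_0 \in \sph^{d-2}$; then $d\sigma(\tilde\eta) = \sin^{d-2}\theta\,d\theta\,d\sigma(\omega)$ and $\sin(d_\sph(\tilde\eta, e_d)) = \sin\theta$.

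The central step, and the main technical hurdle, is a lower bound on $\sin(d_\sph(\tilde\eta, \tilde e_\lambda))$ that is \emph{uniform in $\omega$}. A direct computation yields $\langle\tilde\eta, \tilde e_\lambda\rangle = c\sin\theta\sin\theta_0 + \cos\theta\cos\theta_0$ where $c := \langle\omega, \omega_0\rangle \in [-1,1]$. Since this expression is affine in $c$, its square is a convex parabola and is maximized at $c = \pm 1$, where it equals $\cos^2(\theta \mp \theta_0)$. Therefore
\begin{align*}
\sin(d_\sph(\tilde\eta, \tilde e_\lambda)) \geq \min\bigl\{\abs{\sin(\theta - \theta_0)},\,\abs{\sin(\theta + \theta_0)}\bigr\},
\end{align*}
independently of $\omega$. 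Alternatively, this bound may be derived from Lemma~\ref{lem:rotest} applied to any rotation sending $\tilde e_\lambda$ to $e_d$.

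With this $\omega$-uniform bound in hand, the integration over $\omega$ contributes only the constant $\abs{\sph^{d-2}}$, and the elementary inequality $(1 + a'\min\{x,y\})^{-(N+d-2)} \leq (1+a'x)^{-(N+d-2)} + (1+a'y)^{-(N+d-2)}$ splits the minimum into two one-dimensional integrals in $\theta$. Each of these is of exactly the form treated in Corollary~\ref{cor:intSphereEstimate} (using $a \geq a'$), producing $a^{-(d-1)}(1 + a'\abspi{\pm\theta_0})^{-N}$. Since $\abspi{-\theta_0} = \abspi{\theta_0} = \abspi{d_\sph(e_\lambda, e_\mu)}$, adding the two contributions yields the claim. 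Once the $\omega$-uniform lower bound is established the remainder of the proof is essentially an invocation of the one-dimensional estimate.
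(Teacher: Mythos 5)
Your proof is correct and follows essentially the same route as the paper: an orthogonal change of variables sending $e_\mu$ to $e_d$, polar coordinates about $e_d$, the same $\omega$-uniform lower bound $\sin(d_\sph(\tilde\eta,\tilde e_\lambda)) \geq \min\{|\sin(\theta-\theta_0)|,|\sin(\theta+\theta_0)|\}$ (which the paper obtains by invoking Lemma~\ref{lem:rotest}, whose proof is exactly the parabola argument you carry out inline), and then splitting the minimum before applying Corollary~\ref{cor:intSphereEstimate}. The only cosmetic difference is that you compute the key bound directly rather than citing Lemma~\ref{lem:rotest}, which you yourself note is equivalent.
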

\begin{proof}
    Note the symmetry of the statement with respect to interchanging the entities $a, a'$ and $\lambda, \mu$. Without loss of generality we can therefore restrict to the case
    $a\ge a'>0$.

    Since the mapping $\rotmatr{\mu}$ is an isometry, the integral is equal to
	\begin{align*}%\label{eq:sphereintegral}
    \mathcal{S}:=\int_{\sph^{d-1}} \frac{d \sigma(\eta)}{(1+ a \otnorm{\eta})^{N+d-2}(1+a'\vert R_{\theta_\lambda}R_{\varphi_\lambda}R^T_{\varphi_\mu}R^T_{\theta_\mu} \eta \vert_{[d-1]})^{N+d-2}}.
	\end{align*}
    For the integration we parameterize the sphere $\mathbb{S}^{d-1}$ by standard spherical coordinates, i.e.\ coordinates $(\theta_1, \dots \theta_{d-2}, \varphi)\in [0,\pi]^{d-2}\times [0,2\pi)$ such that
    for $\eta\in\mathbb{S}^{d-1}$
	\begin{align*}
	\eta(\theta,\varphi) = \begin{pmatrix}
	\sin(\theta_1) \cdots \cdots\sin(\theta_{d-2})\cos(\varphi) \\
		\sin(\theta_1) \cdots \cdots\sin(\theta_{d-2})\sin(\varphi) \\
			\sin(\theta_1) \cdots \sin(\theta_{d-3})\cos(\theta_{d-2}) \\
			\vdots\\
			\cos(\theta_1)
	\end{pmatrix}.
	\end{align*}
    Observe that $\langle \eta, e_d\rangle=\cos(\theta_1)$ and thus $\theta_1=d_{\mathbb{S}}(\eta,e_d)$.
    Also note $|\eta|_{[d-1]}=|\sin(\theta_1)|$.
    Letting $\theta_0:=d_{\mathbb{S}}(e_\lambda,e_\mu )\in[0,\pi]$ denote the angle between $e_\lambda$ and $e_\mu$ we have
    $\theta_0 =d_{\sph}( e_d, R_{\theta_\lambda}R_{\varphi_\lambda}R^T_{\varphi_\mu}R^T_{\theta_\mu}  e_d)$.
    Since $R_{\theta_\lambda}R_{\varphi_\lambda}R^T_{\varphi_\mu}R^T_{\theta_\mu} \in\mathcal{O}(d,\R)$ we can apply Lemma~\ref{lem:rotest} to estimate
    $| R_{\theta_\lambda}R_{\varphi_\lambda}R^T_{\varphi_\mu}R^T_{\theta_\mu} \eta|_{[d-1]} $.
    We obtain
		\begin{align*}
			%\int_{\sph^{d-1}} &\frac{d \sigma(\eta)}{(1+ a \otnorm{\eta})^{N+d-2}(1+a'\vert T_{\lambda,\mu}\eta \vert_{[d-1]})^{N+d-2}}  \\
			\mathcal{S} &\leq  \int_0^{2\pi} \int_0^\pi\dots \int_0^\pi \frac{\sin^{d-2}(\theta_1)\sin^{d-3}(\theta_2)\dots\sin(\theta_{d-2}) d\theta_1 d\theta_2 \dots d\theta_{d-2} d\varphi}{(1+ a
          |\sin(\theta_1)|)^{N+d-2}(1+a'\min\{\abs{\sin(\theta_1 + \theta_0)}, \abs{\sin(\theta_1 - \theta_0)}\})^{N+d-2}} \\
          &\lesssim \int_0^\pi \frac{\sin^{d-2}(\theta_1)\, d\theta_1 }{(1+ a
          |\sin(\theta_1)|)^{N+d-2}(1+a'\min\{\abs{\sin(\theta_1 + \theta_0)}, \abs{\sin(\theta_1 - \theta_0)}\})^{N+d-2}} \\
          &\le \sum_{\epsilon\in\{-1,1\}} \int_0^\pi \frac{|\sin(\theta_1)|^{d-2}\, d\theta_1 }{(1+ a
          |\sin(\theta_1)|)^{N+d-2}(1+a'\abs{\sin(\theta_1 - \epsilon \theta_0)})^{N+d-2}}.
		\end{align*}
		Using Corollary~\ref{cor:intSphereEstimate} we finally arrive at $\mathcal{S} \lesssim \max\{a,a'\}^{-(d-1)} \big(1+\min\{a,a'\}\abspi{\theta_0}\big)^{-N}$.
\end{proof}

With this estimate for the angular components in our toolbox, we proceed to prove the main result concerning the correlation of
functions of the form \eqref{eq:sepfunc}.

\begin{lemm} \label{intWholeEstimate}
Let $\alpha\in[0,1]$, $d\in\N$, $d \geq 2$, and $M,N_1,N_2\in\N_0$. Further, let $(\Lambda,\Phi_\Lambda)$ and $(\Delta,\Phi_\Delta)$ be parametrizations
with $(s_\lambda,e_\lambda,x_\lambda)=\Phi_\Lambda(\lambda)$ and $(s_\mu,e_\mu,x_\mu)=\Phi_\Delta(\mu)$ for $\lambda\in\Lambda$, $\mu\in\Delta$,
such that $c\le s_\lambda$ and $c\le s_\mu$ for a fixed constant $c>0$.
Then for $A>0$ and $B>1$ satisfying
\begin{align*}
	N_1> \frac{d}{2}, \quad M +d > N_1 \geq  A + \frac{1+\alpha(d-1)}{2}, \quad\text{and}\quad  N_2 \geq B+d-2
\end{align*}
the following estimate holds true with an implicit constant independent of $\lambda\in\Lambda$ and $\mu\in\Delta$,
\begin{align*}
(s_\lambda s_\mu)^{-\frac{1+\alpha(d-1)}{2}}\int\limits_{\R^d} S_{\lambda,M,N_1,N_2}(x) S_{\mu,M,N_1,N_2}(x) \,dx
\lesssim \max\left\{ \frac{s_\lambda}{s_\mu} , \frac{s_\mu}{s_\lambda} \right\}^{- A}\left( 1 + \min\{s_\lambda,s_\mu\}^{1-\alpha}|\{d_\sph(e_\lambda, e_\mu)\}| \right)^{-B}.
\end{align*}
\end{lemm}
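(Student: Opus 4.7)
The plan is to exploit the separability of $\Sfunc{\lambda}{M}{N_1}{N_2}$ into a purely radial and a purely angular factor:
$$
S_{\lambda,M,N_1,N_2}(r\eta)=\underbrace{\frac{\min\{1,s_\lambda^{-1}(1+r)\}^{M}}{(1+s_\lambda^{-1}r)^{N_1}}}_{=:S_\lambda^{\mathrm{rad}}(r)}\cdot\underbrace{\frac{1}{(1+s_\lambda^{1-\alpha}\otnorm{R_{\theta_\lambda}R_{\varphi_\lambda}\eta})^{N_2}}}_{=:S_\lambda^{\mathrm{ang}}(\eta)}.
$$
Passing to polar coordinates $\xi = r\eta$ with $r\ge 0$ and $\eta\in\mathbb{S}^{d-1}$, Tonelli's theorem then factorizes the integral as
$$
\int_{\R^d}S_\lambda S_\mu\,dx = I_{\mathrm{rad}}\cdot I_{\mathrm{ang}},\qquad I_{\mathrm{rad}}:=\int_0^\infty r^{d-1}S^{\mathrm{rad}}_\lambda(r)S^{\mathrm{rad}}_\mu(r)\,dr,\quad I_{\mathrm{ang}}:=\int_{\mathbb{S}^{d-1}}S^{\mathrm{ang}}_\lambda(\eta)S^{\mathrm{ang}}_\mu(\eta)\,d\sigma(\eta).
$$

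The angular integral is handled by a direct appeal to Lemma~\ref{realIntSphereEstimate} with $a=\max\{s_\lambda,s_\mu\}^{1-\alpha}$, $a'=\min\{s_\lambda,s_\mu\}^{1-\alpha}$ and $N=B$; under the hypothesis $N_2\ge B+d-2$ this yields
$$
I_{\mathrm{ang}}\lesssim \max\{s_\lambda,s_\mu\}^{-(1-\alpha)(d-1)}\bigl(1+\min\{s_\lambda,s_\mu\}^{1-\alpha}\abspi{d_\sph(e_\lambda,e_\mu)}\bigr)^{-B}.
$$

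The main work is the radial integral. Abbreviating $\beta:=\tfrac{1+\alpha(d-1)}{2}$ and writing $s_0=\min\{s_\lambda,s_\mu\}$, $s_1=\max\{s_\lambda,s_\mu\}$, I would prove
$$
I_{\mathrm{rad}}\lesssim s_0^{\beta+A}\,s_1^{\beta+(1-\alpha)(d-1)-A},
$$
since combining this bound with $I_{\mathrm{ang}}$ and multiplying by $(s_\lambda s_\mu)^{-\beta}=(s_0s_1)^{-\beta}$ telescopes precisely to $(s_0/s_1)^A(1+s_0^{1-\alpha}\abspi{d_\sph(e_\lambda,e_\mu)})^{-B}$, i.e.\ the claim. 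To establish the bound on $I_{\mathrm{rad}}$, I split $[0,\infty)$ into the three natural regimes dictated by the two $\min$-cutoffs (adapting the endpoints harmlessly in case $s_0<1$, using $s_\lambda,s_\mu\ge c$): on $[0,s_0]$ both minima equal $s_\cdot^{-1}(1+r)$ and both denominators are $\asymp 1$; on $[s_0,s_1]$ the $\lambda$-denominator becomes $(s_0^{-1}r)^{N_1}$ and the $\mu$-numerator is $(s_1^{-1}r)^{M}$; on $[s_1,\infty)$ both minima saturate to $1$ and both denominators behave like $(s_\cdot^{-1}r)^{N_1}$. A direct power-count on each piece gives contributions $s_0^{d+M}s_1^{-M}$, $s_0^{N_1}s_1^{d-N_1}$, and $s_0^{N_1}s_1^{d-N_1}$ respectively; here the condition $N_1>d/2$ secures integrability at infinity, and $M+d>N_1$ ensures that the middle integral is controlled by its upper endpoint $s_1$.

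Both surviving contributions have the form $(s_0/s_1)^{\kappa}s_1^{d}$ with $\kappa\in\{d+M,N_1\}$; since $s_0/s_1\le 1$, the hypotheses $N_1\ge A+\beta$ and $M+d\ge N_1\ge A+\beta$ let us replace $\kappa$ by $A+\beta$, yielding the target $I_{\mathrm{rad}}\lesssim s_0^{\beta+A}s_1^{d-\beta-A}$. The identity $d-\beta-A=\beta+(1-\alpha)(d-1)-A$ (which is just $2\beta+(1-\alpha)(d-1)=d$ rearranged) closes the argument. The main obstacle is the bookkeeping in the three-region split and the verification that the exponents produced by raw power-counting match the target exponents; everything else is either the already-proved angular lemma or routine.
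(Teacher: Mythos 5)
Your proposal is correct and follows essentially the same route as the paper's own proof: Tonelli/polar separation, a direct application of Lemma~\ref{realIntSphereEstimate} (with $a=\max\{s_\lambda,s_\mu\}^{1-\alpha}$, $a'=\min\{s_\lambda,s_\mu\}^{1-\alpha}$, $N=B$) for the angular factor, and a three-region split with power-counting for the radial factor. The only substantive difference is that you split the radial integral at $s_0$ and $s_1$, whereas the paper splits at $1$ and $s_\mu$; the latter choice sidesteps the small-$s_0$ case you flag, but as you note this is a harmless bookkeeping adjustment given the lower bound $s_\lambda,s_\mu\ge c$, and the resulting exponents and the verification $2\beta+(1-\alpha)(d-1)=d$ match the paper's computation exactly.
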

\begin{proof}
Without loss of generality we subsequently assume $s_\lambda \leq s_{\mu}$. The strategy is to separate the integration into an angular and a radial part and estimate these
independently. For the estimate of the angular part we can use Lemma~\ref{realIntSphereEstimate}, which yields
\begin{align*}	
(s_\lambda s_\mu)^{-\frac{(1+\alpha(d-1)}{2}} \int_{0}^\infty \int_{\sph^{d-1}}  S_{\lambda, M, N_1, N_2}(\eta,r) S_{\mu, M, N_1, N_2}(\eta,r) r^{d-1} \,d\sigma(\eta) dr \\
	\lesssim    (s_\lambda s_\mu)^{-\frac{1+\alpha(d-1)}{2}}    s_\mu^{-(1-\alpha)(d-1)} s_\mu^d  \big(1+ s_\lambda^{1-\alpha} |\{d_\sph(e_\lambda, e_\mu)\}|\big)^{-B}
	\cdot\, \mathcal{S}
\end{align*}
with a remaining radial integral
\[
\mathcal{S}:= s_\mu^{-d} \int_{0}^\infty \SfuncRad{\lambda}{M}{N_1}\SfuncRad{\mu}{M}{N_1} r^{d-1} \,dr.
\]
Note that for the estimate we used the assumptions $s_\lambda \leq s_\mu$, $B>1$ and $N_2 \ge B+d-2$.
	It remains to verify the relation
	$(s_\mu s_\lambda)^{-(1+\alpha(d-1))/2} s_\mu^{-(1-\alpha)(d-1)} s_\mu^d \cdot \mathcal{S}  \lesssim ( s_\mu/s_\lambda)^{-A}$,
    or equivalently
    \begin{align*}
	  \mathcal{S} \lesssim \Big(\frac{s_\mu}{s_\lambda}\Big)^{-A-\frac{1+\alpha(d-1)}{2}}.
	\end{align*}
	To prove this, we split the integration of $\mathcal{S}$ into three parts $\mathcal{S}_1,\mathcal{S}_2,\mathcal{S}_3$
    corresponding to the integration ranges $0\leq r \leq 1$, $1 \leq r \leq s_\mu$, and $s_\mu \leq r$ respectively.
	
	\paragraph*{\underline{$0 \leq r \leq 1$}:}
    Here we estimate
    $\min\left\{1, s_{\lambda}^{-1}(1+r)\right\}^{M} \leq s_{\lambda}^{- M}(1+r)^{M} \leq 2^M s_\lambda^{- M}$
	 and $(1 +s_{\lambda}^{-1}r)^{N_1} \geq 1$, and similarly for the index $\mu$. Hence, the integral over this part can be estimated by
    \begin{align*}
		\mathcal{S}_1 \lesssim s_\mu^{-d} s_\lambda^{- M} s_\mu^{- M} \int_0^1 r^{d-1} \,dr \asymp  s_\mu^{- (M+d)} s_\lambda^{- M} \lesssim \left( \frac{s_\mu}{s_\lambda}\right)^{-(M+d)},
	\end{align*}
    where the last inequality holds because of the uniform lower bound $0<c\le s_\lambda$ for $\lambda\in\Lambda$.
	Finally observe that the assumed inequalities imply $M + d >A + \frac{1+\alpha(d-1)}{2}$.
	
	\paragraph*{\underline{$1 \leq r \leq s_\mu$}}
    We estimate the terms involving $\mu$ as follows: $(1 +s_{\mu}^{-1}r)^{N_1}\ge 1$ and $(r+1)\leq 2r$. Hence
	\begin{align*}
	\min\left\{1, s_\mu^{-1}(1+r)\right\}^{M} \leq s_{\mu}^{- M}(1+r)^{M}  \leq s_{\mu}^{- M}(r+r)^{M}  \leq 2^M s_{\mu}^{- M}r^{M}.
	\end{align*}
	For the terms with $\lambda$'s, we have $( 1+ s_{\lambda}^{-1}r)^{N_1} \geq s_\lambda^{- N_1} r^{N_1}$ and $\min\left\{1, s_{\lambda}^{-1}(1+r)\right\}^{M} \leq 1$. The integral $\mathcal{S}_2$
    hence satisfies
    \begin{align*}
    \mathcal{S}_2\lesssim s_\mu^{-d} s_\lambda^{  N_1} s_\mu^{- M} \int_1^{s_\mu} r^{M - N_1 +d-1} \,dr \lesssim  s_\mu^{-(M+d)}  s_\lambda^{  N_1} s_\mu^{M+d-N_1} = \Big( \frac{s_\mu}{s_\lambda} \Big)^{-N_1},
    \end{align*}
    where it was used that $M+d>N_1$, which implies $M+d-N_1-1>-1$, for the integration.
    By assumption $N_1 \geq  A + \frac{1+\alpha(d-1)}{2}$, giving the desired result.

\paragraph*{\underline{$s_\mu \leq r $}}
We estimate both terms like the $\lambda$-terms above to obtain
\begin{align*}
    \mathcal{S}_3\lesssim s_\mu^{-d} s_\lambda^{N_1} s_\mu^{N_1} \int_{s_\mu}^\infty r^{ d-1- 2N_1 } \, dr \lesssim  s_\mu^{N_1-d} s_\lambda^{N_1} s_\mu^{d-2N_1} \lesssim \Big( \frac{s_\mu}{s_\lambda} \Big)^{-N_1}.
    \end{align*}

The integral converges since $N_1>\frac{d}{2}$.
Since $N_1 \geq  A + \frac{1+\alpha(d-1)}{2}$ the proof is finished.
\end{proof}

\subsection{Cancellation Estimate}

Theorem~\ref{thm:almorth} provides estimates for the scalar products of $\alpha$-molecules.  To derive them
we evaluate these scalar products on the Fourier side, where
we can take advantage of cancellation phenomena.
Technically, the method is based on a clever integration by parts involving the following differential operator, depending on $\lambda\in\Lambda$, $\mu\in\Delta$,
\begin{align}\label{eq:diffop}
	\mathscr{L}_{\lambda,\mu} := \mathcal{I} - s_0^{2\alpha} \Delta -  \frac{s_0^{2}}{1 + s_0^{2(1-\alpha)}\abspi{d_{\sph}(e_\lambda, e_\mu)}^2}\sprod{e_\lambda, \nabla}^2,
\end{align}
where $s_0=\min\{s_\lambda,s_\mu \}$, $\mathcal{I}$ is the identity operator, $\nabla$ the gradient and $\Delta$ the standard Laplacian.

Lemma~\ref{Lwirkung} shows how $\mathscr{L}_{\lambda,\mu}$ acts on products of functions $\afunc$, $\bfunc$ which satisfy \eqref{eq:molcon}.

\begin{lemm} \label{Lwirkung}
Let $a_{\lambda}$ and $b_{\mu}$ satisfy \eqref{eq:molcon} for every multi-index $\rho\in\N_0^d$ with $|\rho|_1 \leq L$ and assume $s_\lambda,s_\mu\ge c>0$.
Then we can write the expression
\begin{align*}
	\mathscr{L}_{\lambda,\mu}\left(  a_{\lambda}(\matricol{\lambda} \xi)b_{\mu}(\matricol{\mu}\xi) \right)
\end{align*}
as a finite linear combination of terms of the form
\begin{align*}
	p_{\lambda}(\matricol{\lambda}\xi)q_{\mu}(\matricol{\mu}\xi)
\end{align*}
with functions $p_{\lambda},q_{\mu}$, which satisfy \eqref{eq:molcon} for all multi indices $\rho\in\N_0^d$ with $|\rho|_1\leq L-2$.
\end{lemm}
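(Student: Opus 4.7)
The strategy is to expand $\mathscr{L}_{\lambda,\mu}\bigl(a_\lambda(M_\lambda \xi)\,b_\mu(M_\mu \xi)\bigr)$ using Leibniz and the chain rule, where I abbreviate $M_\lambda := A^{-1}_{\alpha,s_\lambda}R_{\theta_\lambda}R_{\varphi_\lambda}$ and $M_\mu$ analogously. The identity part of $\mathscr{L}_{\lambda,\mu}$ contributes the bare product. For the two differential parts I use $\Delta(fg) = (\Delta f)g + 2\nabla f\cdot \nabla g + f(\Delta g)$ and the analogous identity for $\sprod{e_\lambda,\nabla}^2$, together with the chain-rule identities below.

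The key identities I will exploit are
\[
M_\lambda M_\lambda^T = A^{-2}_{\alpha,s_\lambda}, \qquad M_\lambda e_\lambda = s_\lambda^{-1}e_d,
\]
the latter following from $e_\lambda = R^T_{\varphi_\lambda}R^T_{\theta_\lambda}e_d$. They immediately yield $\Delta_\xi(a_\lambda(M_\lambda \xi)) = s_\lambda^{-2\alpha}\sum_{k<d}(\partial_k^2 a_\lambda)(M_\lambda \xi) + s_\lambda^{-2}(\partial_d^2 a_\lambda)(M_\lambda \xi)$ and $\sprod{e_\lambda,\nabla_\xi}(a_\lambda(M_\lambda\xi)) = s_\lambda^{-1}(\partial_d a_\lambda)(M_\lambda \xi)$. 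For the $b_\mu$-factor, the vector $v := M_\mu e_\lambda$ satisfies $(v)_d = s_\mu^{-1}\cos(d_\sph(e_\lambda,e_\mu))$ and $\sum_{k<d}(v)_k^2 = s_\mu^{-2\alpha}\sin^2(d_\sph(e_\lambda,e_\mu))$, because $R_{\theta_\mu}R_{\varphi_\mu}e_\lambda$ lies on $\sph^{d-1}$ at polar angle $d_\sph(e_\lambda,e_\mu)$ from $e_d$. The entries of $M_\lambda M_\mu^T$ arising in the cross term $\nabla f \cdot \nabla g$ are controlled analogously through the column $R_{\theta_\lambda}R_{\varphi_\lambda}e_\mu$.

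Substituting yields a finite sum of terms of the shape $c_{\lambda,\mu,\sigma,\tau}\,(\partial^{\sigma}a_\lambda)(M_\lambda \xi)\,(\partial^{\tau}b_\mu)(M_\mu \xi)$ with $|\sigma|_1 + |\tau|_1 \le 2$. The main obstacle is to verify that each scalar $c_{\lambda,\mu,\sigma,\tau}$ is uniformly bounded in $\lambda,\mu$. Using Lemma~\ref{lem:sinest} to replace $|\sin(d_\sph(e_\lambda,e_\mu))|$ by $\abspi{d_\sph(e_\lambda,e_\mu)}$ up to constants, each coefficient is the product of a prefactor from $\mathscr{L}_{\lambda,\mu}$, powers of $s_\lambda^{-1},s_\mu^{-1},s_\mu^{-\alpha}$, and possibly factors $\abspi{d_\sph(e_\lambda,e_\mu)}^{j}$ with $j\in\{0,1,2\}$. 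Distinguishing the two cases $s_0^{1-\alpha}\abspi{d_\sph(e_\lambda,e_\mu)} \le 1$ and the complementary one, each coefficient reduces to a product of nonnegative powers of the ratios $s_0/s_\lambda \le 1$ and $s_0/s_\mu \le 1$, together with factors $s_\lambda^{-\kappa}$ or $s_\mu^{-\kappa}$, $\kappa \ge 0$, bounded by $c^{-\kappa}$ by the standing assumption $s_\lambda, s_\mu \ge c$. The case split precisely exploits the denominator $1+s_0^{2(1-\alpha)}\abspi{d_\sph(e_\lambda,e_\mu)}^2$ in the directional prefactor, which was designed to absorb the growth of the angle factors.

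After this verification, the resulting summands are already of the product form $p_\lambda(M_\lambda\xi)\,q_\mu(M_\mu\xi)$, with $q_\mu$ a partial derivative $\partial^{\tau}b_\mu$ and $p_\lambda$ a uniformly bounded scalar multiple of some $\partial^{\sigma}a_\lambda$. Since $a_\lambda, b_\mu$ satisfy \eqref{eq:molcon} for all multi-indices of order at most $L$, any further differentiation of order at most $2$ yields functions obeying the same bound for multi-indices of order at most $L-2$; combined with the uniform boundedness of the absorbed scalars, this yields the claimed representation.
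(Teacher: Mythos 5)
Your proof is correct and follows essentially the same route as the paper: expand $\mathscr{L}_{\lambda,\mu}$ acting on the product via the Leibniz and chain rules, identify the scalar coefficients that arise, and show they are uniformly bounded using the structure of $\mathscr{L}_{\lambda,\mu}$, the angle factors $|\sin(d_\sph(e_\lambda,e_\mu))|$, and the lower bound $s_\lambda,s_\mu\ge c$. Two bookkeeping choices differ slightly from the paper's version but amount to the same thing: you use the clean identity $M_\lambda M_\lambda^T = A^{-2}_{\alpha,s_\lambda}$ to get a closed-form expression for $\Delta(a_\lambda(M_\lambda\xi))$, whereas the paper bounds the same object via the operator norm of the Hessian and $\|O_\lambda\|^2_{2\to2}\lesssim s_\lambda^{-2\alpha}$; and you absorb the directional prefactor by a case split on whether $s_0^{1-\alpha}\abspi{d_\sph(e_\lambda,e_\mu)}\le 1$, whereas the paper records the equivalent three-way bound $\frac{s_0^2}{1+s_0^{2(1-\alpha)}\abspi{d_\sph(e_\lambda,e_\mu)}^2}\le\min\{s_0^2,\ s_0^{2\alpha}\abspi{d_\sph(e_\lambda,e_\mu)}^{-2},\ s_0^{1+\alpha}\abspi{d_\sph(e_\lambda,e_\mu)}^{-1}\}$ and applies whichever branch matches the power of the angle produced by the chain rule.
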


\begin{proof}
For convenience we introduce the operators $O_\lambda:=\matricol{\lambda}$ and $O_\mu:=\matricol{\mu}$.
Further, we define the functions $\widetilde{a}_{\lambda}(\xi):=a_{\lambda}(O_\lambda \xi)$ and $\widetilde{b}_{\mu}(\xi):=b_{\mu}(O_\mu \xi)$.
We also abbreviate $\xi_\lambda:=O_\lambda \xi$ and $\xi_\mu:=O_\mu \xi$.
Taking into account $s_\lambda\gtrsim 1$, we observe
$
\|O_\lambda \|_{2\rightarrow 2}=\| A^{-1}_{\alpha,s_\lambda} \|_{2\rightarrow 2} = \max\{s_\lambda^{-\alpha}, s_\lambda^{-1}\} \lesssim s_\lambda^{-\alpha}.
$
Analogously, it holds $\|O_\mu \|_{2\rightarrow 2}=\| A^{-1}_{\alpha,s_\mu} \|_{2\rightarrow 2} \lesssim s_\mu^{-\alpha}$. 	Finally, we introduce the `transfer' matrix
\begin{align}\label{eq:transfermat}
T_{\lambda,\mu}:= R_{\theta_\mu}R_{\varphi_\mu}R^T_{\varphi_\lambda}R^T_{\theta_\lambda} \in \mathcal{O}(d,\R).
\end{align}

After these remarks we turn to the proof, where we treat the components of $\mathscr{L}_{\lambda,\mu}$ separately.

\paragraph{\bf\underline{$\mathcal{I}$}} This term causes no pain.

\paragraph{\bf\underline{$s_0^{2\alpha} \Delta$}}

By the product rule we have
\begin{align*}
	\Delta (\widetilde{a}_{\lambda} \widetilde{b}_{\mu})
= \underbrace{ 2 \sprod{ \nabla \widetilde{a}_{\lambda}, \nabla  \widetilde{b}_{\mu}}}_{\bf A}
+ \underbrace{\widetilde{a}_{\lambda} \Delta  \widetilde{b}_{\mu}+  \widetilde{b}_{\mu} \Delta \widetilde{a}_{\lambda}}_{\bf B}  .
\end{align*}
In the following we first treat part {\bf A} and then part {\bf B}.

\paragraph*{\bf A}

The chain rule yields $\nabla \widetilde{a}_\lambda (\xi) = O^T_{\lambda}\nabla a_{\lambda}(\xi_\lambda)$ for every $\xi\in\R^d$ and an analogous formula for $\widetilde{b}_{\mu}$.
Thus we obtain
\[
\big\langle \nabla\widetilde{a}_{\lambda}(\xi) , \nabla\widetilde{b}_{\mu}(\xi) \big\rangle =	\big\langle O^T_{\lambda}\nabla a_{\lambda}(\xi_\lambda),  O^T_{\mu}\nabla b_{\mu}(\xi_\mu) \big\rangle
= \big\langle O_{\mu}O^T_{\lambda}\nabla a_{\lambda}(\xi_\lambda),  \nabla b_{\mu}(\xi_\mu) \big\rangle.
\]
The expression $\big\langle O_{\mu}O^T_{\lambda}\nabla a_{\lambda},  \nabla b_{\mu} \big\rangle$ is a linear combination of the products $\partial_i a_{\lambda} \partial_j b_{\mu}$,
where $i,j\in\{1,\ldots,d\}$, with the entries of the matrix $O_\mu O^T_\lambda$ as coefficients.
The functions $\partial_i a_{\lambda}$ and $\partial_j b_{\mu}$ clearly satisfy \eqref{eq:molcon} for every $\rho\in\N_0^d$ with $|\rho|_1 \leq L-1$.
Moreover, the entries of the matrix $O_\mu O^T_\lambda$ are bounded in modulus by $\|O_\mu O^T_\lambda \|_{2\rightarrow 2}$, which in turn obeys the estimate
\[
  \|O_\mu O^T_\lambda \|_{2\rightarrow 2}
	=\|A^{-1}_{\alpha,s_\mu}T_{\lambda,\mu} A^{-1}_{\alpha,s_\lambda} \|_{2\rightarrow 2}
    \leq \|A^{-1}_{\alpha,s_\mu}\|_{2\rightarrow 2} \|A^{-1}_{\alpha,s_\lambda}\|_{2\rightarrow 2} \lesssim (s_\mu s_\lambda)^{-\alpha} \leq s_0^{-2\alpha},
    \]
where $s_0=\min\{s_\lambda,s_\mu \}$. This shows that the function $s_0^{2\alpha}{\bf A}$ can be written as claimed.

  \paragraph*{\bf B}

  Due to symmetry it suffices to treat the term $\widetilde{b}_{\mu} \Delta \widetilde{a}_{\lambda}$.
  Since $\widetilde{b}_{\mu}(\xi)=b_{\mu}(\xi_\mu)$ for $\xi\in\R^d$ and since $b_{\mu}$ fulfills condition \eqref{eq:molcon} for every $\rho\in\N_0^d$ with
  $|\rho|_1\le L$, the function $b_{\mu}$ is a suitable first factor with the required properties. Let us investigate the second factor $\Delta \widetilde{a}_{\lambda}$.

The second derivative of $\widetilde{a}_\lambda$ is at each $\xi\in\R^d$ a bilinear mapping $\R^d \times \R^d \to \R$, which by the chain rule satisfies for $v,w\in\R^d$
\begin{align*}
	\widetilde{a}^{\:\prime\prime}_{\lambda}(\xi)[v,w] =
 a^{\prime\prime}_{\lambda}(\xi_\lambda) [O_{\lambda}v,O_{\lambda} w].
\end{align*}

\noindent
Thus, we have the expansion
\begin{align*}
	\Delta \widetilde{a}_{\lambda}(\xi) = \sum_{i=1}^d 	\widetilde{a}^{\prime\prime}_{\lambda}(\xi) [e_i,e_i] = \sum_{i=1}^d 	a^{\prime\prime}_{\lambda}(\xi_\lambda) [O_\lambda e_i,O_\lambda e_i].
\end{align*}

Let $\rho\in\N_0^d$ be a multi-index with $|\rho|_1 \leq L-2$. Then the partial derivative with respect to $\rho$ of the function
$\xi\mapsto \sum_{i=1}^d s_0^{2\alpha} a^{\prime\prime}_{\lambda}(\xi) [O_\lambda e_i,O_\lambda e_i]$ clearly exists.
It remains to prove the frequency localization \eqref{eq:molcon}.

In view of $\partial^\rho (a_\lambda^{\prime\prime}) = (\partial^\rho a_\lambda)^{\prime\prime}$ we can estimate for every $i\in\{1,\ldots,d\}$ and every $\xi\in\R^d$
\begin{align*}
s^{2\alpha}_0	| \partial^\rho a^{\prime\prime}_{\lambda}(\xi)[O_\lambda e_i, O_\lambda e_i] | \leq s^{2\alpha}_0 \bimapnorm{  \partial^\rho a_{\lambda}''(\xi)} \| O_{\lambda}\|_{2\rightarrow 2}^2
\lesssim  \bimapnorm{  \partial^\rho a_{\lambda}''(\xi) }.
\end{align*}
The norm of the bilinear mapping is given by $\bimapnorm{  \partial^\rho a_{\lambda}''(\xi)}= \sup_{|v|,|w|=1} | \partial^\rho a''_\lambda(\xi)[v,w] |$.
This is equal to the spectral norm of the corresponding Hesse matrix.
Therefore we can deduce
$\bimapnorm{  \partial^\rho a_{\lambda}^{\prime\prime}(\xi) }\lesssim \sup_{|\beta|_1=2} \abs{\partial^{\beta} \partial^\rho a_{\lambda}(\xi)}$.
The functions $\partial^{\beta}\partial^\rho a_{\lambda}$ satisfy \eqref{eq:molcon} for every $\beta\in\N_0^d$ with $|\beta|_1 = 2$ due to the assumption on $a_{\lambda}$.
The required frequency localization follows.

\paragraph*{\bf\underline{$s_0^{2}(1+ s_0^{2(1-\alpha)}\abspi{d_\sph(e_\lambda, e_\mu)}^2)^{-1}\sprod{e_\lambda, \nabla}^2$}}

First we put $w_1:=s_0^2$, $w_2:=s_0^{2\alpha} \abspi{d_\sph(e_\lambda, e_\mu)}^{-2} $, and $w_3:=s_0^{1+\alpha}  \abspi{d_\sph(e_\lambda, e_\mu)}^{-1}$
and notice that the pre-factor satisfies
\begin{align}\label{eq:prefactor}
s_0^{2}(1+ s_0^{2(1-\alpha)}\abspi{d_\sph(e_\lambda, e_\mu)}^2)^{-1} \le \min\{ w_1, w_2, w_3  \}.
\end{align}
The first two estimates are obvious. For the third, recall that $1+t^2\geq 2t$ for all $t\in\R$. Hence,
\begin{align*}
	s_0^{2} (1 + s_0^{2(1-\alpha)} \abspi{d_\sph(e_\lambda, e_\mu)}^2 )^{-1}
\leq  \textstyle{\frac{1}{2}} s_0^{2} (s_0^{1-\alpha}  \abspi{d_\sph(e_\lambda, e_\mu)})^{-1}  \le  s_0^{1+\alpha} \abspi{d_\sph(e_\lambda, e_\mu)}^{-1}.
\end{align*}

We begin with the product rule, which yields
\begin{align}\label{eq:prodrule}
\sprod{e_\lambda, \nabla}^2 \big( \widetilde{a}_{\lambda} \widetilde{b}_{\mu} \big) = \widetilde{b}_{\mu} \langle e_\lambda,\nabla \rangle^2 \widetilde{a}_{\lambda}
      + 2 (\langle e_\lambda,\nabla \rangle \widetilde{a}_{\lambda}) (\langle e_\lambda,\nabla \rangle \widetilde{b}_{\mu})
      + \widetilde{a}_{\lambda} \langle e_\lambda,\nabla \rangle^2  \widetilde{b}_{\mu}.
\end{align}
Recall that $e_\lambda=R^T_{\varphi_\lambda}R^T_{\theta_\lambda} e_d$.
We calculate with the chain rule for $\xi\in\R^d$
\begin{align*}
	\big\langle e_\lambda, \nabla\widetilde{a}_{\lambda} (\xi) \big\rangle =\big\langle O_{\lambda} e_\lambda, \nabla a_{\lambda} (\xi_\lambda) \big\rangle
	= \big\langle A^{-1}_{\alpha,s_\lambda}e_d, \nabla a_{\lambda} ( \xi_\lambda ) \big\rangle = s_\lambda^{-1}\partial_d a_{\lambda}(\xi_\lambda),
\end{align*}
where we used $O_\lambda e_\lambda= A^{-1}_{\alpha,s_\lambda}e_d$. Using the `transfer' matrix $T_{\lambda,\mu}$ from \eqref{eq:transfermat},
we similarly obtain
\begin{align*}
	\big\langle e_\lambda , \nabla \widetilde{b}_{\mu} (\xi)  \big\rangle  =\big\langle O_{\mu} e_\lambda, \nabla b_{\mu}  ( \xi_\mu ) \big\rangle
	= \big\langle A^{-1}_{\alpha,s_\mu} T_{\lambda,\mu} e_d, \nabla b_{\mu} ( \xi_\mu ) \big\rangle.
\end{align*}

Next, we note that $\langle e_\lambda,\nabla \rangle^2 \widetilde{a}_\lambda(\xi)= \widetilde{a}_\lambda^{\prime\prime}(\xi)[e_\lambda,e_\lambda]$. Together with the chain rule, this implies
\begin{align*}
\langle e_\lambda,\nabla \rangle^2 \widetilde{a}_\lambda (\xi) = a_\lambda^{\prime\prime} (\xi_\lambda)[O_\lambda e_\lambda, O_\lambda e_\lambda]=a^{\prime\prime}_\lambda (\xi_\lambda)[A^{-1}_{\alpha,s_\lambda} e_d, A^{-1}_{\alpha,s_\lambda} e_d ]= s^{-2}_\lambda \partial^2_d a_\lambda (\xi_\lambda).
\end{align*}
We also obtain
\begin{align*}
%\langle e_\lambda,\nabla\rangle^2 \widetilde{a}_{\lambda}(\xi)&= a^{\prime\prime}_\mu (\xi_\lambda)[A^{-1}_{\alpha,s_\lambda} e_d, A^{-1}_{\alpha,s_\lambda} e_d ]= s^{-2}_\lambda \partial^2_d a_\lambda (\xi_\lambda), \\
\langle e_\lambda,\nabla \rangle^2 \widetilde{b}_{\mu}(\xi) = b^{\prime\prime}_\mu (\xi_\mu) [ O_\mu e_\lambda,  O_\mu e_\lambda ]
=  \big(\sprod{A^{-1}_{\alpha,s_\mu} T_{\lambda,\mu} e_d, \nabla}^2  b_{\mu} \big)(\xi_\mu).
\end{align*}

Let us henceforth use the abbreviation $\eta:=T_{\lambda,\mu} e_d\in \mathbb{S}^{d-1}$.
Plugging the above calculations into \eqref{eq:prodrule} leads to the following expression for $\sprod{e_\lambda, \nabla}^2 \big( \widetilde{a}_{\lambda} \widetilde{b}_{\mu} \big)(\xi)$ at $\xi\in\R^d$
\begin{align}\label{eq:lincomb}
s_\lambda^{-2}b_{\mu}(\xi_\mu) \cdot \partial_d^2a_{\lambda}(\xi_\lambda)
+  2 s_\lambda^{-1} \partial_d a_{\lambda}(\xi_\lambda) \cdot \sprod{A^{-1}_{\alpha,s_\mu} \eta, \nabla  b_{\mu}(\xi_\mu) }
+ a_{\lambda}(\xi_\lambda) \cdot \big(\sprod{A^{-1}_{\alpha,s_\mu} \eta, \nabla}^2  b_{\mu} \big)(\xi_\mu).
\end{align}

For the first summand of \eqref{eq:lincomb} we consider the product of the functions $s^{-2}_\lambda\partial^2_d a_\lambda$ and $b_\mu$.
Since $s_\lambda^{-2}\le s_0^{-2}$ and in view of \eqref{eq:prefactor} the pre-factor $w_1$ is compensated. Due to the assumptions on $\afunc$ and $\bfunc$ the product is thus of the desired form.

Let us put $\eta_{[d-1]}:=(\eta_1,\ldots,\eta_{d-1},0)^T\in\R^d$ and
$\eta_{[d]}:=(0,\ldots,0,\eta_{d})^T\in\R^d$ and observe that
\[
A^{-1}_{\alpha,s_\mu} \eta = A^{-1}_{\alpha,s_\mu} ( \eta_{[d-1]} + \eta_{[d]} ) = s_\mu^{-\alpha }  \eta_{[d-1]} + s_\mu^{-1}\eta_{[d]}.
\]
The second summand of \eqref{eq:lincomb} then becomes -- up to the factor $2$ --
\[
\partial_d a_{\lambda}(\xi_\lambda) \cdot \big( s_\lambda^{-1}s_\mu^{-\alpha} \sprod{  \eta_{[d-1]} , \nabla  b_{\mu}(\xi_\mu) }
+ s_\lambda^{-1}s_\mu^{-1} \eta_d \partial_d b_{\mu}(\xi_\mu) \big).
\]
We choose the function $\partial_d a_\lambda$ as the first factor, which clearly has the required properties, and the function
\begin{align*}
\xi\mapsto	s_\lambda^{-1}s_\mu^{-\alpha} \langle \eta_{[d-1]}, \nabla b_\mu(\xi)  \rangle  +  s^{-1}_\lambda s_\mu^{-1} \eta_{d} \partial_d b_\mu(\xi) .
\end{align*}
as the second factor.
The second component of this function causes no problems because $|\eta_d|\le 1$ and the pre-factor $w_1$ is compensated due to $(s_\lambda s_\mu)^{-1}\le s^{-2}_0$.
To deal with the other term, notice that by Lemma~\ref{lem:raygeo}
$ |\eta_{[d-1]}|=|\eta|_{[d-1]} \asymp \abspi{d_\sph(e_\lambda, e_\mu)} $. Thus
\[
s^{-1}_\lambda s^{-\alpha}_\mu  |\sprod{  \eta_{[d-1]} , \nabla  b_{\mu} }| {\lesssim} s^{-1}_\lambda s^{-\alpha}_\mu \abspi{d_\sph(e_\lambda, e_\mu)} |\nabla  b_{\mu}|.
\]
The fact that $\partial_i \bfunc$, $i\in\{1,\ldots,d\}$, satisfy \eqref{eq:molcon} by assumption, and
that $s^{-1}_\lambda s^{-\alpha}_\mu \abspi{d_\sph(e_\lambda, e_\mu)} $ compensates $w_3$, implies that also the first component satisfies the required properties.

Let us turn to the last summand of \eqref{eq:lincomb}. The first factor $a_\lambda$ is of the desired form. For the second factor
we expand the function $\sprod{A^{-1}_{\alpha,s_\mu} \eta, \nabla}^2  b_{\mu}$ in the form
\[
s^{-2\alpha}_\mu \langle \eta_{[d-1]}, \nabla \rangle^2 b_{\mu}  + 2 s^{-1-\alpha}_\mu \eta_d \langle\eta_{[d-1]}, \nabla \rangle \partial_d b_{\mu} + s^{-2}_\mu \eta^2_d \partial^2_d b_{\mu}.
\]
Its partial derivatives of order $\rho\in\N_0^d$ with $|\rho|_1\le L-2$ clearly exist, and we get the estimate
\[
|\sprod{A^{-1}_{\alpha,s_\mu} \eta, \nabla}^2 \partial^\rho b_{\mu}| \lesssim s_0^{-2\alpha} |d_{\mathbb{S}}(e_\lambda,e_\mu)|^2 \sum_{i,j=1}^{d-1} |\partial_i\partial_j \partial^\rho b_{\mu}|  +
2 s_0^{-1-\alpha} |d_{\mathbb{S}}(e_\lambda,e_\mu)|  | \nabla \partial_d \partial^\rho b_{\mu}|  + s_0^{-2} |\partial^2_d \partial^\rho b_{\mu}|.
\]
Here we again used that $|\eta_{[d-1]}|\asymp \abspi{d_\sph(e_\lambda, e_\mu)}$ according to Lemma~\ref{lem:raygeo}. This estimate completes the proof, taking into account the estimate~\eqref{eq:prefactor} of the pre-factor and the fact that the partial derivatives of $b_{\mu}$ up to order $L$ satisfy \eqref{eq:molcon}.
\end{proof}

\subsection{Proof of Theorem~\ref{thm:almorth}}

At last we have all the tools available to prove Theorem~\ref{thm:almorth}.
Write $\Delta x = x_\mu - x_\lambda$. An application of the Plancherel identity yields
\begin{align*}
\sprod{m_\lambda, p_\mu} &= \sprod{\hat{m}_\lambda, \hat{p}_\mu} \\
&=  (s_\lambda s_\mu)^{-\frac{\alpha (d-1)+1}{2}} \int_{\R^d} \afuncfor(\matricol{\lambda}\xi) \overline{\bfuncfor(\matricol{\mu}\xi)} \exp(2 \pi i \sprod{\xi, \Delta x}) \, d\xi
\end{align*}
for two $\alpha$-molecules $m_\lambda$ and $p_\mu$ with respective generators $a_{\lambda}$ and $b_{\mu}$.
According to Lemma~\ref{alem:forParaEst} the Fourier transforms of the generators therefore satisfy \eqref{paraproprad} for every $\rho\in\N_0^d$ with $|\rho|_1 \leq L$

Next, we want to exploit cancellation. For this we utilize the
differential operator $\mathscr{L}_{\lambda,\mu}$ from \eqref{eq:diffop}. First, we observe that partial integration yields
\begin{align*}
	\Big\langle \mathscr{L}_{\lambda,\mu}^N & \exp(2 \pi i \sprod{\xi, \Delta x}),\afuncfor(\matricol{\lambda}\xi) \overline{\bfuncfor(\matricol{\mu}\xi)} \Big\rangle \\
	&= \Big\langle\exp(2 \pi i \sprod{\xi, \Delta x}),\mathscr{L}_{\lambda,\mu}^N\big(\afuncfor(\matricol{\lambda}\xi) \overline{\bfuncfor(\matricol{\mu}\xi)}\big) \Big\rangle,
\end{align*}
since the boundary terms vanish due to the decay properties~\eqref{eq:molcon} of the generators and its derivatives. Note that we assume $N_1>d/2$ and $L\ge2N$.
Second, we calculate for $\xi\in\R^d$
\begin{align*}
 \mathscr{L}_{\lambda,\mu}^N \big(\exp( 2\pi i \sprod{\xi, \Delta x})\big)
 = \left( 1 + 4\pi^2 s_0^{2\alpha} \abs{\Delta x}^2 + \frac{4\pi^2s_0^{2}\sprod{e_\lambda, \Delta x}^2}{1 + s_0^{2(1-\alpha)}\abspi{d_\sph(e_\lambda, e_\mu)}^2}\right)^N\exp(2 \pi i\sprod{\xi, \Delta x}).
 \end{align*}
Consequently, we have
 \begin{align*}
 \sprod{ m_\lambda, p_\mu}
 =  \left( 1 +4\pi^2 s_0^{2\alpha} \abs{\Delta x}^2
 + \frac{4\pi^2s_0^{2}\sprod{e_\lambda, \Delta x}^2}{1 + s_0^{2(1-\alpha)}\abspi{d_\sph(e_\lambda, e_\mu)}^2}\right)^{-N} \cdot\, \mathcal{S}_{\lambda,\mu},
 \end{align*}
 with
 \begin{align*}
 \mathcal{S}_{\lambda,\mu}:=  (s_\lambda s_\mu)^{-\frac{\alpha (d-1)+1}{2}}
 \int\limits_{\R^d}  \mathscr{L}_{\lambda,\mu}^N\big(\afuncfor(\matricol{\lambda}\xi) \overline{\bfuncfor(\matricol{\mu}\xi)}\big) \exp(2 \pi i \sprod{\xi, \Delta x}) \,d\xi.
 \end{align*}

 Since $L \geq 2N$ by assumption, Lemma~\ref{Lwirkung} can iteratively be applied $N$ times, and we conclude that
 \begin{align*}
 	\mathscr{L}_{\lambda,\mu}^N\big( \afuncfor(\matricol{\lambda}\xi)\overline{\bfuncfor(\matricol{\mu}\xi)}\big)
 \end{align*}
 can be written as a finite linear combination of terms of the form
 \[
 p_{\lambda}(\matricol{\lambda}\xi)q_{\mu}(\matricol{\mu}\xi)),
 \]
 where $p_{\lambda}$ and $q_{\mu}$ satisfy \eqref{eq:molcon} (for the multi-index just containing zeros).

 Using Lemma~\ref{alem:forParaEst} and putting $K=2N+d-2\le N_2$ in Lemma~\ref{zerlegungslemma} then yields
 \begin{align*}
 	\big|\mathscr{L}_{\lambda,\mu}^N\big( \afuncfor(\matricol{\lambda}\xi)&\overline{\bfuncfor(\matricol{\mu}\xi)}\big)\big|\\
 	&\lesssim S_{\lambda, M-(2N+d-2), N_1, 2N+d-2}(\xi)S_{\mu, M - (2N+d-2), N_1, 2N+d-2}(\xi).
 \end{align*}
 Due to the assumptions, we can further choose a number $\widetilde{N}\le N_1$ which satisfies
 	\begin{align}\label{eq:auxnumber}
 		(M - (2N +d-2))+d > \widetilde{N} \geq N + \frac{1+\alpha(d-1)}{2}.
 	\end{align}
 Since $\widetilde{N}\le N_1$ we have the estimate $ S_{\eta, M -(2N+d-2), N_1, 2N+d-2} \leq S_{\eta, M -(2N+d-2), \widetilde{N}, 2N+d-2}$ for $\eta=\lambda$, $\mu$.
 Hence, we obtain
 \begin{align*}
 |\mathcal{S}_{\lambda,\mu}|&\lesssim  (s_\lambda s_\mu)^{-\frac{\alpha (d-1)+1}{2}} \int_{\R^d}  S_{\lambda, M-(2N+d-2), N_1, 2N+d-2)}(\xi)S_{\mu, M - (2N+d-2), N_1, 2N+d-2}(\xi)  \,d\xi \\
    &\lesssim  (s_\lambda s_\mu)^{-\frac{\alpha (d-1)+1}{2}} \int_{\R^d}  S_{\lambda, M -(2N+d-2), \widetilde{N}, 2N+d-2}(\xi)S_{\mu, M -(2N+d-2), \widetilde{N}, 2N+d-2}(\xi)  \,d\xi \\
    &\lesssim  \max\left\{ \frac{s_\lambda}{s_\mu}, \frac{s_\mu}{s_\lambda} \right\}^{-N}(1 + s_0^{(1-\alpha)}\abspi{d_\sph(e_\lambda, e_\mu)})^{-2N}.
 \end{align*}
 Here we used \eqref{eq:auxnumber} and Lemma~\ref{intWholeEstimate} in the last line (using this $S$ and
 	setting $\tilde{M}= M-(2N+d-2)$,  $A = N$ and $B=2N$  ($B>1$, $A>0$ since $N>1$)).

  Altogether, we arrive at the desired estimate
 \begin{align*}
 |\sprod{m_\lambda, p_\mu}|&\lesssim \max\left\{ \frac{s_\lambda}{s_\mu}, \frac{s_\mu}{s_\lambda} \right\}^{-N} \left( 1 + s_0^{2\alpha} \abs{\Delta x}^2 + \frac{s_0^{2}\sprod{e_\lambda, \Delta x}^2}{1 + s_0^{2(1-\alpha)}\abspi{d_\sph(e_\lambda, e_\mu)}^2}\right)^{-N} \big(1 + s_0^{(1-\alpha)}\abspi{d_\sph(e_\lambda, e_\mu)}\big)^{-2N} \\
 	&\lesssim
 	 \max \left\{ \frac{s_\lambda}{s_\mu},  \frac{s_\mu}{s_\lambda} \right\}^{-N} \left( 1 + s_0^{2(1-\alpha)} \abspi{d_\sph(e_\lambda, e_\mu)}^2
 	+ s_0^{2\alpha} \abs{\Delta x}^2 + \frac{s_0^{2} \sprod{e_\lambda, \Delta x}^2}{1 + s_0^{2(1-\alpha}\abspi{d_\sph(e_\lambda, e_\mu)}^2} \right)^{-N} \\
  & \lesssim \omega_{\alpha}(\lambda, \mu)^{-N}.
 \end{align*}
 For the last estimate observe that the inequality between the arithmetic and the geometric mean
 \begin{align*}
 	\big( 1 + s_0^{2(1-\alpha)} \abspi{d_\sph(e_\lambda, e_\mu)}^2 \big) + \frac{s_0^{2}\sprod{e_\lambda, \Delta x}^2}{1 + s_0^{2(1-\alpha)}\abspi{d_\sph(e_\lambda, e_\mu)}^2}
 	\ge 2s_0|\langle e_\lambda,\Delta x\rangle|
 \end{align*}
 implies
 \begin{align*}
 	1 &+ s_0^{2(1-\alpha)} \abspi{d_\sph(e_\lambda, e_\mu)}^2 + s_0^{2\alpha} \abs{\Delta x}^2 + \frac{s_0^{2} \sprod{e_\lambda, \Delta x}^2}{1 + s_0^{2(1-\alpha)}\abspi{d_\sph(e_\lambda, e_\mu)}^2}
 	 \geq \\
 	  &\frac{1}{2} \left( 1 + s_0^{2(1-\alpha)} \abspi{d_\sph(e_\lambda, e_\mu)}^2 + s_0^{2\alpha}\abs{\Delta x}^2 \right)
 	  + \frac{1}{2} \left( 1 + s_0^{2(1-\alpha)} \abspi{d_\sph(e_\lambda, e_\mu)}^2 + \frac{s_0^{2}\sprod{e_\lambda, \Delta x}^2}{1 + s_0^{2(1-\alpha)}\abspi{d_\sph(e_\lambda, e_\mu)}^2} \right) \\
 	  & \quad \quad \quad \quad \quad \quad \gtrsim  1 +  s_0^{2(1-\alpha)}\abspi{d_\sph(e_\lambda, e_\mu)}^2 + s_0^{2\alpha}\abs{\Delta x}^2 + s_0\abs{\sprod{e_\lambda, \Delta x}}
     = 1 + d_\alpha(\lambda,\mu).
 \end{align*}
 This concludes the proof. \qed

\section*{Acknowledgements}

The first author acknowledges support by Deutsche
Forschungsgemeinschaft (DFG) Grant KU 1446/18~-~1 and by the Deutscher Akademischer Austausch Dienst (DAAD).
The second author would like to thank Anton Kolleck for enlightening discussions on this and related topics.
Moreover, both authors would like to thank Gitta Kutyniok for carefully proofreading the article and providing many suggestions for enhancing the readability of the paper.

\bibliographystyle{abbrv}
\bibliography{molecules1}

\end{document}